\documentclass[11pt]{article}
\usepackage{amssymb}
\usepackage{amsthm}
\usepackage{amsmath}
\usepackage{fullpage,hyperref,algorithmic}
\usepackage{graphicx,url,amsmath,amsthm,amsfonts,amssymb,subfigure,bbm}
\usepackage{pdfsync}

\newtheorem{theorem}{Theorem}[section]
\newtheorem{lemma}[theorem]{Lemma}
\newtheorem{prop}[theorem]{Proposition}

\newtheorem{cor}[theorem]{Corollary}

\newtheorem{definition}[theorem]{Definition}

\newtheorem{remark}{Remark}[section]

\newtheorem{conjecture}[theorem]{Conjecture}
\newtheorem{question}[theorem]{Question}

\newcommand{\pr}{\mathbb{P}}

\newcommand{\jnote}[1]{}

\newcommand{\E}{{\mathbb E}}

\newcommand{\Lip}{\mathrm{Lip}}

\newcommand{\remove}[1]{}
\newcommand{\1}{\mathbf{1}}

\newcommand{\e}{\varepsilon}

\newcommand{\f}{{\varphi}}

\newcommand*{\defeq}{\mathrel{\vcenter{\baselineskip0.5ex \lineskiplimit0pt
                     \hbox{\scriptsize.}\hbox{\scriptsize.}}}%
                     =}
\newcommand{\dimAN}{\mathrm{dim}_{AN}}

\def\P{\mathbb{P}}
\begin{document}

\title{{\bf Markov type  and threshold embeddings}}

\author{Jian Ding\thanks{Research partially supported by NSF grant DMS-1313596.} \\ University of Chicago \and James R. Lee\thanks{Research partially supported
by NSF grant CCF-0915251 and a Sloan Research Fellowship.} \\ University of Washington \and Yuval Peres \\ Microsoft Research}

\date{}

\maketitle

\begin{abstract}
For two metric spaces $X$ and $Y$, say that $X$ {\em threshold-embeds into $Y$}
if there exist a number $K > 0$ and a family of
Lipschitz maps $\{\varphi_{\tau} : X \to Y : \tau > 0 \}$
such that for every $x,y \in X$,
$$
d_X(x,y) \geq \tau \implies d_Y(\varphi_{\tau}(x),\varphi_{\tau}(y)) \geq \|\varphi_{\tau}\|_{\Lip} \tau/K\,,
$$
where $\|\varphi_{\tau}\|_{\Lip}$  denotes the Lipschitz constant of $\varphi_{\tau}$.
We show that if a metric space $X$ threshold-embeds into a Hilbert space, then $X$ has Markov type 2.
As a consequence, planar graph metrics
and doubling metrics have Markov type 2, answering questions of Naor, Peres, Schramm, and Sheffield.
More generally, if a metric space $X$ threshold-embeds into a $p$-uniformly smooth Banach space,
then $X$ has Markov type $p$.
Our results suggest some non-linear analogs of Kwapien's theorem.
For instance,
a subset $X \subseteq L_1$ threshold-embeds into Hilbert space if and only if $X$ has Markov type 2.
\end{abstract}

\section{Introduction}

We begin by recalling K. Ball's notion of Markov type \cite{Ball92}.

\begin{definition}
A metric space $(X,d)$ is said to have {\em Markov type $p \in [1,\infty)$} if there is
a constant $M > 0$ such that for every $n \in \mathbb N$, the following holds.
For every reversible
Markov chain $\{Z_t\}_{t=0}^{\infty}$ on $\{1,\ldots,n\}$, every mapping $f : \{1,\ldots,n\} \to X$,
and every time $t \in \mathbb N$,
$$
\E \,d(f(Z_t), f(Z_0))^p \leq M^p t \, \E \, d(f(Z_0), f(Z_1))^p\,,
$$
where $Z_0$ is distributed according to the stationary measure of the chain.
One denotes by $M_p(X)$ the infimal constant $M$ such that the inequality holds.
\end{definition}

This is intended as a metrical generalization of the concept of {\em linear (Rademacher) type,} which we discuss shortly. One of Ball's primary motivations was
in developing non-linear analog of Maurey's extension theorem for linear operators \cite{Maurey74}.
Toward this end, he proved the following.

\begin{theorem}[\cite{Ball92}]
\label{thm:ballext}
Let $(X,d)$ be a metric space and $Y$ a Banach space.  If $X$ has Markov type 2 and $Y$ is
$2$-uniformly convex, then there exists a constant $C=C(X,Y)$ such that
for every subset $S \subseteq X$ and Lipschitz mapping $f : S \to Y$, there exists an extension $\tilde f : X \to Y$
satisfying $\tilde f|_S = f$ and $\|\tilde f\|_{\Lip} \leq C \|f\|_{\Lip}$.
\end{theorem}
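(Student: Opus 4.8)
The plan is to derive Theorem~\ref{thm:ballext} by playing the assumed Markov type~$2$ of the source $X$ against a dual ``averaging'' property of the target $Y$ that $2$-uniform convexity forces, after first reducing to a finite $X$. For the reduction: we may assume $\|f\|_{\Lip}=1$, fix $s_0\in S$, and translate so that $f(s_0)=0$; a $2$-uniformly convex space is uniformly convex, hence reflexive by the Milman--Pettis theorem, so closed bounded subsets of $Y$ are weakly compact. Granting the theorem for \emph{finite} metric spaces with a constant depending only on $M_2$ and the convexity modulus, one recovers the general case by directing the finite sets $F\subseteq X$ with $s_0\in F$ by inclusion: for each such $F$ solve the problem on the subspace $F$ (which satisfies $M_2(F)\le M_2(X)$) with data $f|_{S\cap F}$ to obtain a $C$-Lipschitz $g_F\colon F\to Y$ agreeing with $f$ on $S\cap F$; since $\|g_F(x)\|\le C\,d_X(x,s_0)$, a pointwise weak ultralimit $\tilde f(x)=\lim_{\mathcal U}g_F(x)$ along an ultrafilter $\mathcal U$ containing every tail exists, lands in $Y$, is $C$-Lipschitz by weak lower semicontinuity of the norm, and extends $f$. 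So from now on $X=\{1,\dots,n\}$.

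The reason $2$-uniform convexity is the right hypothesis on the target is that it lets $Y$ absorb averaging along a Markov chain: I would first establish a dimension-free constant $N=N(Y)$ such that for every reversible chain on $\{1,\dots,n\}$ with transition matrix $A$ and stationary measure $\pi$, every $t\in\mathbb N$, and every $x\colon\{1,\dots,n\}\to Y$, there is a ``smoothed'' map $y\colon\{1,\dots,n\}\to Y$ with
\[
\sum_i\pi_i\|x_i-y_i\|^2 + t\sum_{i,j}\pi_i a_{ij}\|y_i-y_j\|^2 \;\le\; N^2\sum_{i,j}\pi_i(A^t)_{ij}\|x_i-x_j\|^2 .
\]
(This is a ``Markov cotype~$2$'' statement, already implicit in Ball's argument.) The natural candidate is $y=A^tx$, or a time-average of the iterates $A^sx$, $s\le t$; one then bounds the two left-hand terms entrywise using the quadratic modulus-of-convexity inequality of Ball--Carlen--Lieb, which promotes the two-point defect $\|\tfrac{a+b}2\|^2+\tfrac1{K^2}\|\tfrac{a-b}2\|^2\le\tfrac12(\|a\|^2+\|b\|^2)$ to arbitrary convex combinations. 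For $Y$ a Hilbert space the whole inequality is immediate from the spectral calculus of the self-adjoint operator $A$ on $L^2(\pi)$, and that case already yields the planar-graph and doubling corollaries. Getting a dimension-free $N$ for a general $2$-uniformly convex target is, I expect, the main analytic obstacle.

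With both quadratic inequalities in hand, I would construct $\tilde f$ by a multiscale scheme on $X$: for each dyadic scale $\tau$ form the natural reversible random walk at that scale (from $x$, move to $y$ with probability proportional to $d_X(x,y)^{-2}$ among the $y$ with $d_X(x,y)\le\tau$), together with the canonical couplings linking consecutive scales, and sweep through the scales starting from an arbitrary extension, at each scale replacing the current values by their $\pi$-weighted averages over nearby coarser points, correcting with the $Y$-smoothing map above (with $t$ chosen commensurate with the ratio of the two scales), and re-imposing the prescribed values on $S$. The Markov cotype~$2$ inequality bounds the correction made at scale $\tau$ by the one-step energy of that walk, which is of order $\tau^2$; Markov type~$2$ of $X$ --- applied to the relevant chain on $X$ through the identity map $X\to X$ --- is exactly what lets one sum these one-step contributions over all scales between $d_X(a,b)$ and $\mathrm{diam}(X)$, so that a telescoping argument gives $\|\tilde f(a)-\tilde f(b)\|\le C\,d_X(a,b)$ for all $a,b$, with $C$ depending only on $M_2(X)$ and $N(Y)$. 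The delicate point in this last step is arranging the scales so that a single chain, run for a suitably chosen time, carries the Markov type estimate cleanly.
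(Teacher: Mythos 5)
The paper does not prove Theorem~\ref{thm:ballext}; it is quoted verbatim from Ball~\cite{Ball92} and invoked later as a black box (e.g.\ in the proof of Lemma~\ref{lem:lambda}). So there is no in-paper argument to compare yours against, and the only question is whether your outline is itself a proof. It isn't: you correctly identify the two-ingredient strategy (Markov type~$2$ of the source against a dual ``Markov cotype~$2$'' of the target), and the ultrafilter/weak-compactness reduction to finite~$X$ is sound, but the central analytic lemma is asserted rather than proved.

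Specifically, the inequality
\begin{equation*}
\sum_i\pi_i\|x_i-y_i\|^2 + t\sum_{i,j}\pi_i a_{ij}\|y_i-y_j\|^2 \;\le\; N^2\sum_{i,j}\pi_i(A^t)_{ij}\|x_i-x_j\|^2
\end{equation*}
for a dimension-free $N=N(Y)$ and arbitrary reversible $A$ is exactly the hard half of Ball's program, and it does not follow ``entrywise'' from the Ball--Carlen--Lieb two-point parallelogram inequality applied to $y=A^t x$ or a Ces\`aro average. The obstruction is that $A^t$ averages against different weights at different sites, and the quadratic defect from uniform convexity must be summed against the chain without picking up a dimension- or $n$-dependent loss; in a Hilbert space this is indeed a spectral identity, but for a general $2$-uniformly convex target it is a genuine theorem, which you explicitly set aside as ``the main analytic obstacle.'' Acknowledging an obstacle is not the same as overcoming it, and this leaves the proposal a plan rather than a proof. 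The multiscale ``sweep'' in your last paragraph compounds the problem: the chain with transitions $\propto d_X(x,y)^{-2}$ truncated at radius~$\tau$, the choice of $t$ ``commensurate with the ratio of scales,'' and the claim that re-imposing $f$ on $S$ after each smoothing step does not destroy the accumulated Lipschitz gain are all stated at a level where one cannot check that the per-scale corrections telescope with a summable geometric factor. Until the Markov-cotype inequality is actually proved for $2$-uniformly convex~$Y$ and the iterative scheme is made quantitative, this does not establish Theorem~\ref{thm:ballext}.
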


Here, we use $\|f\|_{\Lip}$ to denote the infimal constant $L$ such that $f$ is $L$-Lipschitz.  The
preceding theorem is already very interesting in the case where $Y$ is a Hilbert space.
The notion of Markov type has since found a number of additional applications \cite{LMN02,BLMN05,MN06,MN12}.

Despite its apparent utility, only Hilbert spaces (and spaces which admit a bi-Lipschitz
embedding into Hilbert space) were known to have Markov type 2
until the work of \cite{NPSS06}.  The authors prove that
every $p$-uniformly smooth Banach has Markov type $p$.  Most significantly,
this implies that $L_p$ for $p > 2$ has Markov type 2, answering a fundamental open question.
Combined with Ball's work, this gives a non-linear analog
of Maurey's extension theorem in terms of uniform smoothness and
convexity of the underlying Banach spaces.
We refer to their work \cite{NPSS06} for an extended discussion.

They also establish that trees
and certain classes of Gromov hyperbolic spaces have Markov type 2.
The authors state their belief that planar graph metrics and doubling metrics
should have Markov type 2, but they are only able to show that such spaces
have Markov type $2-\varepsilon$ for every $\varepsilon > 0$.
Building on the method of \cite{NPSS06}, it can be shown that
all series-parallel graph metrics have Markov type 2 \cite{BKL07}.
Both the planar and doubling questions have recently been reiterated in the survey of Naor \cite{NaorRibe},
where the author remarks that even the special case of the
three-dimensional Heisenberg group $\mathbb H^3$ is open.
We resolve these questions and present a number of generalizations.
Our main tool in controlling Markov type is the use of embeddings that are weaker than bi-Lipschitz.

\medskip
\noindent
{\bf Threshold embeddings.}
We recall that for two metric spaces $(X,d_X)$ and $(Y,d_Y)$, a mapping
$f : X \to Y$ is called {\em bi-Lipschitz} if $f$ is invertible and
both $\|f\|_{\Lip}$ and $\|f^{-1}\|_{\Lip}$ are bounded.
The {\em distortion of $f$} is the quantity $\|f\|_{\Lip} \cdot \|f^{-1}\|_{\Lip}$.
It is straightforward that Markov type is a bi-Lipschitz invariant.
In fact, if there exists a map from $X$ into $Y$ with distortion $D$,
then manifestly, $M_p(X) \leq D \cdot M_p(Y)$.

Unfortunately, there are planar graph metrics \cite{Bourgain86,NR03,Laakso02} and doubling metrics \cite{Pansu89,Semmes96} (see
also \cite{LN06}) which do not admit any bi-Lipschitz embedding into a Hilbert space.  However,
such spaces are known to admit a weaker sort of embedding which we now recall.
Say that $X$ {\em threshold-embeds} into $Y$ if there exists a constant $K > 0$ and a family of
mappings $\{\f_{\tau} : X \to Y\}_{\tau > 0}$ such that the following holds:
For every $\tau > 0$, for every $x,y \in X$,
$$
d_X(x,y) \geq \tau \implies d_Y(\f_{\tau}(x),\f_{\tau}(y)) \geq \frac{\|\f_{\tau}\|_{\Lip}}{K} \tau\,.
$$
If we wish to emphasize the constant $K$, we will say that $X$ {\em $K$-threshold-embeds} into $Y$.
As opposed to bi-Lipschitz maps, threshold embeddings are only required to control one scale at a time.
We prove the following theorem.

\begin{theorem}\label{thm:hilbert}
If $X$ threshold-embeds into a Hilbert space, then $X$ has Markov type 2.  Quantitatively,
if $X$ admits a $K$-threshold-embedding, then $M_2(X) \leq O(K)$.
\end{theorem}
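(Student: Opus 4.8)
The plan is to push everything back to the fact that a Hilbert space has Markov type $2$ with an absolute constant — the baseline case of the theorem of \cite{NPSS06} quoted above — applied to the images of the chain under the threshold maps. Fix a reversible stationary chain $\{Z_s\}_{s\ge 0}$ on $\{1,\dots,n\}$, a map $f:\{1,\dots,n\}\to X$, a time $t$, and write $\delta^2=\E\, d_X(f(Z_0),f(Z_1))^2$, $R_s=d_X(f(Z_s),f(Z_0))$, and $\sigma_s=d_X(f(Z_{s-1}),f(Z_s))$. Rescaling each map in the threshold family, we may assume $\|\f_\tau\|_{\Lip}\le 1$, so that each $\f_\tau$ is $1$-Lipschitz and $d_X(x,y)\ge\tau\Rightarrow\|\f_\tau(x)-\f_\tau(y)\|\ge\tau/K$. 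For any scale $\tau$, $\f_\tau\circ f$ is a $1$-Lipschitz map into Hilbert space, so Markov type $2$ of Hilbert space gives $\E\|\f_\tau(f(Z_t))-\f_\tau(f(Z_0))\|^2\lesssim t\,\E\|\f_\tau(f(Z_0))-\f_\tau(f(Z_1))\|^2\le t\delta^2$; combined with the threshold inequality on $\{R_t\ge\tau\}$ this yields the weak-type bound $\P(R_t\ge\tau)\lesssim K^2 t\delta^2/\tau^2$. Integrating this alone over dyadic scales costs a logarithmic factor (the estimate is not summable after multiplication by $\tau^2$), so the crux is to eliminate that logarithm.

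To do so I would decompose the \emph{trajectory} of the chain along geometrically growing displacement scales, arranging that the time budget $t$ — rather than the unbounded number of scales — controls the sum. Put $\tau_j=2^j$ and let $T_j=\inf\{s\ge 0:R_s\ge\tau_j\}$, a stopping time nondecreasing in $j$; since $\{R_t\ge\tau_j\}\subseteq\{T_j\le t\}$, a dyadic partition of the range of $R_t$ gives $\E R_t^2\le 4\sum_j\tau_j^2\,\P(T_j\le t)$, where scales $\tau_j\lesssim\delta$ contribute only $O(\delta^2)$ and may be discarded. On $\{T_j\le t\}$ the triangle inequality yields $d_X(f(Z_{T_j}),f(Z_{T_{j-1}}))\ge\tau_j-\tau_{j-1}-(\text{overshoot at }T_{j-1})$, the overshoot being the size of the single step that last crossed $\tau_{j-1}$; a fixed step of size $\sigma$ can be the decisive crossing step only for $O(1)$ scales (those with $\tau_j\asymp\sigma$), so the aggregate of the large-overshoot events is $O\big(\sum_s\E\sigma_s^2\big)=O(t\delta^2)$. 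On the remaining part of $\{T_j\le t\}$ we have $d_X(f(Z_{T_j}),f(Z_{T_{j-1}}))\gtrsim\tau_j$, hence $\|\f_{c\tau_j}(f(Z_{T_j}))-\f_{c\tau_j}(f(Z_{T_{j-1}}))\|\gtrsim\tau_j/K$ for an appropriate constant $c$, and therefore
$$
\frac{\tau_j^2}{K^2}\,\P(T_j\le t,\ \text{good})\ \lesssim\ \E\big\|\f_{c\tau_j}(f(Z_{T_j\wedge t}))-\f_{c\tau_j}(f(Z_{T_{j-1}\wedge t}))\big\|^2 .
$$
Applying Markov type $2$ of Hilbert space \emph{between the stopping times} $T_{j-1}\wedge t\le T_j\wedge t$ to the $1$-Lipschitz map $\f_{c\tau_j}\circ f$ bounds the right-hand side by $O(\delta^2)\cdot\E\big[(T_j\wedge t)-(T_{j-1}\wedge t)\big]$. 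Summing over $j$, these times telescope to $O(\delta^2)\cdot t$, and together with the overshoot estimate this gives $\E R_t^2=O(K^2)\,t\,\delta^2$, i.e. $M_2(X)=O(K)$.

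The step I expect to be the main obstacle is the last one: upgrading Markov type $2$ of a Hilbert space to the ``two stopping times'' form $\E\|g(Z_\rho)-g(Z_\sigma)\|^2\lesssim\E[\rho-\sigma]\cdot\E\|g(Z_0)-g(Z_1)\|^2$ for a $1$-Lipschitz $g$ into Hilbert space and stopping times $\sigma\le\rho\le t$. Writing $g(Z_\rho)-g(Z_\sigma)=\sum_s\1\{\sigma<s\le\rho\}\,(g(Z_s)-g(Z_{s-1}))$ and expanding the norm squared, the indicators are previsible, so the diagonal terms are $\E[\rho-\sigma]$ times a weighted average of local second moments; the delicate points are (i) controlling the cross terms, which need not vanish since $g$ is not harmonic, and (ii) ruling out that the (trajectory-dependent) stopping window concentrates on states of anomalously large local oscillation. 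I anticipate this being handled either by an optional-stopping argument adapted to reversible chains, or by reducing to the fixed-time statement via concatenation over a deterministic multiresolution partition of $[0,t]$. The remaining ingredients — the dyadic trajectory decomposition, the overshoot bookkeeping, and the telescoping — are routine once this is in place. Running the same scheme with Hilbert space replaced by a $p$-uniformly smooth Banach target and the exponent $2$ by $p$ should yield the asserted generalization that a threshold embedding into a $p$-uniformly smooth space forces Markov type $p$.
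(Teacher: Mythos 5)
Your opening steps are on target: the per-scale calculation $\P(R_t\ge\tau)\lesssim K^2 t\delta^2/\tau^2$ is exactly what the threshold inequality plus Hilbert-space Markov type $2$ gives, and you are right that summing $\tau_j^2\,\P(R_t\ge\tau_j)$ over dyadic $\tau_j$ does not close (it is the crux of the whole problem, and is what separates Markov type $2$ from the ``weak Markov type $2$'' already established in \cite{NPSS06}). Where the proposal breaks down is precisely at the lemma you flag as ``the main obstacle,'' and that obstacle is not a technicality to be cleaned up afterward: a ``two stopping times'' Markov type $2$ inequality for $1$-Lipschitz Hilbert-valued compositions of a reversible chain is not a known black box, and it does not follow from the fixed-time statement by optional stopping. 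The difficulty is exactly the one you call (ii). In the NPSS forward/backward decomposition one has only the \emph{unconditional} bound $\E\|M_s-M_{s-1}\|^2\le\delta^2$ on increments; the optional-stopping computation $\E\|M_\rho-M_\sigma\|^2=\sum_s\E\bigl[\mathbf 1\{\sigma<s\le\rho\}\,\E[\|M_s-M_{s-1}\|^2\mid\mathcal F_{s-1}]\bigr]$ only yields $\lesssim\delta^2\,\E[\rho-\sigma]$ if one has the corresponding \emph{conditional} bound, which is not available. Worse, the backward martingale piece of $g(Z_t)-g(Z_0)$ is adapted to the reverse filtration, with respect to which $T_{j-1}\wedge t$ and $T_j\wedge t$ are not stopping times, so the telescoping-between-stopping-times strategy does not even compose with the decomposition that makes the fixed-time bound work. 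Neither of your two suggested fixes (``optional-stopping adapted to reversible chains,'' ``concatenation over a deterministic multiresolution partition of $[0,t]$'') addresses this; the second, in particular, brings you back to a union bound over scales and the log you set out to kill.

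The paper avoids stopping times entirely. It applies the forward/backward decomposition at the \emph{fixed} time $t$ to each composed map $\f_{2^j}\circ g$, producing martingales $\{A^{(j)}_s\},\{B^{(j)}_s\}$ indexed by the scale $j$. The decisive observation is that, for all $j$ simultaneously, these martingales are adapted to the same pair of filtrations, and because each $\f_{2^j}$ is $1$-Lipschitz, their increments are dominated by a single scale-independent adapted sequence $\alpha_s$ built from the jumps $d(g(Z_{s}),g(Z_{s-1}))$. The paper then proves a distributional (good-$\lambda$) inequality (Lemma~\ref{lem-Osekowski}, via Pisier's martingale inequality for $p$-uniformly smooth spaces) and deduces the uniform tail bound of Lemma~\ref{cor-integrate-tail}: $\int_0^\infty y^{p-1}\sup_j\P(\|A^{(j)}_u\|>y)\,dy\lesssim\sum_s\E\alpha_s^p$. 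Plugging this into the dyadic sum, the $\sup_j$ sitting \emph{inside} the $y$-integral replaces what would otherwise be a sum over scales, and that is what removes the logarithm. So the high-level shape of your argument (threshold maps, dyadic scales, naive log loss) matches the paper's starting point, but the mechanism that saves the log is different, and your central lemma is genuinely open; if you wanted to prove it, you would likely end up re-deriving something equivalent to the paper's uniform good-$\lambda$ estimate, since the unconditional-to-conditional gap is the same in both formulations.
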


Using the known constructions of threshold embeddings for various spaces (see Section \ref{sec:embeddings}),
we confirm that a number of spaces have Markov type 2.

\begin{theorem}\label{thm:planar}
If $(X,d)$ is the shortest-path metric on a weighted planar graph, then $(X,d)$ has Markov type 2.
More generally, this holds for the shortest-path metric on any surface of bounded genus.
Quantitatively, if $X$ is the shortest-path metric on a graph of orientable genus $g > 1$, then
$M_2(X) \leq O(\log g)$.
\end{theorem}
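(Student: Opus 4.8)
The plan is to derive Theorem~\ref{thm:planar} from Theorem~\ref{thm:hilbert}: it suffices to produce, for each such $X$, a $K$-threshold embedding into a Hilbert space with $K = O(1)$ in the planar (indeed $H$-minor-free) case and $K = O(\log g)$ in the genus-$g$ case. Because Markov type is witnessed by chains on finitely many states---so $f$ always lands in a finite subset of $X$---and because the modulus of padded decomposability of a metric space is inherited, up to a constant, by its submetrics, I may and do assume that $X$ is finite, say $|X| = n$.

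Recall that $X$ has \emph{modulus of padded decomposability $\beta$} if for every $\tau > 0$ there is a random partition $P$ of $X$ into clusters of diameter $< \tau$ such that $\pr\bigl[B(x,\tau/(4\beta)) \subseteq P(x)\bigr] \geq \tfrac34$ for all $x$, where $P(x)$ denotes the cluster containing $x$. The constructions recalled in Section~\ref{sec:embeddings} give $\beta = O(1)$ for planar and, more generally, $H$-minor-free graph metrics, and $\beta = O(\log g)$ for graph metrics of orientable genus $g$. Fixing a scale $\tau$, I would draw $m$ independent such partitions $P_1,\dots,P_m$, attach to the clusters of each $P_i$ independent uniform signs $\{\sigma^{(i)}_C\}$, and set $\f_\tau : X \to \ell_2^m$,
\[
\f_\tau(x) \;=\; \Bigl( \sigma^{(i)}_{P_i(x)} \cdot \min\bigl(d(x, X \setminus P_i(x)), \tau\bigr) \Bigr)_{i=1}^{m}\,.
\]
Within a fixed cluster $C$ the function $x \mapsto \min(d(x, X\setminus C),\tau)$ is $1$-Lipschitz and the attached sign is constant, while for $x,y$ in different clusters the magnitude of $\f_\tau(x)$ and of $\f_\tau(y)$ in coordinate $i$ is at most $d(x,y)$; hence every coordinate is $2$-Lipschitz and $\|\f_\tau\|_{\Lip} \leq 2\sqrt m$.

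Now fix $x,y$ with $d(x,y) \geq \tau$. As every cluster has diameter $< \tau$, the two points lie in distinct clusters of each $P_i$. Conditioned on $P_i$, the signs $\sigma^{(i)}_{P_i(x)}$ and $\sigma^{(i)}_{P_i(y)}$ are independent, hence opposite with probability $\tfrac12$; intersecting with the event---of probability $\geq \tfrac12$ by a union bound---that $B(x,\tau/(4\beta)) \subseteq P_i(x)$ and $B(y,\tau/(4\beta)) \subseteq P_i(y)$, one finds that coordinate $i$ of $\f_\tau(x) - \f_\tau(y)$ has absolute value at least $\tau/(2\beta)$ with probability at least $\tfrac14$, independently across $i$. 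A Chernoff bound then gives $\|\f_\tau(x) - \f_\tau(y)\|_2 \geq \sqrt{m/8}\,\cdot\,\tau/(2\beta)$ except with probability $e^{-\Omega(m)}$. Taking $m = \Theta(\log n)$ and a union bound over the fewer than $n^2$ relevant pairs produces a \emph{single} deterministic map $\f_\tau$ with $\|\f_\tau\|_{\Lip} \leq 2\sqrt m$ and $\|\f_\tau(x)-\f_\tau(y)\|_2 \geq \Omega(\sqrt m\, \tau/\beta) = \Omega(1/\beta)\cdot \|\f_\tau\|_{\Lip}\,\tau$ whenever $d(x,y)\geq\tau$. Thus $X$ admits an $O(\beta)$-threshold embedding into $\ell_2$, and Theorem~\ref{thm:hilbert} yields $M_2(X) = O(\beta)$: this is $O(1)$ for planar and, more generally, bounded-genus graphs, and $O(\log g)$ in genus $g>1$.

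The feature that makes this work---and the reason a threshold embedding succeeds where a bi-Lipschitz one cannot---is that the number $m$ of partitions one must superpose to control all $\binom n2$ pairs grows with $n$, but harmlessly: it scales the Lipschitz constant and the pairwise separations by the \emph{same} factor $\sqrt m$, which cancels in the threshold ratio, leaving only the padding constant $\beta$. A bi-Lipschitz embedding would instead have to absorb a surviving $\sqrt{\log n}$ into its distortion---precisely the Bourgain/Newman--Rabinovich obstruction for planar metrics. The only external ingredient is the existence of the padded decompositions with the claimed moduli, which I take from Section~\ref{sec:embeddings}; the one estimate inside the argument that will require a moment's care is the Lipschitz bound across cluster boundaries, where the attached sign jumps but the amplitude it multiplies vanishes.
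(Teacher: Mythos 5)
Your proof is correct, and it reaches the same two ingredients the paper uses---padded random partitions plus Theorem~\ref{thm:hilbert}---but the embedding you build out of the partitions is genuinely different from the one in Theorem~\ref{thm:partitions}. The paper takes a \emph{single} padded random partition $\mathcal P$, attaches independent Bernoulli signs to its clusters, and defines $\f_\tau(x) = \sigma_{\mathcal P(x)} \cdot d(x, X\setminus\mathcal P(x))$ as a map into $L_2(\Omega',\P)$, the Hilbert space of square-integrable functions on the underlying probability space; the $1$-Lipschitz estimate is a small variance computation, the separation is a two-line event calculation, and---crucially---there is no union bound, no Chernoff, and no reduction to finite $X$, since one never needs the target to be finite-dimensional. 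You instead first reduce to a finite $n$-point subset (legitimately, since Markov type is witnessed on finite chains and padded decomposability passes to submetrics), superpose $m=\Theta(\log n)$ independent partitions to obtain a random map into $\ell_2^m$, and derandomize by Chernoff plus a union bound over the $<n^2$ far pairs. Both routes give $K = O(\beta)$ and hence $M_2(X) = O(\beta)$, so $O(1)$ for planar/minor-closed and $O(\log g)$ for genus $g$. What your route buys is an explicit finite-dimensional embedding and the instructive observation that the $\sqrt{m}$ inflation of the Lipschitz constant cancels against the $\sqrt{m}$ gain in separation, which cleanly isolates why threshold embeddings evade the Bourgain/Newman--Rabinovich lower bound that blocks bi-Lipschitz embeddings. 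What the paper's route buys is brevity and generality: it never leaves the infinite setting and never invokes concentration.
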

In Section \ref{sec:embeddings}, we show that this theorem generalizes even further, to any
non-trivial minor-closed family of graphs.

\medskip

We recall that a metric space $(X,d)$ is said to be
{\em doubling with constant $\lambda$} if every bounded set in $X$ can be covered by $\lambda$
sets of half the diameter.  A space that is doubling with some constant $\lambda < \infty$
is said to be {\em doubling.}

\begin{theorem}\label{thm:doubling}
Every doubling metric space has Markov type 2.  Quantitatively, if $X$ is $\lambda$-doubling, then
$M_2(X) \leq O(\log \lambda)$.
\end{theorem}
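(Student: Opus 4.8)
\smallskip
\noindent\textbf{Proof plan for Theorem~\ref{thm:doubling}.}
By Theorem~\ref{thm:hilbert} it is enough to show that a $\lambda$-doubling metric space $(X,d)$ admits a $K$-threshold-embedding into a Hilbert space with $K = O(\log\lambda)$; the estimate $M_2(X) \le O(K) = O(\log\lambda)$ then follows immediately. The plan is to build each $\varphi_\tau$ out of a \emph{single scale} of a random padded decomposition of $X$. This is precisely the point where the weaker ``one scale at a time'' requirement of a threshold embedding pays off: doubling metrics need not embed bi-Lipschitzly into Hilbert space at all (e.g.\ the Heisenberg group, or the Laakso-type examples), yet a scale-by-scale embedding with the right quantitative loss does exist.

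Fix $\tau>0$ and set $\Delta = \tau/2$. Recall the Calinescu--Karloff--Rabani / Gupta--Krauthgamer--Lee padded decomposition (also recalled in Section~\ref{sec:embeddings}): a $\lambda$-doubling space carries a random partition $P$ into clusters of diameter at most $\Delta$ such that, writing $P(x)$ for the cluster of $x$,
\[
\pr\big[\, B(x,\rho\Delta)\subseteq P(x)\,\big] \ \ge\ \tfrac12 \qquad\text{for all } x\in X,
\]
where $\rho = \Omega(1/\log\lambda)$. Given a realization $\omega$ of $P$ with cluster set $\{C\}$, define $F_\omega : X \to \ell_2(\{C\})$ by letting the $C$-coordinate of $F_\omega(x)$ be $d(x,X\setminus C)$ when $x\in C$ and $0$ otherwise; thus $F_\omega(x)$ has the single nonzero coordinate $d(x,X\setminus P(x))$ sitting in position $P(x)$. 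If $P(x)=P(y)=:C$ then $\|F_\omega(x)-F_\omega(y)\| = \big|d(x,X\setminus C)-d(y,X\setminus C)\big| \le d(x,y)$; and if $P(x)=C\ne C'=P(y)$ then $y\notin C$ and $x\notin C'$, so $\|F_\omega(x)-F_\omega(y)\|^2 = d(x,X\setminus C)^2 + d(y,X\setminus C')^2 \le 2\,d(x,y)^2$. Hence $F_\omega$ is $\sqrt2$-Lipschitz, for every $\omega$.

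Now let $(\Omega,\pr)$ be the probability space of the decomposition and define $\varphi_\tau : X \to L_2(\Omega;\ell_2)$ by $\varphi_\tau(x)(\omega) = F_\omega(x)$; the target is again a Hilbert space and $\|\varphi_\tau(x)-\varphi_\tau(y)\|^2 = \E_\omega\|F_\omega(x)-F_\omega(y)\|^2$, so $\|\varphi_\tau\|_{\Lip}\le\sqrt2$. Suppose $d(x,y)\ge\tau = 2\Delta$. Since every cluster has diameter at most $\Delta$, we have $P(x)\ne P(y)$ surely; and on the event $B(x,\rho\Delta)\subseteq P(x)$, which has probability at least $\tfrac12$, we get $d(x,X\setminus P(x)) \ge \rho\Delta = \rho\tau/2$. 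Therefore
\[
\|\varphi_\tau(x)-\varphi_\tau(y)\|^2 \ \ge\ \tfrac12\big(\tfrac{\rho\tau}{2}\big)^2,
\]
so $\|\varphi_\tau(x)-\varphi_\tau(y)\| \ge \rho\tau/(2\sqrt2) \ge \|\varphi_\tau\|_{\Lip}\,\tau/K$ with $K = 4/\rho = O(\log\lambda)$. Feeding this threshold-embedding into Theorem~\ref{thm:hilbert} gives $M_2(X)\le O(\log\lambda)$.

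The only genuine content is the padded-decomposition bound $\rho = \Omega(1/\log\lambda)$ for doubling spaces; everything else is bookkeeping, and this is also where any attempt to sharpen the constant must focus. One can avoid random partitions altogether by using a localized Fr\'echet-type map $x\mapsto (\max\{0,\Delta-d(x,n)\})_{n\in N}$ for $N$ a $(\tau/2)$-net, but the Lipschitz constant of such a map is controlled only by the number of net points in a ball of radius $\sim\tau$, i.e.\ by a fixed power of $\lambda$; that route yields merely $M_2(X)\le\lambda^{O(1)}$, so the logarithmic dependence really does require the padding estimate.
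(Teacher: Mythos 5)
Your proof is correct and follows essentially the same route as the paper: reduce to the existence of padded random decompositions of doubling spaces (from \cite{GKL03}, giving padding parameter $\Omega(1/\log\lambda)$), assemble a scale-by-scale threshold embedding into Hilbert space as in Theorem~\ref{thm:partitions}, and invoke Theorem~\ref{thm:hilbert}. The only cosmetic difference is in the realization of the embedding map: the paper uses the scalar $\varphi_\tau(x) = \sigma_{\mathcal P(x)}\, d(x,X\setminus\mathcal P(x))$ with auxiliary Bernoulli labels $\sigma_S$ to decorrelate the padding contributions of $x$ and $y$, whereas you place those contributions in orthogonal coordinates of $\ell_2(\{C\})$, achieving the same decorrelation without the Bernoulli randomization at the cost of a harmless factor $\sqrt{2}$ in the Lipschitz constant.
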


Theorem \ref{thm:doubling} can be generalized; to this end, we now define the {\em Assouad-Nagata dimension}
of a metric space $(X,d)$.  This quantity, denoted $\dimAN(X,d)$, is the least integer $n$ such that
the following holds:  There exists a constant $c > 0$ so that for every number $r > 0$,
there is a cover $X \subseteq \bigcup_{i=1}^{\infty} U_i$ of $X$ such that each set $U_i$
has $\mathrm{diam}(U_i) \leq cr$ and every ball of radius $r$ in $X$ has non-trivial intersection
with at most $n+1$ elements of $\{U_i\}_{i=1}^{\infty}$.  Spaces of bounded Assouad-Nagata dimension
include trees, Gromov hyperbolic groups, manifolds of pinched negative sectional curvature,
Euclidean buildings, and homogeneous Hadamard manifolds (see \cite{LT05}).
In Section \ref{sec:embeddings}, we prove the following.

\begin{theorem}\label{thm:AN}
If $(X,d)$ is a metric space with finite Assouad-Nagata dimension, then $X$ has Markov type 2.
\end{theorem}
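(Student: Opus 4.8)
\medskip
\noindent\textbf{Proof plan.}
The plan is to deduce Theorem~\ref{thm:AN} from Theorem~\ref{thm:hilbert} by exhibiting, for a space of finite Assouad--Nagata dimension, a $K$-threshold-embedding into a Hilbert space with $K$ depending only on $\dimAN(X)$ (and the constant $c$ appearing in its definition). Since $M_2(X) = \sup\{M_2(S) : S\subseteq X\text{ finite}\}$, and $\dimAN$ does not increase under passing to a subspace (intersect a good cover of $X$ with $S$: diameters only shrink and multiplicity cannot grow), I may and will assume $X$ is finite, with $|X|=n$ and $\mathrm{diam}(X)=D<\infty$. It then suffices to produce, for each $\tau>0$, a map $\f_\tau : X\to\ell_2$ with $\|\f_\tau\|_{\Lip}=O(1)$ that expands every pair at distance $\ge\tau$ by at least $\tau/K$.

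The first step is to record the essentially standard fact that bounded Assouad--Nagata dimension furnishes, at every scale $s>0$, a \emph{padded random partition} $P$ of $X$: a random partition into clusters of diameter $\le cs$ such that $\mathbb{P}[B(x,\gamma s)\subseteq P(x)]\ge\tfrac12$ for every $x\in X$, where $\gamma=\gamma(\dimAN(X))>0$ depends only on the dimension and not on $s$ or $n$. Starting from the bounded-multiplicity cover supplied at scale $s$ by the definition of $\dimAN$, one builds such a $P$ by the usual randomized ball-carving, the padding $\gamma$ degrading only polynomially in $\dimAN(X)+1$; I would carry this out in Section~\ref{sec:embeddings}. For a cluster $C$, write $f_C(z)\defeq d(z,X\setminus C)$, which is globally $1$-Lipschitz and vanishes off $C$, and note the elementary fact that $f_{P(x)}(x)\le d(x,y)$ whenever $y\notin P(x)$.

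With this in hand, fix $\tau>0$, set $s\defeq\tau/(2c)$ so that every cluster has diameter $<\tau$, draw i.i.d.\ partitions $P_1,\dots,P_N$ at scale $s$, and put
\[
\f_\tau(x)_{(k,C)} \defeq \tfrac{1}{\sqrt N}\,f_{P_k(x)}(x)\,\1[C=P_k(x)]\,,\qquad k\in[N],\ C\text{ a cluster.}
\]
A block-by-block comparison gives $\|\f_\tau(x)-\f_\tau(y)\|_2^2\le 2\,d(x,y)^2$ for \emph{all} $x,y$ (when $P_k(x)=P_k(y)$ the $k$-th block contributes $\le\tfrac1N d(x,y)^2$; when $P_k(x)\neq P_k(y)$ it contributes $\tfrac1N\bigl(f_{P_k(x)}(x)^2+f_{P_k(y)}(y)^2\bigr)\le\tfrac2N d(x,y)^2$ by the elementary fact), so $\|\f_\tau\|_{\Lip}\le\sqrt2$ deterministically. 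If $d(x,y)\ge\tau$ then $x,y$ lie in different clusters of every $P_k$, whence
\[
\E\,\|\f_\tau(x)-\f_\tau(y)\|_2^2 \;\ge\; \E\bigl[f_{P(x)}(x)^2\bigr] \;\ge\; \tfrac12(\gamma s)^2\,.
\]
Each of the $N$ i.i.d.\ block-contributions lies in $[0,2D^2]$ with mean $\ge\tfrac12(\gamma s)^2$, so Hoeffding's inequality and a union bound over the $\le\binom n2$ pairs at distance $\ge\tau$ show that, for $N$ large enough (depending harmlessly on $n,D,\gamma,\tau$), some outcome satisfies $\|\f_\tau(x)-\f_\tau(y)\|_2\ge\tfrac12\gamma s=\tfrac{\gamma}{4c}\tau$ for all such pairs. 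Fixing it, $\f_\tau$ witnesses a $K$-threshold-embedding with $K=O(c/\gamma)$, and Theorem~\ref{thm:hilbert} gives $M_2(X)\le O(K)<\infty$.

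The main obstacle is the middle step: upgrading the bare multiplicity condition in the definition of $\dimAN$ to padded random partitions whose padding constant $\gamma$ is controlled by $\dimAN(X)$ alone, uniformly in the scale $s$ and in $|X|$. Once that is settled, the embedding and the concentration argument are routine, and they make the quantitative dependence $M_2(X)\lesssim c/\gamma(\dimAN X)$ transparent (and consistent, in the doubling case, with the bound $M_2(X)\le O(\log\lambda)$ of Theorem~\ref{thm:doubling}).
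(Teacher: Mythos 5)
Your proposal is correct and follows the same route as the paper: extract, from finite Assouad--Nagata dimension, a scale-uniform padded random partition (the paper cites \cite{NS11} for this; you propose to rederive it via ball-carving), feed it into a Rao-type partition embedding to get a threshold embedding into Hilbert space, and then invoke Theorem~\ref{thm:hilbert}. The only differences from the paper's Theorem~\ref{thm:partitions} are cosmetic and both valid: you index coordinates by the cluster containing the point (so different clusters land in orthogonal coordinates) instead of multiplying by i.i.d.\ Bernoulli signs $\sigma_{\mathcal P(x)}$, and you tack on a reduction to finite subspaces plus an $N$-fold sampling with Hoeffding concentration to produce a deterministic finite-dimensional map---both unnecessary, since the paper works directly with the (possibly infinite-dimensional) space $L_2(\Omega',\P)$ and Theorem~\ref{thm:hilbert} accepts any Hilbert-space target. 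The step you flag as the main obstacle (deriving padded random partitions with padding controlled by $\dimAN$ alone) is precisely the ingredient the paper also treats as a black box via \cite{NS11}, so you are not missing anything the paper itself supplies.
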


\medskip
\noindent
{\bf Uniformly smooth Banach spaces.}
The modulus of uniform smoothness of a Banach space $X$ is defined, for $\e > 0$, as
$$
\rho_X(\e) = \sup \left\{ \frac{\|x+\e y\| + \|x-\e y\|}{2} - 1 : x,y \in X, \|x\|=\|y\|=1 \right\}.
$$
The space $X$ is called {\em uniformly smooth} if $\lim_{\e \to 0} \frac{\rho_X(\e)}{\e} = 0$.
Furthermore, $X$ is said to be {\em $p$-uniformly smooth} if there is a constant $S > 0$ such that
$\rho_X(\e) \leq S^p \e^{p}$ for all $\e > 0$.
We use $S_p(X)$ to denote the infimal constant $S$ for which this holds.
It can be verified that a Banach space can only be $p$-uniformly smooth for $p \leq 2$.
Furthermore, $L_p$ is $p$-uniformly smooth for $1 \leq p \leq 2$ and $2$-uniformly smooth
for $p \geq 2$ \cite{Hanner56}; see also \cite[App. A]{BL00}.
We extend Theorem \ref{thm:hilbert} to spaces which threshold-embed into $p$-uniformly smooth
Banach spaces.

\begin{theorem}\label{thm:psmooth}
If a metric space $(X,d)$ threshold-embeds into a $p$-uniformly smooth Banach space then
$X$ has Markov type $p$.  Quantitatively if $X$ admits a $K$-threshold-embedding into a $p$-uniformly
smooth space $Y$, then $M_p(X) \leq O(K S_p(Y))$.
\end{theorem}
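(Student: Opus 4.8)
The plan is to reduce Theorem~\ref{thm:psmooth} to Theorem~\ref{thm:hilbert} (which we are allowed to assume, or rather to prove its generalization directly by the same argument). The essential point is that Theorem~\ref{thm:hilbert} is really a statement about threshold-embeddings into \emph{any} target with good smoothness, and the proof of Markov type $p$ for $p$-uniformly smooth Banach spaces from \cite{NPSS06} is robust enough to tolerate the ``one scale at a time'' nature of a threshold embedding. So I would first recall the heart of the Naor--Peres--Schramm--Sheffield argument: for a reversible Markov chain $\{Z_t\}$ on $\{1,\dots,n\}$ with stationary measure $\pi$ and a map $g$ into a $p$-uniformly smooth space $Y$, the quantity $\E\,\|g(Z_t)-g(Z_0)\|^p$ can be controlled by $O(S_p(Y)^p\, t)\cdot \E\,\|g(Z_1)-g(Z_0)\|^p$, and crucially this bound follows from the smoothness inequality applied along a random walk together with a decomposition of the walk of length $t$ into $t$ one-step increments (using reversibility and stationarity to symmetrize). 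I will want the version of this that keeps track of the Lipschitz constant, i.e., that also gives $\E\,\|g(Z_1)-g(Z_0)\|^p \le \|g\|_{\Lip}^p\,\E\,d(Z_1,Z_0)^p$ trivially.

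Second, given a $K$-threshold-embedding $\{\varphi_\tau : X \to Y\}_{\tau>0}$, fix the Markov chain and the map $f : \{1,\dots,n\} \to X$, and fix a time $t$. The idea is to apply the threshold embedding at the scale $\tau$ roughly equal to the displacement we are trying to bound, namely something like $\tau \sim (\E\,d(f(Z_t),f(Z_0))^p)^{1/p}$, or more robustly to dyadically decompose over scales $\tau = 2^j$. For a fixed scale $\tau$, set $g = \varphi_\tau \circ f : \{1,\dots,n\} \to Y$. Applying the NPSS bound to $g$ gives
\[
\E\,\|g(Z_t)-g(Z_0)\|_Y^p \;\le\; O\!\left(S_p(Y)^p\, t\right)\, \E\,\|g(Z_1)-g(Z_0)\|_Y^p \;\le\; O\!\left(S_p(Y)^p\, t\right)\, \|\varphi_\tau\|_{\Lip}^p\, \E\,d_X(f(Z_1),f(Z_0))^p .
\]
On the other hand, the threshold lower bound says that whenever $d_X(f(Z_t),f(Z_0)) \ge \tau$ we have $\|g(Z_t)-g(Z_0)\|_Y \ge \|\varphi_\tau\|_{\Lip}\,\tau/K$, so
\[
\|\varphi_\tau\|_{\Lip}^p\, \frac{\tau^p}{K^p}\, \pr\!\left[d_X(f(Z_t),f(Z_0)) \ge \tau\right] \;\le\; \E\,\|g(Z_t)-g(Z_0)\|_Y^p .
\]
The factors of $\|\varphi_\tau\|_{\Lip}^p$ cancel, which is exactly why a threshold embedding (rather than a bi-Lipschitz one) suffices. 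Combining, for every $\tau > 0$,
\[
\tau^p\, \pr\!\left[d_X(f(Z_t),f(Z_0)) \ge \tau\right] \;\le\; O\!\left((K\,S_p(Y))^p\, t\right)\, \E\,d_X(f(Z_1),f(Z_0))^p .
\]

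Third, I integrate this tail bound over $\tau$. Writing $D = d_X(f(Z_t),f(Z_0))$, we have $\E\,D^p = \int_0^\infty p\,\tau^{p-1}\,\pr[D \ge \tau]\,d\tau$; the displayed inequality controls $\pr[D\ge\tau]$ only up to the constant $1$, so the naive integral diverges. The standard fix is to localize: split at $\tau_0 := \big(O((KS_p(Y))^p t)\,\E\,d_X(f(Z_1),f(Z_0))^p\big)^{1/p}$, bound $\pr[D\ge\tau] \le 1$ for $\tau \le \tau_0$ and by $\tau_0^p/\tau^p$ for... no — that still diverges logarithmically. The cleaner route, and the one I expect to use, is to avoid the weak-type loss entirely: instead of the crude ``$D\ge\tau \implies$'' step, note that after cancelling $\|\varphi_\tau\|_{\Lip}$ the threshold map gives, on the event $D \ge \tau$, a genuine \emph{pointwise} lower bound $\|g(Z_t)-g(Z_0)\|_Y^p \ge (\tau/K)^p$, and on the complement we simply use $\|g(Z_t)-g(Z_0)\|_Y \ge 0$; but we actually want a bound valid for all realizations of $D$ simultaneously, so the right move is to choose, for each pair, the scale adaptively — or, most simply, to run the argument at the single scale $\tau = D_{\mathrm{typ}}$ and then upgrade using the $L_p$-regularity of the walk. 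Concretely I will use the by-now-standard trick (as in \cite{NPSS06}) of applying the one-scale estimate with $\tau$ a dyadic parameter, summing a geometric series in $j$, and absorbing the resulting $O(1)$-factor; the $\|\varphi_\tau\|_{\Lip}$-cancellation means the constant is uniform in $j$, so the geometric summation costs only a universal constant. This yields $\E\,d_X(f(Z_t),f(Z_0))^p \le O\!\left((K\,S_p(Y))^p\right) t\, \E\,d_X(f(Z_1),f(Z_0))^p$, i.e. $M_p(X) \le O(K\,S_p(Y))$.

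I expect the main obstacle to be exactly this last step: controlling the weak-type-to-strong-type passage in the tail integral without losing a logarithmic factor. The resolution is to not prove a tail bound at a single scale at all, but to imitate the NPSS iteration \emph{internally}, replacing each step of their random-walk smoothness argument by an application of the threshold embedding at the scale appropriate to that step's displacement — equivalently, to first establish (as NPSS do for smooth spaces) a bound of the form $\E\,d_X(f(Z_{2s}),f(Z_0))^p \le \E\,d_X(f(Z_s),f(Z_0))^p + O((KS_p(Y))^p)\, s\,\E\,d_X(f(Z_1),f(Z_0))^p$ via a single threshold map applied at scale $\sim (\E\,d_X(f(Z_s),f(Z_0))^p)^{1/p}$ combined with the smoothness of $Y$, and then iterate the doubling. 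Since the threshold embedding is scale-by-scale and the $\|\varphi_\tau\|_{\Lip}$ factors cancel, each stage of the iteration sees a clean $p$-uniformly smooth target, and the telescoping sum produces the linear-in-$t$ bound with the claimed constant $O(KS_p(Y))$. The genus and doubling and Assouad--Nagata corollaries (Theorems~\ref{thm:planar}--\ref{thm:AN}) then follow by plugging in the known threshold embeddings into Hilbert space from Section~\ref{sec:embeddings}, with $S_2(\text{Hilbert})=1$.
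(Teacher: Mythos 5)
Your setup is right, and you correctly identify the central difficulty: at each scale $\tau = 2^j$ the threshold map gives only a weak-type bound $\tau^p\,\pr[D\geq\tau] \leq O((KS_p(Y))^p t)\,\E\,d(f(Z_1),f(Z_0))^p$, whose constant does not decay in $j$, so the dyadic sum over scales diverges. But neither of the resolutions you sketch actually closes this gap. The ``geometric summation costs only a universal constant'' claim contradicts what you yourself observed two sentences earlier --- there is no geometric decay, the bound at every scale is the same constant. The ``adaptive scale $\tau = D_{\mathrm{typ}}$'' route doesn't work either, because choosing a single typical scale $\tau_s \sim (\E\,D^p)^{1/p}$ and applying the one-scale estimate gives a lower bound on $\E\|g(Z_t)-g(Z_0)\|^p$ only in terms of $\pr[D\geq\tau_s]$, and Markov's inequality gives no useful \emph{lower} bound on that probability. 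The doubling inequality $\E\,d^p_{2s}\le\E\,d^p_s + O(\cdot)\,s\,\E\,d^p_1$ you would like is not something a single threshold map yields, precisely because the displacement $D$ is spread across many scales and a single $\varphi_\tau$ is informative only on the event $\{D\geq\tau\}$.

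The missing idea is this: instead of applying the full NPSS Markov-type-$p$ bound in $Y$ as a black box at each scale separately, one must first decompose $\varphi_{2^j}\circ g(Z_t) - \varphi_{2^j}\circ g(Z_0)$ into forward/backward martingales $A^{(j)}_s$, $B^{(j)}_s$ as in \eqref{eq:AB}, and then observe that as $j$ varies these martingales all live on the \emph{same} filtration (the one generated by $\{Z_s\}$) with differences bounded by a \emph{common} envelope $\alpha_s$ depending only on $d(g(Z_{2s}),g(Z_{2s-1}))$ --- the Lipschitz normalization of $\varphi_{2^j}$ kills the $j$-dependence of the envelope. This shared structure is exactly what lets one prove a tail bound that is \emph{uniform over the entire family at once}: Lemma \ref{cor-integrate-tail} gives $\int_0^\infty y^{p-1}\sup_{j}\pr(\|A^{(j)}_u\|\geq y)\,dy \leq C\sum_s\E(\alpha_s^p)$, with the supremum over scales taken \emph{inside} the integral. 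That uniform tail bound is what replaces the divergent dyadic sum; it is proved via a good-$\lambda$ distributional inequality (Lemma \ref{lem-Osekowski}) comparing the maximal function of a martingale in a $p$-uniformly smooth space to its conditional-$p$-variation $\Gamma$, combined with Pisier's martingale inequality. Your writeup never introduces the martingale decomposition, never identifies the common envelope, and never states a uniform-over-scales estimate, so the integration step remains a genuine gap.
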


\medskip
\noindent
{\bf Linear type, cotype, and Kwapien's theorem.}
We now review some fundamental definitions from the geometry of Banach spaces.
A Banach space $X$ is said to have (Rademacher) type $p > 0$ if there exists a constant $T > 0$ such that
for all finite sequences $x_1, \ldots, x_n \in X$,
$$
\E \left\|\sum_{i=1}^n \e_i x_i\right\|^p \leq T^p \sum_{i=1}^n \|x_i\|^p\,,
$$
where $\{\e_1, \ldots, \e_n\}$ is an i.i.d. sequence of random signs.  The least
such constant $T$ is referred to as the type $p$ constant of $X$ and denoted $T_p(X)$.
Similarly, $X$ is said to have (Rademacher) cotype $q < \infty$ if there exists a constant $C > 0$
such that for all finite sequences $x_1, \ldots, x_n \in X$,
$$
\E \left\|\sum_{i=1}^n \e_i x_i\right\|^q \geq \frac{1}{C^q} \sum_{i=1}^n \|x_i\|^q\,.
$$
The least such constant $C$ is denoted $C_q(X)$ and called the cotype $q$ constant of $X$.

It is straightforward that any Hilbert space $H$ has type 2 and cotype 2 and, in fact, $T_2(H)=C_2(H)=1$.
A fundamental theorem of Kwapien states that, up to isomorphism, Hilbert space
is the only Banach space with these properties.

\begin{theorem}[\cite{Kwapien72}]
A Banach space has type 2 and cotype 2 if and only if it is linearly isomorphic to a Hilbert space.
Furthermore, the isomorphism constant is bounded by $T_2(X) \cdot C_2(X)$.
\end{theorem}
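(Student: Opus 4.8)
The plan is to prove the nontrivial implication---that type $2$ and cotype $2$ force isomorphism to a Hilbert space---since the converse is immediate: if $\f : X \to H$ is an isomorphism onto a Hilbert space with $\|\f\|_{\Lip}\,\|\f^{-1}\|_{\Lip} = D$, then applying the parallelogram identity $\E\,\bigl\|\sum_i \e_i z_i\bigr\|_H^2 = \sum_i \|z_i\|_H^2$ to $z_i = \f(x_i)$ and unwinding the two Lipschitz bounds gives $T_2(X) \le D$ and $C_2(X) \le D$. For the forward direction, the first step is to localize: it suffices to show that every finite-dimensional subspace $E \subseteq X$, say with $n = \dim E$, carries an inner product whose Euclidean norm is within a factor $T_2(E)\,C_2(E) \le T_2(X)\,C_2(X)$ of $\|\cdot\|_E$. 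Granting such a bound \emph{uniformly} in $E$, the suitably normalized local inner products form a uniformly bounded net of bilinear forms on the dense subspace $\bigcup_E E$, and a standard ultralimit along the directed set of finite-dimensional subspaces produces a single bilinear form; the uniform two-sided estimate makes it an inner product on $\bigcup_E E$ equivalent to $\|\cdot\|_X^2$, and it extends to all of $X$ by continuity.

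The heart of the matter is therefore the finite-dimensional statement, and the right device is to measure a linear isomorphism $u : \ell_2^n \to E$ by its Gaussian average
\[
\ell(u) \ :=\ \bigl(\E\,\|u g\|_E^2\bigr)^{1/2}, \qquad g \sim N(0,I_n),
\]
so that $ug = \sum_i g_i\,u(\mathsf e_i)$ is a Gaussian random vector in $E$ and $\ell(u)$ depends only on the ellipsoid $u(B_{\ell_2^n})$. Two elementary observations set the stage: testing against functionals in $B_{E^*}$ gives $\|u\|_{\ell_2^n\to E} \le \ell(u)$; and the trace-dual functional $\ell^*(v):=\sup\{\operatorname{tr}(vw) : w : \ell_2^n\to E,\ \ell(w)\le 1\}$, defined for $v : E \to \ell_2^n$, satisfies $\|v\|_{E\to\ell_2^n} \le \ell^*(v)$ together with $\ell(u)\,\ell^*(u^{-1}) \ge \operatorname{tr}(\mathrm{id}_{\ell_2^n}) = n$, with equality attainable for a suitable ``$\ell$-position'' of $u$ (existence by compactness, the balance condition by first-order optimality).

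The linear hypotheses enter through the Gaussian forms of type $2$ and cotype $2$ for $E$: writing $\|u\|_{HS} := \bigl(\sum_i \|u(\mathsf e_i)\|_E^2\bigr)^{1/2}$ for an orthonormal basis $(\mathsf e_i)$ of $\ell_2^n$, one has, for every $u : \ell_2^n \to E$,
\[
\sqrt{\tfrac{2}{\pi}}\,\frac{\|u\|_{HS}}{C_2(E)}\ \le\ \ell(u)\ \le\ \sqrt{\tfrac{\pi}{2}}\;T_2(E)\,\|u\|_{HS},
\]
together with the companion bound $\ell^*(v)\le \sqrt{\pi/2}\,C_2(E)\,\|v^*\|_{HS}$, obtained from the Cauchy--Schwarz estimate $\operatorname{tr}(vw)\le\|v^*\|_{HS}\|w\|_{HS}$ and the cotype bound applied to $w$. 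Feeding these into the two elementary observations, for $u$ in $\ell$-position and for $u^{-1}$, and choosing the auxiliary orthonormal basis of $\ell_2^n$ so that the Hilbert--Schmidt quantities are balanced, one concludes that the Euclidean norm $x \mapsto \|u^{-1}x\|_{\ell_2^n}$ is equivalent to $\|\cdot\|_E$ within the claimed factor.

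I expect the main obstacle to be exactly this finite-dimensional estimate. The correct Euclidean structure on $E$ is not an obvious one such as the John ellipsoid, but the one adapted to the Gaussian functional $\ell$, and one must arrange the type $2$ bound to control how far $\|\cdot\|_E$ can fall \emph{below} the Euclidean norm and the cotype $2$ bound to control how far it can rise \emph{above} it, so that the two slacks \emph{multiply} to $T_2(E)\,C_2(E)$ rather than to something larger; extracting exactly $T_2(X)\,C_2(X)$ rather than a universal multiple of it also requires handling the passage between Rademacher and Gaussian averages with some care, most safely by carrying Gaussian type and cotype constants throughout and recalling that they are dominated by their Rademacher counterparts. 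There is also a conceptually cleaner but lossier route through the factorization theorems of Maurey and Pisier: $E$ has cotype $2$ and, since $E$ has type $2$, so does $E^*$; hence every operator from a $C(K)$-space into either $E$ or $E^*$ is $2$-summing, and the resulting Pietsch factorizations of $\mathrm{id}_E$ through a Hilbert space---read off from the isometric embeddings $E \hookrightarrow \ell_\infty^N$ and $E^* \hookrightarrow \ell_\infty^M$---can be combined to yield the qualitative conclusion with extra universal constants.
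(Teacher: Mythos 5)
The paper does not prove this result; it is cited directly from Kwapien's 1972 paper and used as a black box, so there is no in-paper argument to compare against. I will evaluate your sketch on its own terms.

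You have identified the right machinery --- the Gaussian $\ell$-norm, its trace dual $\ell^*$, the reduction to finite-dimensional subspaces via an ultralimit, and the way type $2$ and cotype $2$ sandwich $\ell(u)$ between Hilbert--Schmidt-type quantities --- and this is indeed the framework used in the modern proofs that give the sharp constant $T_2\,C_2$ (Tomczak-Jaegermann, Pisier). However, the step that you yourself flag as the heart of the matter contains a genuine gap. In the $\ell$-position one has $\ell(u)\,\ell^*(u^{-1}) = n$, together with $\|u\| \le \ell(u)$ and $\|u^{-1}\| \le \ell^*(u^{-1})$; chaining these gives only $\|u\|\,\|u^{-1}\| \le n$, which is \emph{worse} than John's theorem. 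Similarly, inserting the type and cotype estimates into the identity $\ell(u)\,\ell^*(u^{-1}) = n$ produces $n \lesssim T_2\,C_2\,\|u\|_{HS}\,\|(u^{-1})^*\|_{HS}$, a lower bound on the wrong quantity: the product $\|u\|_{HS}\,\|(u^{-1})^*\|_{HS}$ already dominates $n = \operatorname{tr}(u^{-1}u)$ by Cauchy--Schwarz, regardless of any type or cotype hypothesis, so the inequality carries no information. The phrase ``choosing the auxiliary orthonormal basis so that the Hilbert--Schmidt quantities are balanced'' is where all the actual work hides; one needs either the first-order (tangent-space) condition attached to the $\ell$-position --- Lewis' lemma identifying the extremal ellipsoid --- or Pisier's $K$-convexity inequality relating $\ell^*(v)$ to $\ell(v^*)$, to bring $\|u\|$ and $\|u^{-1}\|$ down from the $\sqrt{n}$ scale to the $O(1)$ scale. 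As written, the conclusion does not follow from the inequalities actually stated. The alternative route you mention via Maurey's factorization theorem and Pietsch factorization (applied to $E$ for cotype and to $E^*$ for the consequences of type) is sound, but, as you note, loses the sharp constant to universal multiples.
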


In line with the ``Ribe program'' (see, e.g., \cite{NaorRibe}) and the local theory of Banach spaces,
one might look for non-linear analogs of Kwapien's result.
This would involve non-linear notions of type (e.g., \cite{Ball92}) and cotype (e.g., \cite{MN08})
and a replacement of ``linear isomorphism'' by a suitable non-linear generalization.
We refer to \cite{BL00,MN08,NaorRibe} for a thorough discussion of related issues.
Unfortunately, it is folklore that the most natural generalization, in which one replaces a linear isomorphism
by a bi-Lipschitz mapping is patently false.

To see this, note that there is a family of graph metrics $\{G_k\}_{k=0}^{\infty}$ called the {\em Laakso graphs} (after \cite{Laakso02})
which bi-Lipschitz embed into $L_1$ with distortion at most 2 \cite{GNRS04}.  Furthermore, it is proved that these graphs have Markov type 2 with uniform constant \cite{NPSS06},
i.e. $\sup_{k \geq 1} M_2(G_k) < \infty$.  Since $L_1$ has cotype 2 (see, e.g., \cite{LT79}), a straightforward non-linear Kwapien theorem
would state that they admit bi-Lipschitz embeddings into a Hilbert space with uniformly bounded distortion, but this is known to be impossible \cite{Laakso02}.
Note that one can easily construct a single infinite subset of $L_1$ (by taking a suitable infinite union of the graphs) which
has Markov type 2 but admits no bi-Lipschitz embedding into a Hilbert space.

We propose that the correct analog of ``linear isomorphism'' in the setting of Kwapien's theorem is the notion of
a threshold embedding.  To this end,
one should first observe the following.
% in Section \ref{sec:kwapien}, we prove the following result.

\begin{theorem}\label{thm:equiv}
A Banach space $X$ threshold-embeds into a Hilbert space if and only if it is linearly isomorphic to a Hilbert space.
\end{theorem}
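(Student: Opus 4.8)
The plan is to prove the two implications separately. The backward direction is immediate: if $T : X \to H$ is a linear isomorphism and $D = \|T\|\cdot\|T^{-1}\|$, then putting $\varphi_\tau := T$ for every $\tau > 0$ yields a $D$-threshold-embedding, since $T$ is $\|T\|$-Lipschitz and $d_X(x,y)\ge\tau$ forces $\|Tx-Ty\|_H\ge\tau/\|T^{-1}\|=\|T\|\tau/D$. So the content is the forward direction, and there I would aim to verify the hypotheses of Kwapien's theorem: it is enough to show that $X$ has Rademacher type $2$ and cotype $2$ with constants controlled by the threshold-embedding constant $K$.

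Type $2$ is the routine half. By Theorem~\ref{thm:hilbert}, a $K$-threshold-embedding of $X$ into a Hilbert space gives $M_2(X)\le O(K)$, and it is standard that a Banach space with Markov type $2$ has Rademacher type $2$ with $T_2(X)\le O(M_2(X))$. To see the latter, apply the Markov-type inequality to the standard walk on $\{-1,1\}^n$ (flip a uniformly chosen coordinate at each step) with $f(\varepsilon)=\sum_{i=1}^n\varepsilon_i x_i$, run for $t=n$ steps: then $f(Z_n)-f(Z_0)$ is distributed as $\pm 2\sum_{i\in B}\varepsilon_i x_i$, where $B$ is the random set of coordinates flipped an odd number of times (independent of the signs $\varepsilon_i$) and $\pr[i\in B]$ is bounded away from $0$, so a short convexity estimate turns the Markov-type-$2$ inequality into the Rademacher type-$2$ inequality. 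Hence $T_2(X)\le O(K)$.

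The real work is cotype $2$, and there the plan is to show that $X$ satisfies the metric cotype $2$ inequality of Mendel--Naor~\cite{MN08}, after which Rademacher cotype $2$ follows from their equivalence of metric and Rademacher cotype~\cite{MN08}. Given $f:\mathbb Z_m^n\to X$ (with $m$ chosen as in the Hilbert-space instance), I would post-compose with a threshold map $\varphi_\tau:X\to H$ and transfer the metric cotype $2$ inequality, which holds in $H$, back to $X$. The ``short-edge'' averages $\E\|f(x+\varepsilon)-f(x)\|^2$ can only decrease under the $1$-Lipschitz map $\varphi_\tau$, which is the favourable direction; the difficulty is recovering the ``diagonal'' averages $\E\|f(x+\tfrac m2 e_j)-f(x)\|^2$ from below, and no single scale $\tau$ does this because those diagonal pairs occur at many different scales. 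The remedy, in the spirit of the proof of Theorem~\ref{thm:hilbert}, is to partition the diagonal pairs by the dyadic scale of $\|f(x+\tfrac m2 e_j)-f(x)\|$, use the scale-matched map $\varphi_\tau$ (on which each such pair is bi-Lipschitz up to the factor $K$) on each class, invoke the Hilbert-space metric cotype $2$ inequality at that scale, and recombine. I expect the recombination --- preventing the short-edge side from being inflated by the number of scales --- to be the main obstacle; it should be handled by the same reweighting/averaging-over-scales device that lets Theorem~\ref{thm:hilbert} avoid an aspect-ratio loss. (Alternatively, one could run a ``Markov cotype'' analogue of Theorem~\ref{thm:hilbert} built from the $2$-uniform convexity of $H$.) Granting metric cotype $2$, hence $C_2(X)\le O(K)$, Kwapien's theorem supplies a linear isomorphism $X\to H$ of distortion at most $T_2(X)\,C_2(X)\le O(K^2)$.
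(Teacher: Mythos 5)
Your backward direction and the type-$2$ half of the forward direction match the paper and are fine (the paper also records the simpler alternative that Enflo type $2$, via Proposition~\ref{prop:enflo}, already yields linear type $2$). The cotype-$2$ half is where there is a genuine gap.

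You propose to show that a threshold-embedding into Hilbert space implies \emph{metric} cotype $2$ by a scale-decomposition transfer argument, and then pass to Rademacher cotype $2$ via Mendel--Naor. But whether threshold-embeddings into Hilbert space imply metric cotype $2$ is not established in this paper; it is posed explicitly as one of the open questions in the introduction, and for precisely the obstruction you flag yourself. The obstruction is real: the Markov-type argument establishes an \emph{upper} bound by pointwise dominating the increments of all the martingales $A^{(j)}$ uniformly in the scale index $j$, so a single maximal/tail estimate covers every scale at once. The metric cotype inequality instead requires a \emph{lower} bound on the diagonal averages, and lower bounds do not aggregate across scales the same way: the scale-matched map $\varphi_\tau$ that is bi-Lipschitz on a given diagonal pair may crush other diagonal pairs to zero, and summing the per-scale Hilbert-space cotype inequalities inflates the short-edge side by the number of active scales. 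So ``the recombination should be handled by the same reweighting device'' is not a detail to fill in; it is precisely the open problem, with no reason to expect it yields to the same trick.

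The paper's route to cotype $2$ is entirely different and much lighter. Lemma~\ref{lem:ass} (Assouad's snowflake trick: take a direct sum $\sum_n 2^{-\varepsilon n}\varphi_{2^n}\otimes e_n$ of the scale-$2^n$ threshold maps) shows that a threshold-embedding of $X$ into $L_2$ gives a \emph{bi-Lipschitz} embedding of the snowflake $(X,d^{1-\varepsilon})$ into a Hilbert space, and since snowflaking is a uniform homeomorphism this yields a \emph{uniform} embedding of $X$ itself into Hilbert space. A theorem of Aharoni--Maurey--Mityagin \cite{AMM85} (see \cite[Cor.~8.17]{BL00}) then gives that a Banach space which uniformly embeds into a Hilbert space has cotype $2$. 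Kwapien's theorem finishes as you say. Replace the metric-cotype plan with this snowflake $+$ AMM argument; the type-$2$ half you already have.
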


Initially, we proved this using Theorem \ref{thm:hilbert}, but Assaf Naor pointed out to us
that it follows in a simpler way using the notion of Enflo type, which was introduced in \cite{Enflo70}.
A metric space $(X,d)$ is said to have {\em Enflo type $p$} if there is a constant $E > 0$ such that
for every $n \in \mathbb N$ and every mapping $f : \{0,1\}^n \to X$, we have the inequality
$$
\sum_{\stackrel{x,y \in \{0,1\}^n}{\|x-y\|_1 = n}} d(f(x),f(y))^p \leq E^p \sum_{\stackrel{x,y \in \{0,1\}^n}{\|x-y\|_1 = 1}} d(f(x),f(y))^p\,.
$$
In this case, one says that $(X,d)$ has Enflo type $p$ with constant $E$.
It is known that for any metric space, Markov type $p$ implies Enflo type $p$ \cite[Prop. 1]{NS02}.

\begin{prop}[Naor, personal communication]
\label{prop:enflo}
Suppose $(X,d_X)$ and $(Y,d_Y)$ are metric spaces and $Y$ has Enflo type $p$ with constant $E$.
If $X$ $K$-threshold-embeds into $Y$, then $X$ has Enflo type $p$ with constant $O(KE)$.
\end{prop}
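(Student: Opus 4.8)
\medskip
\noindent\textbf{Proof plan.}
The plan is to first normalize and extract a clean one‑scale inequality, and then to combine the scales.  Since the threshold‑embedding condition is unchanged if each $\f_\tau$ is multiplied by a positive scalar, I may assume $\|\f_\tau\|_{\Lip}=1$ for every $\tau>0$, so that each $\f_\tau:X\to Y$ is $1$‑Lipschitz and $d_X(x,y)\ge\tau$ forces $\|\f_\tau(x)-\f_\tau(y)\|\ge \tau/K$.  Fix $n\in\mathbb N$ and $f:\{0,1\}^n\to X$, and write $D=\sum_{\|x-y\|_1=1}d_X(f(x),f(y))^p$; the goal is $\sum_{\|x-y\|_1=n}d_X(f(x),f(y))^p\le O\big((KE)^p\big)\,D$.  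For a dyadic scale $\tau=2^j$, I would apply the Enflo type $p$ inequality of $Y$ to $\f_{2^j}\circ f:\{0,1\}^n\to Y$: on the edge side, $1$‑Lipschitzness bounds the right‑hand side by $E^pD$; on the antipodal side, every pair $(x,y)$ with $\|x-y\|_1=n$ and $d_X(f(x),f(y))\ge 2^j$ contributes at least $(2^j/K)^p$.  Hence, with $N(t):=\#\{(x,y):\|x-y\|_1=n,\ d_X(f(x),f(y))\ge t\}$, this yields $N(2^j)\le (KE)^pD/2^{jp}$, so $N(t)\le O\big((KE)^pD/t^p\big)$ for all $t>0$, alongside the trivial bounds $N(t)\le 2^{n-1}$ and $N(t)=0$ once $t$ exceeds the diameter of $f(\{0,1\}^n)$.

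The second step is to pass from the per‑scale bounds to a single bound.  A direct layer‑cake estimate of $\int_0^\infty p\,t^{p-1}N(t)\,dt$ from these inequalities costs a factor logarithmic in the aspect ratio of the set of antipodal distances, which is not good enough; instead I would merge all relevant scales into one map.  The key observation is that an $\ell_p$‑direct sum $\big(\bigoplus_j Y_j\big)_{\ell_p}$ of spaces each having Enflo type $p$ with constant $E$ again has Enflo type $p$ with constant $E$, since the $p$th power of the sum norm is the coordinatewise sum of the $p$th powers, so the Enflo inequality for the sum follows by adding those for the coordinates.  The plan is then to form $g:\{0,1\}^n\to\big(\bigoplus_{j\in\mathcal J}Y\big)_{\ell_p}$, $g(x)=(\f_{2^j}(f(x)))_{j\in\mathcal J}$, over a carefully chosen finite family of scales $\mathcal J$, designed so that (i) for each edge $(x,y)$ one has $\sum_{j\in\mathcal J}\|\f_{2^j}f(x)-\f_{2^j}f(y)\|^p=O(1)\cdot d_X(f(x),f(y))^p$, and (ii) each antipodal pair $(x,y)$ with $d_X(f(x),f(y))=\sigma$ is detected at some scale $2^j\in[\sigma/2,\sigma)$ with $j\in\mathcal J$, contributing at least $(\sigma/2K)^p$.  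Granting (i) and (ii) and the Enflo inequality for the sum, one obtains
$\sum_{\|x-y\|_1=n}d_X(f(x),f(y))^p\le (2K)^p\sum_{\|x-y\|_1=n}\|g(x)-g(y)\|^p\le (2KE)^p\sum_{\|x-y\|_1=1}\|g(x)-g(y)\|^p\le O\big((2KE)^p\big)\,D$,
which is Enflo type $p$ for $X$ with constant $O(KE)$.

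The hard part will be arranging (i).  Because $\|\f_\tau\|_{\Lip}=1$ gives only $\|\f_{2^j}f(x)-\f_{2^j}f(y)\|\le d_X(f(x),f(y))$ at every scale, a single long edge of $f$ would a priori contribute its full $p$th power at every scale below its length, so the per‑edge sum over scales need not telescope and the logarithm reappears.  I expect this to be resolved by replacing each $\f_\tau$ with a ``scale‑$\tau$'' variant whose pairwise images are additionally $O(\tau)$‑bounded — for instance by post‑composing with the radial retraction onto a ball of radius $\tau$ (which is $2$‑Lipschitz in any normed space) about a suitable base point, after first coarsening $f$ at scale $\tau$ so that the lower bound in (ii) is not destroyed by the retraction — or, essentially equivalently, by reducing at the outset to maps $f$ whose nonzero edge lengths lie in an absolutely bounded range, where $\mathcal J$ can be taken of bounded size with unit weights.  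Once the effective per‑edge overlap across scales is an absolute constant, the chain of inequalities above closes with the stated constant $O(KE)$.
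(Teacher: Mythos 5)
The paper does not actually contain a proof of Proposition~\ref{prop:enflo}; it is stated as a personal communication from Naor, and only used as a black box in Section~\ref{sec:kwapien}. So there is no ``paper proof'' to compare against; I will instead assess your argument on its own terms.

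Your setup is correct and you identify the right obstruction. Applying Enflo type of $Y$ to $\f_{2^j}\circ f$ at each dyadic scale gives $N(2^j)\le (KE)^pD/2^{jp}$, and a naive layer-cake sum does cost a logarithm in the aspect ratio. Your observation that the $\ell_p$-direct sum of metric spaces of Enflo type $p$ with constant $E$ again has Enflo type $p$ with constant $E$ is also correct, and the reduction of the whole statement to your properties (i) and (ii) is valid. The problem is that (i) --- bounding the \emph{per-edge} contribution across scales by $O(1)\cdot d_X(f(x),f(y))^p$ --- is left as a gap, and the two fixes you sketch do not close it as written. First, the radial retraction onto a $\tau$-ball about a fixed base point does not preserve the threshold separation: if $\f_\tau(a)$ and $\f_\tau(b)$ are both at distance $R\gg\tau$ from the center and differ by a roughly transverse vector of length $\tau/K$, then after retracting to the sphere of radius $\tau$ their distance drops to about $\tau^2/(RK)$, which can be arbitrarily small; so the $\tau/K$ lower bound in (ii) is destroyed, not preserved, and the ``coarsening of $f$'' that is supposed to prevent this is not specified. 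Second, the retraction argument requires $Y$ to be a normed space, whereas the proposition is stated for an arbitrary metric space $Y$ of Enflo type $p$; no retraction is available in that generality. Third, the alternative ``reduce to maps $f$ whose nonzero edge lengths lie in a bounded range'' is asserted without an argument, and I do not see how to perform such a reduction without itself incurring the logarithmic loss in the diameter-to-edge-length ratio you are trying to avoid.

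The essential missing ingredient is a genuinely \emph{bounded} family of threshold maps at each scale --- maps $\psi_\tau$ that are $O(1)$-Lipschitz, $\tau/O(K)$-thresholding, and satisfy $\sup_{x,y}d_Y(\psi_\tau(x),\psi_\tau(y))\le O(\tau)$ --- after which your $\ell_p$-sum argument closes with a geometric series in place of a logarithm. For $Y=L_2$ such a family exists and is exactly what the paper invokes elsewhere (Lemma~\ref{lem:ass} cites a lemma of Mendel and Naor for precisely this ``boundification''), but that construction relies on the linear structure of $L_2$. Either restrict $Y$ to be a Banach space and cite (or reprove) that boundification lemma carefully, or supply a construction of bounded threshold maps that works for a general metric target; as it stands, the argument has a real gap at the step you yourself flagged as ``the hard part.''
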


Using the preceding result along with a theorem of \cite{AMM85} and Kwapien's theorem,
Theorem \ref{thm:equiv} follows readily.  We refer to Section \ref{sec:kwapien}.

\medskip

%Suppose that $X$ and $Y$ are Banach spaces and there is a map $\f : X \to Y$ and a constant $K > 0$
%such that for all $x,y \in X$ with $\|x-y\| \geq 1$,
%$$
%\|x-y\|/K \leq \|\f(x)-\f(y)\| \leq K \|x-y\|\,.
%$$
%In this case, by a simple ultrafilter argument, $X$ is bi-Lipschitz equivalent
%to a subset of $(Y)_{\mathcal U}^**$.  In particular, if $Y$ is a Hilbert space,
%then $X$ is linearly isomorphic to a Hilbert space.

While we are not able to give a full non-linear analog of Kwapien's theorem, we do take some steps
in this direction.  In particular, our methods are strong enough to give such a result for subsets of $L_1$.
Recall that a {\em uniform embedding} $f : X \to Y$ between metric spaces $X$ and $Y$ is an invertible
map such that $f$ and $f^{-1}$ are both uniformly continuous.  It is known that if a Banach space $X$
admits a uniform embedding into a Hilbert space, then $X$ has cotype 2, but
the converse is not true, even for spaces of non-trivial type (see \cite[\S 8.2]{BL00}).
We prove the following first step.

\begin{theorem}
Suppose that $X$ is a Banach space that admits a uniform embedding into a Hilbert space.
Then a subset $S \subseteq X$ threshold-embeds into Hilbert space if and only if $S$ has Markov type 2.
\end{theorem}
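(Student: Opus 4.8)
For the ``only if'' direction there is nothing to do: if $S$ threshold-embeds into a Hilbert space then Theorem~\ref{thm:hilbert} gives $M_2(S)<\infty$, so $S$ has Markov type~$2$. For the converse we must use the ambient hypothesis, and the plan is to first extract a \emph{snowflake} embedding from it. By a theorem of Aharoni--Maurey--Mityagin \cite{AMM85} on uniform embeddings of Banach spaces into Hilbert space, together with Schoenberg's classical theorem, the assumption that $X$ uniformly embeds into a Hilbert space yields some $\beta=\beta(X)\in(0,1)$ and a bi-Lipschitz embedding $g:(X,d_X^\beta)\to H$ into a Hilbert space $H$, say $c_1\,d_X(x,y)^\beta\le\|g(x)-g(y)\|_H\le c_2\,d_X(x,y)^\beta$. (For $X=L_1$ one may take $\beta=\tfrac12$ and $g$ an isometry, since $\|x-y\|_1$ is a negative-type metric.) Now $g$ is only $\beta$-H\"older, hence hopelessly far from Lipschitz at small scales, so it is not itself a threshold embedding; the point is that restricting $g$ to a net at the appropriate scale makes the H\"older defect disappear, and then Ball's extension theorem (which is where Markov type~$2$ enters) carries the restriction back to all of $S$.

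In detail, fix $\tau>0$ and a small constant $\varepsilon>0$, to be chosen below, and let $N\subseteq S$ be a maximal $\varepsilon\tau$-separated subset; thus $N$ is $\varepsilon\tau$-dense in $S$ and distinct points of $N$ lie at $d_X$-distance $\ge\varepsilon\tau$. Since $\beta<1$, for $p\ne q\in N$ we have $\|g(p)-g(q)\|_H\le c_2\,d_X(p,q)^\beta\le c_2(\varepsilon\tau)^{-(1-\beta)}d_X(p,q)$, so $g|_N$ is $L$-Lipschitz with $L:=c_2(\varepsilon\tau)^{-(1-\beta)}$. By Theorem~\ref{thm:ballext}, applied with the Markov-type-$2$ space taken to be $S$, the subset taken to be $N$, and the $2$-uniformly convex target taken to be $H$, there is an extension $\varphi_\tau:S\to H$ with $\varphi_\tau|_N=g|_N$ and $\|\varphi_\tau\|_{\Lip}\le C\cdot L$, where $C=C(S,H)$ is the constant from that theorem (which depends on $S$ only through $M_2(S)$). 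I claim $\{\varphi_\tau\}_{\tau>0}$ is a $K$-threshold embedding. Indeed, if $d_X(x,y)\ge\tau$, choose nearest net points $p,q\in N$ to $x,y$; then $d_X(x,p),d_X(y,q)\le\varepsilon\tau$, while $p\ne q$ (otherwise $d_X(x,y)\le2\varepsilon\tau<\tau$) and $d_X(p,q)\ge\tau-2\varepsilon\tau$, so since $\varphi_\tau$ agrees with $g$ at $p$ and $q$,
$$
\|\varphi_\tau(x)-\varphi_\tau(y)\|_H\;\ge\;\|g(p)-g(q)\|_H-2\|\varphi_\tau\|_{\Lip}\,\varepsilon\tau\;\ge\;\tau^\beta\Big(c_1(1-2\varepsilon)^\beta-2Cc_2\,\varepsilon^\beta\Big).
$$
Choosing $\varepsilon$ small enough that $2Cc_2\varepsilon^\beta\le\tfrac12 c_1$ makes the right-hand side at least $\tfrac14 c_1\tau^\beta$. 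Since also $\|\varphi_\tau\|_{\Lip}\,\tau\le Cc_2\varepsilon^{-(1-\beta)}\tau^\beta$, we obtain $\|\varphi_\tau(x)-\varphi_\tau(y)\|_H\ge\|\varphi_\tau\|_{\Lip}\,\tau/K$ with $K:=4Cc_2/(c_1\varepsilon^{1-\beta})$, a constant independent of $\tau$ (depending only on $X$ and $M_2(S)$), which is exactly a $K$-threshold embedding of $S$ into $H$.

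The difficulty here is conceptual rather than computational: one must recognize that the ``free'' snowflake embedding from \cite{AMM85} can be upgraded to a genuine threshold embedding, and that the only missing ingredient --- extending a Lipschitz map from a net of $S$ to all of $S$ with bounded loss --- is supplied precisely by Markov type~$2$ through Ball's theorem. The step requiring care is the balancing of constants: the Lipschitz blow-up $C$ and the net radius $\varepsilon\tau$ must be tuned so that the resulting $K$ is finite and, crucially, independent of $\tau$. It is the power-type modulus furnished by \cite{AMM85} --- as opposed to the merely uniform-continuity modulus of a generic uniform embedding --- that permits a single, $\tau$-independent choice of $\varepsilon$ and hence of $K$. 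One should also note that $M_2(S)$ is invoked only through Theorem~\ref{thm:ballext}, so the argument does not overreach to show that every subset of $X$ has Markov type~$2$ --- a statement that for $X=L_1$ would resolve a well-known open problem.
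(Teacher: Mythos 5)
Your overall strategy is sound and the extension step is essentially the same one the paper uses: restrict a ``thresholding'' map to an $\e\tau$-net, invoke Theorem~\ref{thm:ballext} to extend it from the net to all of $S$ with controlled Lipschitz constant, and then use the triangle inequality to transfer the lower bound from net points to arbitrary points at distance $\ge\tau$. (The ``only if'' direction via Theorem~\ref{thm:hilbert} is of course fine.)

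Where you diverge from the paper --- and where there is a genuine gap --- is in how you produce the map to be restricted to the net. You assert that [AMM85] ``together with Schoenberg's classical theorem'' yields a bi-Lipschitz embedding $g:(X,\|\cdot\|^\beta)\to H$ for some power $\beta\in(0,1)$. This does not follow from those two results alone. What [AMM85] gives is that $X$ uniformly embeds into Hilbert space iff $X$ is linearly isomorphic to a subspace of $L_0(\mu)$; Schoenberg's theorem tells you that $(L_q,\|\cdot\|_q^{q/2})$ embeds isometrically into Hilbert space for $0<q\le 2$. To bridge the two you need a Nikishin--Maurey type factorization asserting that the isomorphic embedding $X\hookrightarrow L_0(\mu)$ can be replaced, after a change of density, by a \emph{bounded-below} linear embedding $X\hookrightarrow L_q(\nu)$ for some $q>0$ --- and this is a nontrivial further step that you neither prove nor cite. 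As written, the existence of a power-type snowflake is unsupported; a generic uniform embedding $f$ only gives you $\|f(x)-f(y)\|^2$ as a negative-definite kernel with some unspecified modulus of continuity near $0$, not a power.

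It is worth pointing out that the paper avoids this entire issue. Its Lemma~\ref{lem:lambda} is fed not by a snowflake embedding but by the quantity $\Lambda_\tau(X,\e)$, which is produced directly from the uniform embedding $f$ via two elementary observations: (a) after rescaling, $f$ is $\tau$-thresholding for a single $\tau$, and homogeneity of the norm upgrades this to every scale; (b) a metric-convexity argument shows that the Lipschitz constant of $f$ on $\e\tau$-separated pairs is controlled by the modulus of continuity of $f$ at scale $\e\tau$, which tends to $0$ as $\e\to 0$. No linear theory of $L_0$, no Nikishin factorization, and no power-type estimate are needed. Your route would work if the snowflake claim were justified --- and it buys a cleaner-looking modulus --- but it trades an elementary, self-contained argument for a dependence on deep factorization theorems, and the citation you give does not cover the step actually required.
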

In particular, it is well-known that $L_1$ uniformly embeds into $L_2$, thus a subset of $L_1$ threshold-embeds
into Hilbert space if and only if it has Markov type 2.
There seem to be two non-trivial steps in completing a non-linear Kwapien theorem.  The
first involves metric subsets of linear spaces.

\begin{conjecture}
If $X$ is a Banach space of cotype 2, then a subset $S \subseteq X$ threshold-embeds
into Hilbert space if and only if $S$ has Markov type 2.
\end{conjecture}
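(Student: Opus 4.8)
\medskip
\noindent\textbf{Proof plan.}\
The ``if'' direction is immediate: if $S$ admits a $K$-threshold embedding into a Hilbert space then $M_2(S)\le O(K)<\infty$ by Theorem~\ref{thm:hilbert}. So assume $M_2(S)<\infty$; we must produce a threshold embedding of $S$ into a Hilbert space. First I would reduce to finite $S$. Threshold-embeddability with a fixed constant is, by definition, imposed one scale at a time, and it is harmless to consider only scales $\tau\in\{2^{k}:k\in\mathbb Z\}$ (a $K$-threshold embedding at scale $\tau$ is a $2K$-threshold embedding at scale $2\tau$). For a fixed scale $\tau$, the existence of a $1$-Lipschitz map $v:S_0\to H$ with $\|v_x-v_y\|\ge \tau/K$ whenever $\|x-y\|\ge\tau$ is, for finite $S_0$, the feasibility of a semidefinite program in the pairwise Gram entries. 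Taking an ultralimit over finite $S_0\subseteq S$ of the (basepoint-normalized, hence bounded) Gram matrices of such maps upgrades a uniform bound $\sup_{S_0}K(S_0)<\infty$ to a single threshold embedding of all of $S$, scale by scale; and since Markov type passes to subsets, it suffices to bound $K(S_0)$ by a constant depending only on $X$ and $M_2(S)$.

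The only structural input beyond $M_2(S)<\infty$ is that $X$ uniformly embeds into a Hilbert space. By definition this gives a map $U:X\to H$ and non-decreasing moduli $\rho_1,\rho_2$ with $\rho_1(t)>0$ for $t>0$ and $\rho_2(t)\to0$ as $t\to0^+$ such that $\rho_1(\|x-y\|)\le\|U(x)-U(y)\|\le\rho_2(\|x-y\|)$; in particular $\phi(x,y):=\|U(x)-U(y)\|^2$ is a symmetric negative-definite kernel on $X$, comparable to $\|\cdot-\cdot\|^2$ only through these (possibly very lopsided) moduli. The difficulty is that $U$ is merely H\"older-type rather than Lipschitz: from $\phi$ alone one can only manufacture negative-definite kernels on $S$ that are concave---hence at most affine---functions of the distance, and never one bounded above by $\|\cdot-\cdot\|^2$, which is exactly what a $1$-Lipschitz embedding separating a prescribed scale amounts to. So Markov type $2$ of $S$ must be used to close this gap, and that is the real content of the proof.

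I would argue by duality. Fix finite $S_0$ and a scale $\tau$, and write $L_{ab}=(e_a-e_b)(e_a-e_b)^{\mathsf{T}}$ for the Laplacian of an edge. The nonexistence of a $1$-Lipschitz $v:S_0\to H$ separating the pairs at distance $\ge\tau$ by $\tau/K$ is the infeasibility of an SDP, and SDP duality yields a signed weighting $(w_{ab})$ on pairs of $S_0$ whose negative part $w^-$ is supported on pairs at distance $\ge\tau$, whose signed graph Laplacian satisfies $\sum_{a,b}w_{ab}L_{ab}\succeq0$, and for which
\[
\sum_{a,b:\,w_{ab}>0}w_{ab}\,\|a-b\|^2\;<\;\frac{\tau^2}{K^2}\sum_{a,b:\,w_{ab}<0}|w_{ab}|.
\]
Write $w=w^+-w^-$ and $W^{\pm}=\sum_{a,b}w^{\pm}_{ab}$; the positive part $w^+$ normalizes to a reversible Markov chain $\{Z_t\}$ on $S_0$. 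Testing $\sum w_{ab}L_{ab}\succeq0$ on the coordinates of $U$ gives $\sum w^+_{ab}\,\rho_2(\|a-b\|)^2\ge\sum w^-_{ab}\|U_a-U_b\|^2\ge\rho_1(\tau)^2 W^-$, while taking traces in $L_{w^+}\succeq L_{w^-}$ gives $W^+\ge W^-$; together with the displayed budget violation these pin the one-step squared displacement $\E\,\|Z_0-Z_1\|^2=\frac1{W^+}\sum w^+_{ab}\|a-b\|^2$ of the chain at $O(\tau^2/K^2)$. Finally, since $L_{w^+}\succeq L_{w^-}$ dominates, in the Dirichlet-form order, a graph all of whose edges join points at distance $\ge\tau$, the chain $\{Z_t\}$ must equilibrate across those pairs within $O(W^+/W^-)$ steps---this is where one uses the estimate from $U$, to certify that the $w^-$-demand really sits at scale $\tau$ and that the short edges of $w^+$ cannot interfere---and feeding this into $\E\,\|Z_t-Z_0\|^2\le M_2(S)^2\,t\,\E\,\|Z_0-Z_1\|^2$ forces $K\le C(\rho_1,\rho_2)\,M_2(S)$. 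Since $\rho_1,\rho_2$ depend only on $X$, this bound is uniform over finite $S_0$, and letting $K\to\infty$ contradicts $M_2(S)<\infty$.

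The step I expect to be genuinely delicate---and the one that forces both hypotheses---is this last one: converting the spectral domination $L_{w^+}\succeq L_{w^-}$ into a quantitative hitting-time bound for the $w^+$-chain across the scale-$\tau$ pairs carried by $w^-$, for an arbitrary (not merely single-edge) $w^-$, while ruling out that the short ``tension'' edges of $w^+$---whose presence an unconditional argument could never forbid---defeat the budget. The Hilbert-valued $U$ (which has Markov type $2$ with constant $1$) is what turns the combinatorial data into a displacement estimate and localizes the obstruction to scale $\tau$, and Markov type $2$ of $S$ is what rules out the short tension edges; being unable to drop either is precisely what separates this result from a complete non-linear analog of Kwapien's theorem.
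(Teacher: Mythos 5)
This statement is an open \emph{conjecture} in the paper, not a theorem; the authors explicitly write that they cannot resolve it even for the Schatten classes $C_p$, $1\le p<2$. Your proof does not establish it, and the reason is visible already in your second paragraph: you write that ``the only structural input beyond $M_2(S)<\infty$ is that $X$ uniformly embeds into a Hilbert space,'' and from then on the argument runs entirely off the kernel $\phi(x,y)=\|U(x)-U(y)\|^2$ supplied by that uniform embedding $U$. But the conjecture's hypothesis is only that $X$ has cotype $2$, and cotype $2$ does \emph{not} imply uniform (or coarse) embeddability into Hilbert space --- the paper itself flags $C_1$ as a cotype-$2$ space that admits no such embedding (Section~\ref{sec:kwapien}, citing \cite[\S 8.2]{BL00}). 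So you have silently strengthened the hypothesis, and what you are actually aiming at is Theorem~\ref{thm:kwap}, not the conjecture. The gap between the two hypotheses is precisely what makes the conjecture open.

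Even viewed as a proof of Theorem~\ref{thm:kwap}, your route diverges from the paper's and has a soft spot. The paper proves that theorem by introducing $\Lambda$-nontriviality, showing it follows from a uniform or coarse embedding and passes to subsets, and then invoking Ball's extension theorem (Theorem~\ref{thm:ballext}): one restricts a $\tau$-thresholding map to an $\e\tau$-net, extends it Lipschitzly using $M_2(S)<\infty$, and checks the extension is still thresholding. Your route --- write the existence of a scale-$\tau$ threshold map on a finite $S_0$ as an SDP, take a dual certificate $w=w^+-w^-$ with $L_{w^+}\succeq L_{w^-}$, normalize $w^+$ to a reversible chain, and contradict Markov type $2$ --- is a genuinely different decomposition, and it is an interesting one. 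But the step you yourself flag as ``genuinely delicate'' is where it breaks: $L_{w^+}\succeq L_{w^-}$ is an order on Dirichlet forms and does not, on its own, yield a hitting-time or equilibration-time bound of order $W^+/W^-$ for the $w^+$-chain across the pairs carried by $w^-$; and even granting such a bound, feeding it into the Markov type inequality produces $K\le O(M_2\sqrt{W^+/W^-})$, which is vacuous unless you can also bound $W^+/W^-$ from above --- and the trace inequality you cite only gives $W^+\ge W^-$, the wrong direction. Some additional idea (perhaps an averaging over scales, or a smarter normalization of the dual certificate) is needed to close this, and it is not clear one exists; this is exactly where the paper instead reaches for Ball's extension theorem.
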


We are not able to  resolve the validity of the
conjecture for the spaces of Schatten class operators $C_p$ for
$1 \leq p < 2$ (see \cite[\S 8.2]{BL00}).
A truly satisfactory non-linear Kwapien theorem would involve no reference to linear spaces at all; it would instead rely on an appropriate
notion of metric cotype.

\begin{question}
Suppose that $(X,d)$ is a metric space of metric cotype 2 (in the sense of \cite{MN06}) and Markov type 2.
Does this imply that $X$ threshold-embeds into Hilbert space?
\end{question}

Part of the other side of this question is open as well.
By the non-linear Maurey-Pisier Theorem \cite{MN06}, we know that if $X$ threshold-embeds
into a Hilbert space then it has finite metric cotype, but more should be true.

\begin{question}
If $(X,d)$ threshold-embeds into Hilbert space, does this imply that $X$ has metric cotype 2?
\end{question}

This would imply, in particular, that planar and doubling metrics have metric cotype 2,
answering a question of \cite[\S 4]{NaorRibe}.

\medskip
\noindent
{\bf Finite metric spaces and some historical remarks.}
Threshold embeddings have been studied in the context of embeddings
of finite metric spaces into Banach spaces.  Rao \cite{Rao99} showed
that finite planar graph metrics admit $O(1)$-threshold-embeddings
into Euclidean space in his proof of a multi-commodity max-flow/min-cut theorem.
More generally, using the results of \cite{KPR93}, he proved
this for any family of graphs excluding a fixed minor.
On the other hand, Bourgain \cite{Bourgain86}  exhibited an infinite family of (finite) trees
that admit no bi-Lipschitz embedding with uniformly bounded distortion into a Hilbert space.

It was earlier shown by Semmes \cite{Semmes96}, using an important result of Pansu \cite{Pansu89}, that the 3-dimensional Heisenberg group $\mathbb H^3$
(which is doubling) does not admit a bi-Lipschitz embedding into any finite-dimensional Euclidean space.  Since Pansu's technique
can be extended to any Banach space having the Radon-Nikodym property (see \cite{LN06}), this yields a metric
space which threshold-embeds into Hilbert space, but does not bi-Lipschitz embed into any Banach space with the RNP.
An example of \cite{Laakso02} gives an infinite doubling, planar graph metric which does not
bi-Lipschitz embed into any uniformly convex Banach space.  It was an open problem (of  relevance to
applications in theoretical computer science) to determine whether any metric space which threshold-embeds
into Hilbert space admits a bi-Lipschitz embedding into $L_1$.  This was resolved negatively
by Cheeger and Kleiner \cite{CK10} who showed that $\mathbb H^3$
does not bi-Lipschitz embed into $L_1$.

Returning to finite metric spaces, the utility of a family of mappings for each scale was made explicit in \cite{KLMN05}
where the authors use this approach to give a new proof of Bourgain's theorem \cite{Bourgain85}
on embedding of finite metric spaces into Hilbert space.  In \cite{Lee05},
it is proved that if an $n$-point metric space $X$ $K$-threshold-embeds
into $L_p$ for $p \geq 2$, then $X$ bi-Lipschitz embeds into $L_p$
with distortion $O(K^{1-1/p} (\log n)^{1/p})$.  In particular, such a space
admits a distortion $O(K)$ embedding into $L_{p}$ for some $p = O(\log \log n)$.

In \cite{NPSS06}, what we call threshold embeddings are referred to as ``weak embeddings.''
The authors also define a notion of ``weak Markov type 2'' and show that this property readily
follows from the existence of a threshold embedding into Hilbert space.

\subsection{Outline of our approach}

Let $\mathcal Z$ be a normed space and
consider a Markov chain $\{Z_t\}_{t=0}^{\infty}$ on a finite state space $\Omega$
and a mapping $f : \Omega \to \mathcal Z$.  In \cite{NPSS06}, it is shown that
for every time $t \in \mathbb N$, there exist martingales $\{M_k\}_{k=0}^{t}$ and $\{N_k\}_{k=0}^t$ such that
\begin{equation}\label{eq:NPSS}
f(Z_{2t})-f(Z_{0}) = M_t - N_t\,,
\end{equation}
where $\{M_k\}$ and $\{N_k\}$ both naturally trace the
evolution of $\{f(Z_k)\}$ forward in time and backward in time, respectively.
The decomposition is reviewed in Section \ref{sec:decomp}.
This allows one to reduce various problems on Markov chains to
potentially easier problems on martingales.

Now consider a metric space $(X,d)$ and
a threshold embedding $\{ \f_{\tau} : X \to \mathcal Z : \tau > 0\}$ of $X$ into $\mathcal Z$.
In determining the Markov type of $(X,d)$, one is naturally led to study mappings $g : \Omega \to X$
via the composition maps $\f_{\tau} \circ g : \Omega \to \mathcal Z$.  But crucially,
the martingales $\{M_k\}$ and $\{N_k\}$ from \eqref{eq:NPSS} depend heavily on the map $f$,
and thus the problem of deducing Markov type from a threshold embedding
becomes one of controlling an entire family of martingales---one for every map $\f_{\tau} \circ g$ for $\tau > 0$.

Fortunately, when one allows the map $f$ in \eqref{eq:NPSS} to vary, all the martingales that arise
are defined with respect to the same pair of filtrations, and their differences can be uniformly controlled
in terms of jumps of the chain $\{f(Z_k)\}$.
This leads to a problem on simultaneously bounding the tail of all martingales
whose differences are subordinate to a common sequence of random variables.
We present a representative lemma of this form, for the case of real-valued martingales.

\begin{lemma}\label{lem:rval}
There exists a constant $K > 0$ such that the following holds.
Let $\{\mathcal F_t\}$ be a filtration, and let $\{\alpha_t\}$ be a sequence adapted to $\{\mathcal F_t\}$.
Let $$\left\{\vphantom{\bigoplus}\{M_t^{\xi}\} : \xi \in I\right\}$$ be a countable collection of real-valued
martingales with respect to $\{\mathcal F_t\}$ such that
$|M^{\xi}_t - M^{\xi}_{t-1}| \leq \alpha_t$ for all $t \geq 1$.  Then for every $n \geq 0$, we have
$$\int_0^\infty y \sup_{\xi\in I} \P(|M_n^\xi - M_0^{\xi}| \geq y)\, dy \leq K \sum_{t=1}^n \E (\alpha_t^2)\,.$$
\end{lemma}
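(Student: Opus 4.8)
The plan is to obtain, for each fixed $y>0$, a bound on $\sup_{\xi\in I}\P(|M_n^\xi-M_0^\xi|\ge y)$ by a finite sum of tail probabilities of random variables that do \emph{not} depend on $\xi$, plus an exponentially small error. The whole point is that the increment bound $\alpha_t$ is common to all the martingales, so that a single truncation scheme and a single concentration estimate serve the entire family simultaneously. Fix $n$, assume without loss of generality $M_0^\xi=0$, write $\Delta_t^\xi=M_t^\xi-M_{t-1}^\xi$, and set $V_n=\sum_{t=1}^n\alpha_t^2$ and $\Gamma_n=\sum_{t=1}^n\E[\alpha_t^2\mid\mathcal F_{t-1}]$, so that $\E V_n=\E\Gamma_n=\sum_t\E\alpha_t^2=:A$.

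First I would truncate the jumps at a level $B=B(y)$ to be chosen. With $c_t^\xi=\E[\Delta_t^\xi\mathbf 1_{\{\alpha_t\le B\}}\mid\mathcal F_{t-1}]$, set $\tilde\Delta_t^\xi=\Delta_t^\xi\mathbf 1_{\{\alpha_t\le B\}}-c_t^\xi$, a martingale difference with the \emph{predictable} bound $|\tilde\Delta_t^\xi|\le 2B$ and $\E[(\tilde\Delta_t^\xi)^2\mid\mathcal F_{t-1}]\le\E[\alpha_t^2\mid\mathcal F_{t-1}]$, and let $\tilde M^\xi$ be the corresponding martingale. Since $c_t^\xi=-\E[\Delta_t^\xi\mathbf 1_{\{\alpha_t>B\}}\mid\mathcal F_{t-1}]$, the remainder $R^\xi:=M^\xi-\tilde M^\xi$ satisfies $|R_n^\xi|\le G:=\sum_t\alpha_t\mathbf 1_{\{\alpha_t>B\}}+\sum_t\E[\alpha_t\mathbf 1_{\{\alpha_t>B\}}\mid\mathcal F_{t-1}]\le (V_n+\Gamma_n)/B$, and $G$ is independent of $\xi$. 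For $\tilde M^\xi$, whose increments are predictably bounded by $2B$ and whose predictable quadratic variation $\langle\tilde M^\xi\rangle_n$ is dominated by $\Gamma_n$, Freedman's inequality gives for every $v>0$
$$\P\big(|\tilde M_n^\xi|\ge z,\ \langle\tilde M^\xi\rangle_n\le v\big)\le 2\exp\!\Big(-\tfrac{z^2}{2(v+2Bz/3)}\Big),$$
and since $2B$ and $v$ do not depend on $\xi$ this is a \emph{uniform} bound: $\sup_\xi\P(|\tilde M_n^\xi|\ge z)\le\P(\Gamma_n>v)+2\exp(-z^2/(2(v+2Bz/3)))$. Combining this with $|M_n^\xi|\le|\tilde M_n^\xi|+G$ and a dyadic decomposition of $\{\langle\tilde M^\xi\rangle_n>v\}$ over the scales of $\Gamma_n$, one arrives at a bound of the schematic shape
$$\sup_\xi\P(|M_n^\xi|\ge y)\ \le\ \P\big(V_n+\Gamma_n\ge\tfrac12 B(y)\,y\big)\ +\ \sum_{j}\min\!\Big(\P(\Gamma_n>2^{j}),\ 2e^{-c\,y^2/(2^{j}+B(y)y)}\Big).$$

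Then I would integrate in $y$. The tail terms are handled by the elementary identities $\int_0^\infty y\,\P(X\ge\kappa y^2)\,dy=\E[X]/(2\kappa)$ and $\int_0^\infty y\,\P(Y\ge\kappa y)\,dy=\E[Y^2]/(2\kappa^2)$, applied with $X\in\{V_n,\Gamma_n\}$ (so these contribute $O(A)$ as long as $B(y)\asymp y$) and, for the $\max$-jump contribution that also appears when one controls the increments inside each dyadic block, with $Y=\max_t\alpha_t$ and the estimate $\E[(\max_t\alpha_t)^2]\le\sum_t\E\alpha_t^2=A$; the dyadic sums themselves are bounded through $\sum_j 2^{j}\,\P(\Gamma_n>2^{j})\lesssim\E\Gamma_n=A$. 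The step I expect to be the main obstacle is precisely this balancing: with $B(y)$ and $v$ chosen literally proportional to $y$ and $y^2$ the exponential term becomes scale-free, i.e. a constant independent of $y$, and $\int_0^\infty y\cdot\mathrm{const}\,dy$ diverges, so the truncation level and the quadratic-variation cutoff must be coupled to $y$ with some delicacy — slightly sublinear in $y$ on the concentration side, yet comparable to $y^2$ on the tail side — and one must check that the logarithmic losses generated by the dyadic summation are absorbed by the extra decay that finiteness of $\E V_n$ forces on the tails of $V_n$ and $\Gamma_n$. Making these choices so that the entire right-hand side integrates to $O(A)=O\big(\sum_{t=1}^n\E\alpha_t^2\big)$ is the technical heart of the proof.
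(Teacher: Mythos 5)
Your approach is genuinely different from the paper's, and at the point where you stop it has a real gap, not merely an omitted computation. The paper's argument (Lemma~\ref{lem-Osekowski}, specialized to $p=2$) is a good-$\lambda$ distribution-function inequality: a stopping-time construction plus Doob's maximal inequality and Pisier's inequality yield
$\P(M^*\geq\beta\lambda)\leq\bigl(\tfrac{K\delta}{\beta-\delta-1}\bigr)^2\,\P(M^*\geq\lambda)+\P(\Gamma\vee\Delta^*\geq\delta\lambda)$,
and the second term is already dominated by the $\xi$-independent random variable $\Gamma_\alpha\vee\alpha^*$. One then takes $\sup_\xi$, multiplies by $\beta^2\lambda$, integrates, \emph{absorbs} the $\P(M^*\geq\lambda)$ term back into the left side after a change of variables, and is left with $\E[(\Gamma_\alpha\vee\alpha^*)^2]\lesssim\sum_t\E\alpha_t^2$. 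The key feature is that nothing pointwise-in-$\lambda$ ever needs to be integrable: the self-referential structure closes the integral without unpacking $\P(\Gamma_\alpha>v)$ or $\P(M^*>\lambda)$ at all.

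Your plan, by contrast, seeks a single pointwise tail bound $\sup_\xi\P(|M_n^\xi|\geq y)\leq F(y)$ with $F$ depending only on $\{\alpha_t\}$ and with $\int yF(y)\,dy=O(\sum_t\E\alpha_t^2)$. I do not think the truncation-plus-Freedman route can deliver this with the information available, and the obstacle is precisely the one you flag but leave unresolved. The remainder term forces $\int y\,\P(V_n+\Gamma_n\geq B(y)y/2)\,dy\lesssim A\int dy/B(y)$, and you only have $\E\Gamma_n,\E V_n\leq A$, so Markov is tight and you need $\int dy/B(y)<\infty$, i.e.\ $B(y)$ superlinear. But Freedman's exponent is $y^2/(2v+\tfrac43 B(y)y)$, and with $B(y)$ superlinear this tends to $0$, so the exponential term is not even $o(1)$, let alone integrable against $y\,dy$. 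If instead you take $B(y)\asymp y$ to keep Freedman honest, the remainder integral $\int dy/B(y)$ diverges. The dyadic decomposition over scales of $\Gamma_n$ does not rescue this: each dyadic block contributes roughly $2^j p_j\log(1/p_j)$ with $p_j=\P(\Gamma_n>2^{j-1})$, and $\sum_j 2^j p_j\asymp\E\Gamma_n=A$ while $\sum_j 2^j p_j\log(1/p_j)$ can exceed $A$ by an unbounded logarithmic factor when the only control on $\Gamma_n$ is its first moment. The ``extra decay that finiteness of $\E V_n$ forces'' does not exist beyond Markov. So the step you identify as ``the technical heart'' is not just unfinished; the balancing you describe has no valid choice of $B(y)$ and $v(y)$ within this framework.

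In short: what you lose by insisting on a pointwise-in-$y$ bound is exactly what the good-$\lambda$ mechanism is designed to recover. I would suggest either adopting the paper's stopping-time/good-$\lambda$ route (which you can also prove with Doob plus the orthogonality of real martingale increments in place of Pisier, since $p=2$), or, if you want to keep a ``single quantitative tail bound'' flavor, prove a domination of the form $\sup_\xi\P(M^*_\xi\geq y)\leq C\,\P(B^*_{T}\geq y/C)$ for a single Brownian maximum via Skorokhod-type embedding and then integrate that --- but that is a different and substantially harder program than truncate-and-Freedman.
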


A generalization of this result to martingales taking values in uniformly smooth Banach spaces
appears as Lemma \ref{cor-integrate-tail}.  Our approach to Lemma \ref{lem:rval} is via
classical distributional inequalities
that allow one to control the maximum process associated to certain martingales
in terms of the corresponding square functions; see Burkholder's survey \cite{Burk73}.
%We extend Lemma \ref{lem:rval} to Hilbert-space-valued martingales using the martingale dimension reduction
%of Kallenberg and Sztencel \cite{KS91} (see Lemma \ref{lem:KS}).  A version for
%The version for $p$-uniformly smooth Banach spaces relies on Pisier's martingale inequality \cite{Pisier75}.

The extension of Lemma \ref{lem:rval} to $p$-uniformly smooth Banach spaces can be deduced from
the real case using a ``dimension reduction''
lemma
for martingales in such spaces; this is inspired by
a Hilbert-space version due to Kallenberg and Sztencel \cite{KS91} (see Lemma \ref{lem:KS}).
Our dimension reduction arguments
appear in Section \ref{sec:unismooth} and may be of independent interest.
In Section \ref{sec:burk}, we present a more direct proof of Lemma \ref{cor-integrate-tail}
due to Adam Osekowski.
That proof avoids dimension reduction
and instead employs Pisier's martingale inequality for $p$-uniformly smooth Banach spaces \cite{Pisier75}.

\remove{
The proof we present is due to Adam Osekowski and simplifies
the argument appearing in our initial manuscript.
That argument employed a ``dimension reduction'' lemma
for martingales in
$p$-uniformly smooth Banach spaces inspired by
a Hilbert-space version due to Kallenberg and Sztencel \cite{KS91} (see Lemma \ref{lem:KS}).
Since this lemma may be of independent interest, we include it in Section \ref{sec:unismooth}.
}

\remove{
We extend Lemma \ref{lem:rval} to Hilbert-space-valued martingales using the martingale dimension reduction
of Kallenberg and Sztencel \cite{KS91} (see Lemma \ref{lem:KS}).  A version for
$p$-uniformly smooth Banach spaces requires a non-trivial extension of their technique appearing in Section \ref{sec:unismooth}.
The intuition that an analogous bound should hold for $p$-uniformly smooth spaces goes back to work of Pisier \cite{Pisier75},
who characterized uniform smoothness in terms of certain martingale inequalities.
We state our ``dimension reduction'' lemma here as it may be of independent interest.
We use `$\preceq$' to denote stochastic domination.

a non-trivial extension of their technique appearing in Section \ref{sec:unismooth}.
The intuition that an analogous bound should hold for $p$-uniformly smooth spaces goes back to work of Pisier \cite{Pisier75},
who characterized uniform smoothness in terms of certain martingale inequalities.
We state our ``dimension reduction'' lemma here as it may be of independent interest.
We use `$\preceq$' to denote stochastic domination.

\begin{lemma}\label{lem:smoothintro}
For $p \in (1,2]$, the following holds.
Let $\mathcal Z$ be a $p$-uniformly smooth Banach space and let $\{M_t\}$ be a $\mathcal Z$-valued martingale with respect
to the filtration $\{\mathcal F_t\}$.
Then there exists an $\mathbb R^2$-valued martingale $\{N_t\}$ and a constant $K > 0$ such that for any time $t \geq 0$, the following holds.
\begin{enumerate}
\item $\|M_t-M_0\|^p \preceq \|N_t-N_0\|_2^2$, and
\item $\|N_{t+1}-N_{t}\|_2^2 \preceq K \left(\vphantom{\bigoplus}\|M_{t+1}-M_t\|^p + \E\left[\|M_{t+1}-M_t\|^p\mid \mathcal F_{t-1}\right]\right),$
\end{enumerate}
where $K = O(S_p(\mathcal Z)^p)$.
\end{lemma}
}

With these tools in hand, the proof of Theorem \ref{thm:psmooth} is carried out in Section \ref{sec:threshold}.
In Section \ref{sec:embeddings}, we recall how one constructs threshold-embeddings into Hilbert space
using random partitions, yielding the proofs of Theorems \ref{thm:planar}, \ref{thm:doubling}, and \ref{thm:AN}.
Finally, in Section \ref{sec:kwapien} we discuss
the possibility of a non-linear version of Kwapien's theorem.

\section{Distributional inequalities for martingales}
\label{sec:burk}

We first recall a martingale inequality of Pisier \cite{Pisier75}.

\begin{lemma}\label{lem:pisier}
There is exists a constant $L > 0$ such that the following holds.
If $1 < p \leq 2$ and $\mathcal Z$ is a $p$-uniformly smooth Banach space,
then for any $\mathcal Z$-valued martingale $\{M_t\}_{t=0}^n$,
$$
\E\,\|M_n-M_0\|^p \leq L S_p(\mathcal Z) \sum_{t=0}^{n-1} \E\,\|M_{t+1}-M_t\|^p\,.
$$
\end{lemma}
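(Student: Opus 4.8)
The plan is to reduce the inequality to a single ``linearized'' smoothness estimate and then run the standard conditioning-and-telescoping induction on $n$. First I would establish that there is a constant $C = C(p)$ such that for all $x, y \in \mathcal Z$ and every norming functional $f_x \in \mathcal Z^*$ of $x$ (that is, $\|f_x\|_* = 1$ and $f_x(x) = \|x\|$),
$$
\|x+y\|^p \leq \|x\|^p + p\,\|x\|^{p-1} f_x(y) + C\,S_p(\mathcal Z)^p\,\|y\|^p\,.
$$
Granting this, set $d_t = M_t - M_{t-1}$ and $x_t = M_t - M_0$. Applying the estimate with $x = x_{t-1}$ (which is $\mathcal F_{t-1}$-measurable) and $y = d_t$, choosing the functional $f_{x_{t-1}}$ in an $\mathcal F_{t-1}$-measurable way, and then taking the conditional expectation given $\mathcal F_{t-1}$, the linear term contributes $p\,\|x_{t-1}\|^{p-1} f_{x_{t-1}}\bigl(\E[d_t \mid \mathcal F_{t-1}]\bigr) = 0$ since $\{M_t\}$ is a martingale. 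Hence $\E\,\|x_t\|^p \leq \E\,\|x_{t-1}\|^p + C\,S_p(\mathcal Z)^p\,\E\,\|d_t\|^p$, and summing over $t = 1, \ldots, n$ telescopes to the desired bound.

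The content is the linearized inequality, which I would derive directly from the definition of $\rho_{\mathcal Z}$. By homogeneity it suffices to treat $\|x\| = 1$ and rescale afterwards. For every $w \in \mathcal Z \setminus \{0\}$, the definition of $\rho_{\mathcal Z}$ applied to $x$ and $w/\|w\|$ gives $\|x+w\| + \|x-w\| \leq 2\bigl(1 + \rho_{\mathcal Z}(\|w\|)\bigr) \leq 2 + 2\,S_p(\mathcal Z)^p\,\|w\|^p$; combining this with the elementary lower bound $\|x - w\| \geq f_x(x - w) = 1 - f_x(w)$ coming from the norming functional yields the pointwise estimate $\|x + w\| \leq 1 + f_x(w) + 2\,S_p(\mathcal Z)^p\,\|w\|^p$ when $\|x\| = 1$, and rescaling gives $\|x+y\| \leq \|x\| + f_x(y) + 2\,S_p(\mathcal Z)^p\,\|y\|^p\,\|x\|^{1-p}$ for $x \neq 0$. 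To pass to $p$-th powers I would split according to whether $\|y\|/\|x\|$ exceeds an appropriate threshold of order $S_p(\mathcal Z)^{-p/(p-1)}$: above the threshold (and when $x = 0$) the crude bound $\|x+y\|^p \leq (\|x\|+\|y\|)^p$ together with $p\|x\|^{p-1} f_x(y) \geq -p\|y\|^p$ already suffices; below it one applies the scalar inequality $(1+s)^p \leq 1 + ps + C_p\bigl(s^2 + |s|^p\bigr)$, valid for $s \geq -1$ and $1 < p \leq 2$, with $1 + s = \|x+y\|/\|x\|$, and checks that in this regime the $s^2$ and $|s|^p$ error terms are dominated by $S_p(\mathcal Z)^p\,\|y\|^p$. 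Here one uses the universal lower bound $\rho_{\mathcal Z}(\eps) \geq \sqrt{1+\eps^2}-1$, which forces $S_p(\mathcal Z) \geq c_p > 0$ for an absolute $c_p$; this is exactly what lets all implicit constants be folded into $C(p)$.

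I expect the main obstacle to be precisely this linearized smoothness estimate: turning the two-sided bound furnished by $\rho_{\mathcal Z}$ into a one-sided bound with the correct linear term, and then into $p$-th powers, genuinely needs the scale-dependent case split, because for $p < 2$ the quadratic error appearing for small increments is \emph{not} controlled by $\E\,\|d_t\|^p$ — it is only the exact cancellation of the linear term under the conditional expectation, not any pointwise estimate, that makes the induction work. A secondary technical point is arranging a measurable selection $t \mapsto f_{x_{t-1}}$ of norming functionals; this is routine when $\mathcal Z$ is separable (to which one may reduce) and can otherwise be circumvented by working with a one-sided G\^{a}teaux derivative of the norm. Alternatively one could obtain the same inequality by passing to the dual: $\mathcal Z$ is $p$-uniformly smooth if and only if $\mathcal Z^*$ is $q$-uniformly convex (with $1/p + 1/q = 1$ and comparable constants), and the martingale inequality then follows by duality from the corresponding lower bound for $\mathcal Z^*$-valued martingales. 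Either way, once the pointwise estimate is in hand the conditioning-and-telescoping step is immediate and produces $\E\,\|M_n - M_0\|^p \leq C(p)\,S_p(\mathcal Z)^p \sum_{t=0}^{n-1} \E\,\|M_{t+1} - M_t\|^p$, which is the asserted bound.
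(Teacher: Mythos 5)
The paper does not prove Lemma~\ref{lem:pisier}; it simply cites Pisier \cite{Pisier75} and then uses the inequality as a black box. Your proposal supplies an actual proof, and it is the standard one, which I believe is essentially Pisier's original argument: establish the one-sided ``linearized'' smoothness estimate
$\|x+y\|^p \le \|x\|^p + p\|x\|^{p-1} f_x(y) + C\,S_p(\mathcal Z)^p\,\|y\|^p$,
then exploit the exact cancellation of the linear term under conditional expectation and telescope. Your reasoning is sound: the derivation of the first-order inequality from the definition of $\rho_{\mathcal Z}$ and a norming functional is correct, and you correctly identify the two genuine technical points --- (a) passing to $p$-th powers requires a case split on $\|y\|/\|x\|$, using the Nordlander lower bound $\rho_{\mathcal Z}(\e)\ge\sqrt{1+\e^2}-1$ so that $S_p(\mathcal Z)$ is bounded below by a universal constant and the error terms can be absorbed; and (b) measurable selection of norming functionals, handled by restricting to a separable subspace or using a one-sided G\^{a}teaux derivative. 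The duality route via $q$-uniform convexity of $\mathcal Z^*$ that you mention is a legitimate alternative.

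It is worth noting that the pointwise linearized estimate you prove is exactly property~(iii) of the duality map $J_z$ which the paper invokes, again without proof, in Section~\ref{sec:unismooth} (with citations to \cite{XR91,Chidume09}); so even there your argument fills in something the paper treats as known. Finally, a small discrepancy: the statement as printed reads $L\,S_p(\mathcal Z)$, but your argument (and the way the lemma is applied in the proof of Lemma~\ref{lem-Osekowski}, where the output is $O(S_p(\mathcal Z)^p)$) both give $L\,S_p(\mathcal Z)^p$; the exponent appears to be a typo in the paper's statement, and your version is the correct one.
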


In this section, we will write $a \vee b$ for $\max(a,b)$.
The following is a key estimate.

\begin{lemma}\label{lem-Osekowski}
For $1<p\leq 2$, let $\mathcal Z$ be a $p$-uniformly smooth Banach space, and let $\{M_t\}_{t=0}^n$ be a martingale on $\mathcal{Z}$ with respect to the filtration $\{\mathcal F_t\}$.
Denote $M^* = \max_{1\leq t\leq n}\|M_t -M_0\|$,  $\Delta^* = \max_{1\leq t\leq n} \|M_t- M_{t-1}\|$, and $$\Gamma = \left(\sum_{t=1}^n \E \left[\|M_t - M_{t-1}\|^p \mid \mathcal F_{t-1}\right]\right)^{1/p}\,.$$
Then for each $\lambda >0$, $\beta>1$ and $\delta\in(0, \beta -1)$, we have
$$\P(M^* \geq \beta \lambda) \leq \left(\frac{K \delta}{\beta - \delta - 1}\right)^p \P(M^* \geq \lambda) + \P(\Gamma \vee \Delta^* \geq \delta \lambda)\,,$$
where  $K = O(S_p(\mathcal Z))$.
\end{lemma}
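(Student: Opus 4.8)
The plan is to use a good-$\lambda$ inequality argument in the style of Burkholder, combined with Pisier's martingale inequality (Lemma \ref{lem:pisier}) applied to a stopped martingale. The key point is that on the event $\{M^* \geq \beta\lambda\}$ while simultaneously $\{\Gamma \vee \Delta^* < \delta\lambda\}$, the martingale must travel a distance at least $(\beta-1-\delta)\lambda$ between a first hitting time of level $\lambda$ and the terminal time, and during that stretch both its conditional $p$-th moment sum and its largest single jump are controlled by $\delta\lambda$.

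First I would introduce two stopping times: $\sigma = \inf\{t : \|M_t - M_0\| \geq \lambda\}$ and $\mu = \inf\{t : \Gamma_t \vee \Delta^*_t \geq \delta\lambda\}$, where $\Gamma_t = \bigl(\sum_{s=1}^t \E[\|M_s - M_{s-1}\|^p \mid \mathcal F_{s-1}]\bigr)^{1/p}$ and $\Delta^*_t = \max_{1 \leq s \leq t}\|M_s - M_{s-1}\|$, both of which are stopping times since $\Gamma_t$ is $\mathcal F_{t-1}$-measurable and $\Delta_t^*$ is $\mathcal F_t$-measurable (here one should be a little careful and possibly look one step ahead for $\mu$, so that the stopped square-function stays below $\delta\lambda$; this is a routine adjustment costing only constants). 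Consider the martingale $Y_t = M_{t \wedge \mu} - M_{\sigma \wedge \mu}$ for $t \geq \sigma \wedge \mu$ (formally, work with the filtration shifted to start at $\sigma \wedge \mu$, or equivalently define $Y_t = M_{(\sigma \vee t)\wedge \mu} - M_{\sigma\wedge\mu}$). On the event $A = \{M^* \geq \beta\lambda\} \cap \{\Gamma \vee \Delta^* < \delta\lambda\}$ we have $\mu = \infty$ (never stopped), so $\sigma \leq n$ and $\|M_n - M_{\sigma}\| \geq \|M_n - M_0\| - \|M_\sigma - M_0\| \geq \beta\lambda - (\lambda + \delta\lambda) = (\beta - 1 - \delta)\lambda$ — using that the jump at time $\sigma$ has size at most $\Delta^* < \delta\lambda$, so $\|M_\sigma - M_0\| < \lambda + \delta\lambda$. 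Hence $A \subseteq \{\sigma \leq n\} \cap \{\max_t \|Y_t\| \geq (\beta-1-\delta)\lambda\}$.

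Next I would apply Markov's inequality and Pisier's inequality to the martingale $\{Y_t\}$, noting that $Y_0 = 0$ and that $Y$ is itself a $\mathcal Z$-valued martingale with respect to the appropriate filtration. Using Doob's maximal inequality in its $L^p$ form (valid for $p > 1$) together with Lemma \ref{lem:pisier} gives, up to a constant depending on $p$ only,
$$
\E\Bigl[\max_{t}\|Y_t\|^p \,\Big|\, \mathcal F_{\sigma\wedge\mu}\Bigr] \leq C_p \, L\, S_p(\mathcal Z)^p \sum_{t > \sigma\wedge\mu} \E\bigl[\|M_{t\wedge\mu} - M_{(t-1)\wedge\mu}\|^p \mid \mathcal F_{\sigma\wedge\mu}\bigr] \leq C_p\, L\, S_p(\mathcal Z)^p\, (\delta\lambda)^p\,,
$$
since the stopped conditional square function $\Gamma_{\mu}^p$ is at most $(\delta\lambda)^p$ by definition of $\mu$. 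Combining this with $A \subseteq \{\sigma \leq n\}$, conditioning on $\mathcal F_{\sigma\wedge\mu}$, and using that $\{\sigma \leq n\} \cap \{\mu = \infty\} \subseteq \{\sigma \wedge \mu = \sigma < \infty\}$, a conditional Markov bound yields
$$
\P(A) \leq \P\Bigl(\sigma \leq n,\ \max_t\|Y_t\| \geq (\beta-1-\delta)\lambda\Bigr) \leq \frac{C_p\, L\, S_p(\mathcal Z)^p\, (\delta\lambda)^p}{((\beta-1-\delta)\lambda)^p}\,\P(\sigma \leq n) = \left(\frac{K\delta}{\beta-1-\delta}\right)^p \P(M^* \geq \lambda)\,,
$$
with $K = (C_p L)^{1/p} S_p(\mathcal Z) = O(S_p(\mathcal Z))$, and where in the last step $\P(\sigma \leq n) = \P(M^* \geq \lambda)$. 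Finally, $\P(M^* \geq \beta\lambda) = \P(A) + \P\bigl(\{M^* \geq \beta\lambda\} \cap \{\Gamma\vee\Delta^* \geq \delta\lambda\}\bigr) \leq \P(A) + \P(\Gamma\vee\Delta^* \geq \delta\lambda)$, which gives the claimed inequality.

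The main obstacle I anticipate is the careful handling of the stopping time $\mu$ and the measurability/look-ahead issues: one must ensure that the stopped martingale $Y$ has its conditional square function genuinely bounded by $(\delta\lambda)^p$ and its increments well-defined, which may require stopping "just before" $\Gamma$ or $\Delta^*$ crosses the threshold rather than at the crossing time, and then tracking how the extra single jump (of size $\leq \Delta^* < \delta\lambda$) is absorbed into the constants. A secondary technical point is justifying the conditional form of Pisier's inequality and Doob's inequality on the shifted filtration; this is standard but should be stated cleanly. Everything else — the geometric decomposition of the event and the final Markov step — is routine.
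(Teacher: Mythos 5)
Your plan follows the same good-$\lambda$ strategy as the paper's proof: the same stopping-time cutoffs (first hit of $\lambda$, first exceedance of $\delta\lambda$ by the jump or square function), the same geometric decomposition of $\{M^* \geq \beta\lambda\}$, and Pisier's inequality applied to the stopped piece. The main implementation difference is that the paper avoids any conditional form of Pisier or Doob by working with the martingale transform $H_t = \sum_{j=1}^t \1_{\{\mu < j \leq \nu\wedge\sigma\wedge n\}}(M_j - M_{j-1})$: the predictable indicator makes $\{H_t\}$ a genuine martingale started at $0$, so the unconditional Pisier inequality and the weak-type Doob inequality $\P(\max_t \|H_t\| \geq a) \leq a^{-p}\,\E\|H_n\|^p$ apply directly. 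This is cleaner than your shifted/conditioned argument and also dodges the extra $(p/(p-1))$-type constant coming from the strong $L^p$ Doob inequality. The ``look one step ahead'' adjustment you anticipated is exactly what the paper does: the cutoff time is $\sigma = \min\{t : \|M_t-M_{t-1}\| \vee \Gamma(t+1) \geq \delta\lambda\}$, with the $\mathcal F_t$-measurable look-ahead $\Gamma(t+1)$, which guarantees the transform's conditional square function stays strictly below $\delta\lambda$. One small slip in your write-up: from $M^* \geq \beta\lambda$ you cannot deduce $\|M_n-M_0\| \geq \beta\lambda$, so the displayed lower bound on $\|M_n - M_\sigma\|$ should instead be stated at the first hitting time of level $\beta\lambda$; your final inclusion $A \subseteq \{\max_t\|Y_t\| \geq (\beta-1-\delta)\lambda\}$ is the correct statement and is what you actually use. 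With these routine repairs the argument is sound.
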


\begin{proof}
We may assume that $M_0 = 0$.
For $1\leq \ell\leq n$, write $\Gamma(\ell) = (\sum_{t=1}^\ell \E [\|M_t - M_{t-1}\|^p \mid \mathcal F_{t-1})]^{1/p}$.
With the convention that $\min \emptyset = \infty$ and $\Gamma(n+1) = \Gamma(n)$, define the stopping times
\begin{align*}
\mu &= \min\{1\leq t\leq n: \|M_t\| \geq \lambda\}\,, \\
\nu &= \min\{1\leq t\leq n: \|M_t\| \geq \beta \lambda\}\,,\\
\sigma & = \min\{1\leq t\leq n: \|M_t - M_{t-1}\| \vee \Gamma(t+1) \geq \delta \lambda\}\,.
\end{align*}
Note that $\sigma$ is a stopping time since $\Gamma(t)$ is measurable with respect to $\mathcal F_t$.
In addition, define $$H_t = \sum_{j=1}^t \mathbf{1}_{\{\mu<j\leq \nu\wedge\sigma\wedge n\}} (M_j - M_{j-1})\,,$$
where we have used $\1_E$ to denote the indicator of the event $E$.

Observe that $\{H_t\}$ is also a martingale. By Doob's maximal inequality (see, e.g., \cite[\S 4.4]{Durett96}), we have
\begin{align}
\P(M^* \geq \beta \lambda, \Gamma \vee \Delta^* \leq \delta \lambda) &= \P(\mu \leq \nu \leq n, \sigma = \infty) \nonumber\\
& \leq \P\left(\max_{1\leq t\leq n} \|H_t\| \geq (\beta - 1 - \delta)\lambda\right) \nonumber \\
& \leq
\frac{\E\, \|H_n\|^p}{((\beta - 1- \delta) \lambda)^p}\,.\label{eq-adam-1}
\end{align}
Applying Lemma \ref{lem:pisier}, one obtains
\begin{align*}\E\, \|H_n\|^p &\leq O(S_p(\mathcal{Z})^p)\, \E \left(\sum_{t=1}^n \E\left[\|H_t - H_{t-1}\|^p \mid \mathcal F_{t-1}\right] \right) \\
 &\leq O(S_p(\mathcal{Z})^p)\, \E \left[\Gamma(\sigma)^p \mathbf{1}_{\mu < \infty}\right]\\
& \leq O(S_p(\mathcal{Z})^p) \,(\delta \lambda)^p \,\P(M^* \geq \lambda)\,.
\end{align*}
Combining this with \eqref{eq-adam-1}, the desired conclusion follows.
\end{proof}

The preceding lemma leads to the following consequence.
\begin{lemma}\label{cor-integrate-tail}
For $p \in (1,2]$, let $\mathcal Z$ be a $p$-uniformly smooth Banach space.
There exists a constant $C = O(S_p(\mathcal Z)^p)$ such that the following holds.
Let $\{\mathcal F_t\}$ be a filtration, and let $\{\alpha_t\}$ be a sequence adapted to $\{\mathcal F_t\}$.
Let $$\left\{\vphantom{\bigoplus}\{M_t^{\xi}\} : \xi \in I\right\}$$ be a countable collection of $\mathcal Z$-valued
martingales with respect to $\{\mathcal F_t\}$ such that
$\|M^{\xi}_t - M^{\xi}_{t-1}\| \leq \alpha_t$ for all $t \geq 1$.  Then for every $n \geq 0$, we have
$$\int_0^\infty y^{p-1} \sup_{\xi\in I} \P(\|M_n^\xi - M_0^{\xi}\| \geq y)\, dy \leq C\sum_{t=1}^n \E (\alpha_t^p)\,.$$
\end{lemma}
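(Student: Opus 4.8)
The strategy is to integrate the tail bound from Lemma~\ref{lem-Osekowski} against the measure $y^{p-1}\,dy$ and absorb the recursive term on the left-hand side. First I would observe that the estimate in Lemma~\ref{lem-Osekowski} is uniform over the collection: since every $\{M_t^\xi\}$ has increments bounded by $\alpha_t$, for each $\xi$ we have $\Delta^{*,\xi} = \max_t \|M_t^\xi - M_{t-1}^\xi\| \le \max_t \alpha_t$ and $\Gamma^\xi \le \left(\sum_{t=1}^n \E[\alpha_t^p \mid \mathcal F_{t-1}]\right)^{1/p}$, so that the ``bad event'' $\{\Gamma^\xi \vee \Delta^{*,\xi} \ge \delta\lambda\}$ is contained in a single event $B_\lambda$ not depending on $\xi$. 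Writing $F(\lambda) = \sup_{\xi \in I} \P(M^{*,\xi} \ge \lambda)$ — which is still measurable, being a countable supremum — Lemma~\ref{lem-Osekowski} gives, for fixed $\beta > 1$ and $\delta \in (0,\beta-1)$,
\[
F(\beta\lambda) \le \left(\frac{K\delta}{\beta - \delta - 1}\right)^p F(\lambda) + \P(B_\lambda)\,,
\]
where $B_\lambda = \{ A \vee B \ge \delta\lambda\}$ with $A = \max_{1\le t\le n}\alpha_t$ and $B = (\sum_{t=1}^n \E[\alpha_t^p \mid \mathcal F_{t-1}])^{1/p}$.

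Next I would multiply by $y^{p-1}$ and integrate. Substituting $y = \beta\lambda$ on the left,
\[
\int_0^\infty y^{p-1} F(y)\,dy = \beta^p \int_0^\infty \lambda^{p-1} F(\beta\lambda)\,d\lambda
\le \beta^p \left(\frac{K\delta}{\beta-\delta-1}\right)^p \int_0^\infty \lambda^{p-1}F(\lambda)\,d\lambda + \beta^p \int_0^\infty \lambda^{p-1}\P(B_\lambda)\,d\lambda\,.
\]
The key point is to choose $\beta$ large and $\delta$ small so that the coefficient $c := \beta^p (K\delta/(\beta-\delta-1))^p$ is strictly less than $1$ — e.g.\ fix $\beta = 2$ and take $\delta$ a small enough constant depending only on $K$ (hence on $S_p(\mathcal Z)$); this is possible since $K\delta/(\beta-\delta-1) \to 0$ as $\delta \to 0$. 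Then, provided the integral $\int_0^\infty y^{p-1}F(y)\,dy$ is finite, I can subtract it from both sides to get
\[
(1-c)\int_0^\infty y^{p-1}F(y)\,dy \le \beta^p \int_0^\infty \lambda^{p-1}\P(B_\lambda)\,d\lambda\,.
\]
The right-hand integral is evaluated by the layer-cake formula: $\int_0^\infty \lambda^{p-1}\P(A \vee B \ge \delta\lambda)\,d\lambda = \delta^{-p}\,\E[(A\vee B)^p]/p \le \delta^{-p}(\E A^p + \E B^p)/p$. Finally $\E A^p = \E \max_t \alpha_t^p \le \sum_{t=1}^n \E\alpha_t^p$ trivially, and $\E B^p = \E \sum_{t=1}^n \E[\alpha_t^p \mid \mathcal F_{t-1}] = \sum_{t=1}^n \E\alpha_t^p$ by the tower property, so both are bounded by $\sum_{t=1}^n \E\alpha_t^p$. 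Collecting constants, all of which depend only on $p$ and $K = O(S_p(\mathcal Z))$, yields the claimed bound with $C = O(S_p(\mathcal Z)^p)$.

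The one genuine subtlety — and the step I expect to need the most care — is justifying that $\int_0^\infty y^{p-1}F(y)\,dy$ is finite before performing the subtraction, since otherwise the ``absorb and subtract'' move is vacuous. This can be handled by a truncation argument: replace $F$ by $F_N(y) = F(y)\mathbf 1_{\{y \le N\}}$, run the same inequality chain (the substitution $y=\beta\lambda$ only enlarges the truncation level, which is harmless after bounding $\mathbf 1_{\{\beta\lambda \le N\}} \le \mathbf 1_{\{\lambda \le N\}}$), obtain a bound on $\int_0^N y^{p-1}F(y)\,dy$ independent of $N$, and let $N\to\infty$ by monotone convergence. Alternatively, one notes that each individual $M_n^\xi$ has $\E\|M_n^\xi - M_0^\xi\|^p < \infty$ by Lemma~\ref{lem:pisier}, but since the supremum is over an infinite index set, the truncation route is the clean way to make the interchange rigorous. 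Everything else is the routine bookkeeping of constants indicated above.
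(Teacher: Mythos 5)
Your argument is essentially the paper's proof: apply Lemma~\ref{lem-Osekowski} to each $\xi$, bound $\Gamma_\xi \vee \Delta^*_\xi$ uniformly by $\Gamma_\alpha \vee \alpha^*$, take the supremum over $\xi$, integrate against $\lambda^{p-1}\,d\lambda$, and absorb the recursive term on the left by picking $\beta$ and $\delta$ so that $\beta^p\bigl(K\delta/(\beta-\delta-1)\bigr)^p < 1$. The one thing you do that the paper does not is flag (and resolve via truncation) the need to know $\int_0^\infty y^{p-1}F(y)\,dy < \infty$ before the absorb-and-subtract step; this is a genuine refinement of the written argument rather than a departure from it.
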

\begin{proof}
For $\xi\in I$, we make the definitions
 \begin{eqnarray*}
 M^*_\xi &=& \max_{1\leq t\leq n} \|M_t^\xi- M_0^\xi\| \\
 \Delta^*_\xi &=& \max_{1\leq t\leq n} \|M_t^\xi - M_{t-1}^\xi\| \\
 \Gamma_\xi &=& \left(\sum_{t=1}^n \E\left[\|M_t^\xi - M_{t-1}^\xi\|^p \mid \mathcal F_{t-1}\right]\right)^{1/p}\,.
 \end{eqnarray*}
Applying Lemma~\ref{lem-Osekowski} to each martingale $\{M_t^\xi\}$, we obtain
$$\P(M^*_\xi \geq \beta \lambda) \leq \left(\frac{K\delta}{\beta - \delta -1}\right)^p \P(M^*_\xi \geq \lambda) + \P(\Gamma_{\xi} \vee \Delta_\xi^* \geq \delta \lambda)\,.$$
Write $\Gamma_\alpha = (\sum_{t=1}^n \E (\alpha_t^p\mid \mathcal F_{t-1}))^{1/p}$ and $\alpha^* = \max_{1\leq t\leq n} \|\alpha_t\| $. Since $\Gamma_\xi \vee \Delta^*_{\xi} \leq \Gamma_\alpha \vee \alpha^*$, we have
$$\P(M^*_\xi \geq \beta \lambda) \leq \left(\frac{K\delta}{\beta - \delta -1}\right)^p \P(M^*_\xi \geq \lambda) + \P(\Gamma_{\alpha} \vee \alpha^* \geq \delta \lambda)\,.$$
Therefore,
$$\sup_{\xi\in I}\P(M^*_\xi \geq \beta \lambda) \leq \left(\frac{K\delta}{\beta - \delta -1}\right)^p \sup_{\xi\in I}\P(M^*_\xi \geq \lambda) + \P(\Gamma_{\alpha} \vee \alpha^* \geq \delta \lambda)\,.$$
Multiplying both sides by $\beta^p \lambda^{p-1}$ and integrating in $\lambda$, we obtain
$$\left(1 - \left(\frac{\beta \delta K}{\beta -1 -\delta}\right)^p\right) \int_0^\infty \sup_{\xi \in I} \P(M_\xi^* \geq \lambda)\, d\lambda \leq \frac{\beta^p}{\delta^p} \E \left[ (\Gamma_\alpha\vee \alpha^*)^p\right] \leq  \frac{2\beta^p}{\delta^p} \E \left[\Gamma_\alpha^p\right]\,.$$
Choosing $\beta = 5$ and $\delta = (2 (K \vee 1))^{-1}$, this completes the proof of the lemma.
\end{proof}

\section{Markov type and threshold embeddings}

With Lemma \ref{cor-integrate-tail} in hand, we are ready to prove our main theorem.
We first review the decomposition of a Markov chain on a normed
space into a pair of martingales.  Then in Section \ref{sec:threshold}, we
relate Markov type and threshold embeddings into uniformly smooth spaces.
Finally, in Section \ref{sec:embeddings}, we use this to endow certainly
families of metric spaces with Markov type 2.

\subsection{The martingale decomposition}
\label{sec:decomp}

Let $\mathcal Z$ be a normed space, and let $\{Z_s\}_{s=0}^{\infty}$ be a stationary, reversible Markov chain on a finite
state space $\Omega$.   Consider any $f : \Omega \to \mathcal Z$ and $t \in \mathbb N$.
Define the martingales $\{M_s\}_{s=0}^t$ and $\{N_s\}_{s=0}^{t}$ by $M_0 = f(Z_0)$ and $N_0 = f(Z_t)$ and
for $0 \leq s \leq t-1$,
\begin{eqnarray}
M_{s+1} - M_s &\defeq & f(Z_{s+1}) -  f(Z_s) - \E \left[f(Z_{s+1})-f(Z_s) \mid Z_s\right] \label{eq:Mdef} \\
N_{s+1} - N_s &\defeq & f(Z_{t-s-1}) -  f(Z_{t-s}) - \E \left[f(Z_{t-s-1})-f(Z_{t-s}) \mid Z_{t-s}\right]\,. \label{eq:Ndef}
\end{eqnarray}

Observe that $\{M_s\}$ is a martingale with respect to the filtration induced on $\{Z_0, Z_1, \ldots, Z_t\}$
and $\{N_s\}$ is a martingale with respect to the filtration induced on $\{Z_t, Z_{t-1}, \ldots, Z_0\}$.
For every $1 \leq s \leq t-1$, one can use stationarity and reversibility to verify that
\begin{equation}\label{eq:parity}
f(Z_{s+1}) - f(Z_{s-1}) = (M_{s+1} - M_{s-1}) - (N_{t-s+1}-N_{t-s})\,,
\end{equation}
since $\E[f(Z_{s+1}) \mid Z_s] = \E[f(Z_{s-1}) \mid Z_s]$.

We also define the martingales $\{A_k\}_{0 \leq k \leq t/2}$ and $\{B_k\}_{0 \leq k \leq t/2}$ by
\begin{eqnarray*}
A_k &\defeq & \sum_{s=1}^k M_{2s} - M_{2s-1} \\
B_k &\defeq & \sum_{s=1}^k N_{2s} - N_{2s-1}\,.
\end{eqnarray*}
If we assume that $t$ is even, then summing \eqref{eq:parity} over $s=1,3,5,\ldots,t/2-1$ yields
\begin{equation}\label{eq:AB}
f(Z_t) - f(Z_{0}) = A_{t/2} - B_{t/2}\,.
\end{equation}
Finally, we observe that for any $1 \leq s \leq t/2$ and $p \geq 1$, we have the inqualities
\begin{eqnarray}
\!\!\!\!\!\!\!\|A_{s}-A_{s-1}\|^p  &\leq& 2^{p-1} \|f(Z_{2s})-f(Z_{2s-1})\|^p \label{eq:Asteps}
+ 2^{p-1} \E \left[\vphantom{\bigoplus} \|f(Z_{2s})-f(Z_{2s-1})\|^p \mid Z_{2s-1} \right] \\
\!\!\!\!\!\!\!\|B_{s}-B_{s-1}\|^p  &\leq& 2^{p-1} \|f(Z_{t-2s+1})-f(Z_{t-2s})\|^p
+ 2^{p-1} \E \left[\vphantom{\bigoplus} \|f(Z_{t-2s+1})-f(Z_{t-2s})\|^p \mid Z_{t-2s} \right] \label{eq:Bsteps}
\end{eqnarray}
These follow from $A_s - A_{s-1} = M_{2s} - M_{2s-1}$ and $B_s - B_{s-1} = N_{2s} - N_{2s-1}$
along with the definitions \eqref{eq:Mdef} and \eqref{eq:Ndef}.

\remove{
\bigskip

For each $j \in \mathbb Z$, we know there are martingales $\{M^{(j)}_t\}$ and $\{N^{(j)}_t\}$
such that
$$
f_j(Z_0) - f_j(Z_{2t}) = \sum_{k=1}^t (M^{(j)}_{2k} - M^{(j)}_{2k-1}) - \sum_{k=1}^t (N^{(j)}_{2k} - N^{(j)}_{2k-1})\,.
$$
Let $A^{(j)}_s = \sum_{k=1}^s (M^{(j)}_{2k} - M^{(j)}_{2k-1})$ and
$B^{(j)}_s = \sum_{k=1}^s (N^{(j)}_{2k} - N^{(j)}_{2k-1})$.
Note that $A^{(j)}_s$ is a martingale with respect to $\{Z_0,Z_2, \ldots, Z_{2s-1}\}$
and $B^{(j)}_s$ is a martingale with respect to $\{Z_{2t-2s+1}, Z_{2t-2s+2}, \ldots, Z_{2t}\}$.

Also, observe that
\begin{eqnarray*}
A^{(j)}_s - A^{(j)}_{s-1} &=& M^{(j)}_{2s} - M^{(j)}_{2s-1} \\
B^{(j)}_s - B^{(j)}_{s-1} &=& N^{(j)}_{2s} - N^{(j)}_{2s-1}\,. \\
\end{eqnarray*}
}

\subsection{Threshold embeddings}
\label{sec:threshold}

\begin{theorem}\label{thm:main}
For $p \in (1,2]$, let $\mathcal Z$ be a $p$-uniformly smooth Banach space.
If $(X,d)$ is a metric space
that threshold-embeds into $\mathcal Z$, then $X$ has Markov type $p$.
Quantitatively, if $X$ $D$-threshold-embeds into $\mathcal Z$, then
$M_p(X) \leq O(D S_p(\mathcal Z))$.
\end{theorem}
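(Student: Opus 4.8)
The plan is to run the martingale decomposition of Section~\ref{sec:decomp} \emph{simultaneously} for the whole family of (rescaled) threshold embeddings, then to invoke Lemma~\ref{cor-integrate-tail} to bound all of the resulting martingales at once, and finally to pass back to the metric $d$ by a layer-cake computation. Fix a stationary reversible Markov chain $\{Z_s\}_{s=0}^\infty$ on a finite set $\Omega$, a map $g\colon\Omega\to X$, and a time $t\in\mathbb{N}$, and write $\epsilon^p=\E\,d(g(Z_0),g(Z_1))^p$. It suffices to treat the case $t=2m$ even: the odd case follows from the even one by writing $d(g(Z_t),g(Z_0))\le d(g(Z_t),g(Z_1))+d(g(Z_1),g(Z_0))$ and using stationarity, at the cost of an absolute constant. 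Since $\Omega$ is finite, only the finite set of scales $S=\{d(x,y):x,y\in g(\Omega),\ x\ne y\}$ is relevant; for each $\tau\in S$ we rescale $\varphi_\tau$ so that $\|\varphi_\tau\|_{\Lip}=1$ (the threshold inequality is scale-invariant, and these maps are non-constant since otherwise the notion is vacuous), and we set $f_\tau=\varphi_\tau\circ g\colon\Omega\to\mathcal{Z}$.

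Applying \eqref{eq:Mdef}--\eqref{eq:AB} with each $f_\tau$ in the role of $f$ gives martingales $\{A^{(\tau)}_k\}$ and $\{B^{(\tau)}_k\}$ with $A^{(\tau)}_0=B^{(\tau)}_0=0$ and $f_\tau(Z_{2m})-f_\tau(Z_0)=A^{(\tau)}_m-B^{(\tau)}_m$; the key point is that, \emph{for every} $\tau\in S$, the process $\{A^{(\tau)}_k\}$ is a martingale for the same forward filtration $\mathcal F^A_k=\sigma(Z_0,\dots,Z_{2k})$ and $\{B^{(\tau)}_k\}$ for the same backward filtration $\mathcal F^B_k=\sigma(Z_{2m},\dots,Z_{2m-2k})$. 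Because each $\varphi_\tau$ is $1$-Lipschitz, the increment bounds \eqref{eq:Asteps}--\eqref{eq:Bsteps} are all dominated by one sequence that is \emph{independent of} $\tau$: putting
\[
(\alpha^A_s)^p=2^{p-1}\Big(d(g(Z_{2s}),g(Z_{2s-1}))^p+\E\big[d(g(Z_{2s}),g(Z_{2s-1}))^p\mid Z_{2s-1}\big]\Big)
\]
and the analogous $(\alpha^B_s)^p$ with $2m-2s,\,2m-2s+1$ in place of $2s-1,\,2s$, we get $\|A^{(\tau)}_s-A^{(\tau)}_{s-1}\|\le\alpha^A_s$ and $\|B^{(\tau)}_s-B^{(\tau)}_{s-1}\|\le\alpha^B_s$ for all $\tau\in S$, with $\{\alpha^A_s\}$ adapted to $\{\mathcal F^A_s\}$ and $\{\alpha^B_s\}$ to $\{\mathcal F^B_s\}$, and stationarity gives $\sum_{s=1}^m\E(\alpha^A_s)^p=\sum_{s=1}^m\E(\alpha^B_s)^p=2^p m\,\epsilon^p$. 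Feeding the finite collection $\{\{A^{(\tau)}_k\}:\tau\in S\}$ into Lemma~\ref{cor-integrate-tail}, and then $\{\{B^{(\tau)}_k\}:\tau\in S\}$, yields with $C=O(S_p(\mathcal{Z})^p)$
\[
\int_0^\infty y^{p-1}\sup_{\tau\in S}\P\big(\|A^{(\tau)}_m\|\ge y\big)\,dy\le 2^pC\,m\,\epsilon^p\,,
\]
and the same bound with $B$ in place of $A$.

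To conclude I would write $\E\,d(g(Z_0),g(Z_{2m}))^p=\int_0^\infty p\,\tau^{p-1}\,\P\big(d(g(Z_0),g(Z_{2m}))\ge\tau\big)\,d\tau$. For $\tau>0$, set $\tau_*=\min\{\sigma\in S:\sigma\ge\tau\}$, so that $\{d(g(Z_0),g(Z_{2m}))\ge\tau\}=\{d(g(Z_0),g(Z_{2m}))\ge\tau_*\}$; on this event the threshold property at scale $\tau_*$ gives $\|f_{\tau_*}(Z_0)-f_{\tau_*}(Z_{2m})\|\ge\tau_*/D$, and since $f_{\tau_*}(Z_{2m})-f_{\tau_*}(Z_0)=A^{(\tau_*)}_m-B^{(\tau_*)}_m$ and $\tau_*\ge\tau$, a union bound gives
\[
\P\big(d(g(Z_0),g(Z_{2m}))\ge\tau\big)\le\sup_{\sigma\in S}\P\big(\|A^{(\sigma)}_m\|\ge\tfrac{\tau}{2D}\big)+\sup_{\sigma\in S}\P\big(\|B^{(\sigma)}_m\|\ge\tfrac{\tau}{2D}\big)\,.
\]
Multiplying by $p\,\tau^{p-1}$, integrating, and substituting $y=\tau/(2D)$ turns the right side into $p(2D)^p$ times the two integrals estimated above, so $\E\,d(g(Z_0),g(Z_{2m}))^p\le p\,2^{2p+1}C\,D^p m\,\epsilon^p=O\big(S_p(\mathcal{Z})^pD^p\big)\cdot t\,\epsilon^p$, which (together with the reduction of the odd case) gives $M_p(X)=O(D\,S_p(\mathcal{Z}))$. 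I expect the crux to be the observation in the middle paragraph: letting the map in the decomposition vary over $\{f_\tau\}$ produces martingales that all live on the two fixed filtrations with increments controlled by a single common sequence $\{\alpha_s\}$ — exactly the form of hypothesis that Lemma~\ref{cor-integrate-tail} requires — while the layer-cake step is the only place the threshold inequality enters, and it is used one scale at a time.
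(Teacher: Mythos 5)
Your proof is correct and is essentially the paper's proof: you run the Section~\ref{sec:decomp} martingale decomposition with $f=\varphi_\tau\circ g$, observe that all the resulting $\{A^{(\tau)}\}$ (resp. $\{B^{(\tau)}\}$) are martingales for a single forward (resp. backward) filtration with increments dominated by a common adapted sequence, apply Lemma~\ref{cor-integrate-tail}, and close with a layer-cake integral; the only cosmetic difference is that you discretize over the finite set of actual distances $S$ appearing in $g(\Omega)$ rather than over dyadic scales $\{2^j\}_{j\in\mathbb Z}$ as the paper does, which changes nothing substantive since both give a countable family.
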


\begin{proof}
Let $\{\varphi_{\tau} : X \to \mathcal Z : \tau \geq 0\}$ be a
family of 1-Lipschitz mappings which satisfy $$d(x,y) \geq \tau \implies \|\varphi_{\tau}(x)-\varphi_{\tau}(y)\| \geq \tau/D$$
for all $x,y \in X$.

Consider a finite state space $\Omega$, a mapping $g : \Omega \to X$, and a stationary, reversible
Markov chain $\{Z_s\}_{s=0}^{\infty}$ on $\Omega$.
We assume that $Z_0$ is distributed according to the stationary measure.
Fix an even number $t=2u$.
For each $j \in \mathbb Z$, let $\{A^{(j)}_s\}$ and $\{B^{(j)}_s\}$ be
the martingales from Section \ref{sec:decomp} corresponding to the choice $f = \f_{2^j} \circ g : \Omega \to \mathcal Z$.

From \eqref{eq:AB}, we have $\f_{2^j}(g(Z_0)) - \f_{2^j}(g(Z_{t})) = A^{(j)}_{u} - B^{(j)}_{u}$.
Thus we can write
\begin{eqnarray*}
\E \,d(g(Z_0),g(Z_{t}))^p &=&  p \int_0^{\infty} \lambda^{p-1} \cdot \pr(d(g(Z_0), g(Z_{t})) \geq \lambda )\,d\lambda \\
&\leq & p \sum_{j\in \mathbb Z} 2^{(j+1)(p-1)+j} \cdot \pr(d(g(Z_0), g(Z_{t})) \geq 2^j ) \\
&\leq & p\sum_{j \in \mathbb Z} 2^{(j+1)(p-1)+j} \cdot \pr\left(\|\f_{2^j}(g(Z_0))-\f_{2^j}(g(Z_{t}))\| \geq \frac{2^j}{D}\right) \\
&=&
p\sum_{j \in \mathbb Z} 2^{(j+1)(p-1)+j} \cdot \pr\left(\|A^{(j)}_{u}-B^{(j)}_{u}\| \geq \frac{2^j}{D} \right) \\
&\leq &
p\sum_{j \in \mathbb Z} 2^{(j+1)(p-1)+j} \cdot \left[\pr\left(\|A^{(j)}_{u}\| > \frac{2^{j-1}}{D} \right) + \pr\left(\|B^{(j)}_{u}\| > \frac{2^{j-1}}{D} \right) \right]\\
&\leq&
p 2^{3p-1} D^p  \left(\int_{0}^{\infty} y^{p-1} \sup_{j \in \mathbb Z} \pr\left(\|A^{(j)}_u\| > y\right) \,dy
+
\int_{0}^{\infty} y^{p-1} \sup_{j \in \mathbb Z} \pr\left(\|B^{(j)}_u\| > y\right) \,dy\right)\,.
\end{eqnarray*}

Now define, for $1 \leq s \leq u$,
\begin{eqnarray*}
\alpha_s &\defeq& \left(\vphantom{\bigoplus} 2^{p-1}\,d(g(Z_{2s}), g(Z_{2s-1}))^p + 2^{p-1} \E[d(g(Z_{2s}), g(Z_{2s-1}))^p \mid Z_{2s-1}]\right)^{1/p} \\
\beta_s &\defeq&  \left(\vphantom{\bigoplus} 2^{p-1}\,d(g(Z_{t-2s+1}), g(Z_{t-2s}))^p + 2^{p-1} \E[d(g(Z_{t-2s+1}), g(Z_{t-2s}))^p \mid Z_{t-2s}]\right)^{1/p}\,,
\end{eqnarray*}
Then, using stationarity, we have the bounds
\begin{equation}\label{eq:stationary}
\E \left[\sum_{s=1}^{u} \alpha_s^p\right], \E \left[\sum_{s=1}^{u} \beta_s^p\right] \leq 4u \E\,d(g(Z_0),g(Z_1))^p = 2t \E\,d(g(Z_0),g(Z_1))^p\,.
\end{equation}
Additionally, by \eqref{eq:Asteps} and \eqref{eq:Bsteps} and the fact that $\f_{2^j}$ is 1-Lipschitz for each $j \in \mathbb Z$,
for the range $1 \leq s \leq u$, we have
\begin{eqnarray*}
\|A^{(j)}_s - A^{(j)}_{s-1}\| &\leq& \alpha_s \\
\|B^{(j)}_s - B^{(j)}_{s-1}\| &\leq& \beta_s \,.
\end{eqnarray*}

We can thus apply Lemma \ref{cor-integrate-tail} to conclude that
\begin{eqnarray*}
\int_{0}^{\infty} y^{p-1} \sup_{j \in \mathbb Z} \pr\left(\|A^{(j)}_u\| > y\right) \,dy &\leq& K \E\left[\sum_{s=1}^u \alpha_s^p\right] \\
\int_{0}^{\infty} y^{p-1} \sup_{j \in \mathbb Z} \pr\left(\|B^{(j)}_u\| > y\right) \,dy &\leq& K \E\left[\sum_{s=1}^u \beta_s^p\right],
\end{eqnarray*}
where $K = O(S_p(\mathcal Z)^{p/2})$.
Combining these estimates with \eqref{eq:stationary} and our previous discussion,
for every even time $t$, we have
$$
\E\left[\vphantom{\bigoplus}d(g(Z_0), g(Z_t))^p\right] \leq 4K p 2^{3p-1} D^p  t \,\E\left[\vphantom{\bigoplus} d(g(Z_0),g(Z_1))^p\right]\,.
$$

Finally, if $t$ is odd, then
\begin{eqnarray*}
\E\left[\vphantom{\bigoplus}d(g(Z_0), g(Z_t))^p\right] &\leq& 2^{p-1} \E\left[\vphantom{\bigoplus}d(g(Z_0), g(Z_{t-1}))^p\right] + 2^{p-1} \E\left[\vphantom{\bigoplus}d(g(Z_{t-1}), g(Z_t))^p\right] \\
&\leq &
8K  p 2^{3p-1} D^p  t \,\E\left[\vphantom{\bigoplus} d(g(Z_0),g(Z_1))^p\right]\,.
\end{eqnarray*}
Thus $(X,d)$ has Markov type $p$ with constant $M_p(X) = O(D S_p(\mathcal Z))$.
\end{proof}
\subsection{Random partitions}
\label{sec:embeddings}

We now recall that planar metrics, doubling metrics, and more general families
of metric spaces admit threshold embeddings into Hilbert space,
and therefore, by our main theorem, have Markov type 2.

Let $(X,d)$ be a metric space.
If $P$ is a partition of $X$ and $x \in X$,
we will write $P(x)$ for the unique set of $P$ containing $x$.
A {\em random partition of $X$} is a probability
space $(\Omega,\Sigma,\mu)$, together with
a mapping $\omega \mapsto P_{\omega}$ which
associates to every $\omega \in \Omega$
a partition $P_{\omega}$ of $X$.
We will use $\mathcal P$ to denote such a random partition.

To sidestep issues of measurability, we will assume that
every random partition
$\mathcal P$ is supported on only countably many partitions,
each of which is composed of only countably many sets of $X$.
This presents no difficulty because it will
hold in all the cases that arise.  We refer to \cite{LN06}
for a more detailed discussion.

We say that $\mathcal P$ is {\em $\Delta$-bounded}
if
$$
\P[\forall S \in \mathcal P, \mathrm{diam}(S) \leq \Delta] = \mu\left(\left\{\omega : \forall S \in P_{\omega}, \mathrm{diam}(S) \leq \Delta\right\}\right) = 1\,.
$$
We say that $\mathcal P$ is {\em $(\e,\delta,\Delta)$-padded} if $\mathcal P$ is $\Delta$-bounded, and for all $x \in X$,
$$
\P[B(x,\e \Delta) \subseteq \mathcal P(x)] = \mu\left(\left\{\omega : B(x,\e \Delta) \subseteq P_{\omega}(x) \right\}\right) \geq \delta\,,
$$
where $B(x,R) = \{ y \in X : d(x,y) \leq R \}$ denotes the closed ball around $x$.
The next theorem follows from the techniques of \cite{Rao99} and \cite{LMN05}.
The result is well-known, but we could not find the theorem stated
explicitly, so we prove it here.

\begin{theorem}\label{thm:partitions}
Consider a metric space $(X,d)$ and numbers $\e,\delta > 0$.  Suppose that
for every $\Delta > 0$, $X$ admits an $(\e,\delta,\Delta)$-padded random partition.
Then $(X,d)$ $K$-threshold embeds into Hilbert space, where $K \leq \frac{4}{\e\sqrt{\delta}}$.
\end{theorem}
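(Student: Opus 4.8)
The plan is to build the threshold embedding $\varphi_\tau$ from a padded random partition at the scale $\Delta = \tau/\e$, using the standard ``random-partition-to-Hilbert-space'' gadget. First I would fix $\tau > 0$ and let $\mathcal P$ be an $(\e,\delta,\Delta)$-padded random partition of $X$ with $\Delta = \tau/\e$; this is exactly the scale at which a ball of radius $\e\Delta = \tau$ around any point is likely (probability $\geq \delta$) to be swallowed by a single cluster. For a sample $\omega$, define $h_\omega(x) = d(x, X \setminus P_\omega(x))$, the distance from $x$ to the boundary of its own cluster; this is $1$-Lipschitz in $x$ for every $\omega$, and it is bounded by $\mathrm{diam}(P_\omega(x)) \leq \Delta$. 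To turn the family $\{h_\omega\}$ into a single map into a Hilbert space I would pass to $L_2(\Omega,\mu)$ (or, after the countability assumption, $\ell_2$ of the support), setting $\varphi_\tau(x)(\omega) = h_\omega(x) \cdot \sigma_\omega(x)$, where $\sigma_\omega(x)$ is a consistently-chosen sign assigned to each cluster of $P_\omega$ (e.g.\ an i.i.d.\ random $\pm1$ per cluster, or a fixed choice making the construction deterministic); the sign trick is what lets us lower-bound $\|\varphi_\tau(x) - \varphi_\tau(y)\|$ when $x$ and $y$ land in different clusters.

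The two things to check are the Lipschitz upper bound and the threshold lower bound. For the upper bound: fix $x,y$ with $d(x,y) = r$. For each $\omega$, either $x,y$ are in the same cluster, in which case $|h_\omega(x)\sigma_\omega(x) - h_\omega(y)\sigma_\omega(y)| = |h_\omega(x) - h_\omega(y)| \leq r$ by the $1$-Lipschitz property; or they are in different clusters, in which case $h_\omega(x) \leq d(x, X\setminus P_\omega(x)) \leq d(x,y) = r$ and likewise $h_\omega(y) \leq r$, so the coordinate difference is at most $|h_\omega(x)| + |h_\omega(y)| \leq 2r$. Hence $\|\varphi_\tau(x)-\varphi_\tau(y)\|_{L_2(\mu)} \leq 2r$, giving $\|\varphi_\tau\|_{\Lip} \leq 2$. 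For the lower bound: suppose $d(x,y) \geq \tau = \e\Delta$. Whenever the padding event $B(x,\e\Delta) \subseteq P_\omega(x)$ occurs, $y \notin B(x,\e\Delta)$ forces $y \notin P_\omega(x)$, so $x,y$ are in different clusters and $h_\omega(x) \geq d(x, X \setminus P_\omega(x)) \geq \e\Delta$ (since the whole ball of radius $\e\Delta$ is inside the cluster). On that event the coordinate satisfies $|h_\omega(x)\sigma_\omega(x) - h_\omega(y)\sigma_\omega(y)|^2 \geq h_\omega(x)^2 \geq (\e\Delta)^2$ when $\sigma_\omega(y) = -\sigma_\omega(x)$, or more robustly one uses $h_\omega(x) \geq \e\Delta$ together with $h_\omega(y)=0$ on the sub-event that $y$ lies on its cluster boundary relative to... — cleaner is to note $\max(h_\omega(x), h_\omega(y)) \geq \e\Delta$ and hence $|h_\omega(x) - h_\omega(y)|$ is large unless the two are comparable, which is where the per-cluster sign is genuinely needed: with signs, different clusters give $|h_\omega(x)\sigma_\omega(x) - h_\omega(y)\sigma_\omega(y)| = h_\omega(x) + h_\omega(y) \geq \e\Delta$. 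Integrating, $\|\varphi_\tau(x)-\varphi_\tau(y)\|_{L_2(\mu)}^2 \geq \delta (\e\Delta)^2 = \delta \e^2 (\tau/\e)^2 = \delta \tau^2$, so $\|\varphi_\tau(x)-\varphi_\tau(y)\| \geq \sqrt{\delta}\,\tau \geq \frac{\|\varphi_\tau\|_{\Lip}}{2/\sqrt{\delta}}\,\tau$, and combining with $\|\varphi_\tau\|_{\Lip} \leq 2$ yields the claimed bound $K \leq \frac{4}{\e\sqrt{\delta}}$ once one is careful about whether $\Delta$ is $\tau/\e$ or a nearby value and tracks the factor-of-2's.

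The main obstacle I anticipate is the sign-consistency issue in defining $\varphi_\tau$ so that the map is \emph{deterministic} (the definition of threshold embedding asks for an honest Lipschitz map $\varphi_\tau : X \to H$, not a random one) while still getting the lower bound with the stated constant. The clean fix is to take $H = L_2(\Omega \times \{0,1\}^{(\text{clusters})}, \mu \otimes \text{uniform})$, i.e.\ include the random signs as extra coordinates of the target Hilbert space rather than as randomness in the map; then $\varphi_\tau$ is a fixed map and the ``probability $\geq \delta$'' padding guarantee combined with $\E_{\text{signs}}[\,\cdot\,]$ over the sign coordinates produces the $\sqrt\delta$ gap deterministically. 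A secondary, purely bookkeeping obstacle is confirming measurability and that the relevant sums/integrals converge — but the excerpt's standing assumption that $\mathcal P$ is supported on countably many countable partitions is precisely what makes this routine, so I would invoke \cite{LN06} and not belabor it.
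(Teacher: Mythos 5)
Your construction is the right gadget (distance-to-boundary times a per-cluster random sign, landing in $L_2$ of the product space), and the structural outline matches the paper's. But the scale choice $\Delta = \tau/\e$ introduces a real logical error in the threshold lower bound. You write: ``Whenever the padding event $B(x,\e\Delta) \subseteq P_\omega(x)$ occurs, $y \notin B(x,\e\Delta)$ forces $y \notin P_\omega(x)$.'' This inference is invalid: $B(x,\e\Delta) \subseteq P_\omega(x)$ says the ball is \emph{contained in} the cluster, not that the cluster \emph{equals} the ball. The cluster $P_\omega(x)$ can have diameter up to $\Delta = \tau/\e$, which for $\e < 1$ is strictly larger than $\tau$, so a point $y$ with $d(x,y) \geq \tau$ can easily lie in $P_\omega(x)$ even on the padding event. (There is also a boundary issue at $d(x,y) = \tau$ exactly, since $B$ is a closed ball, but that is minor compared to the containment error.) Because of this, you never establish that $x,y$ fall in different clusters, and the per-cluster sign trick has nothing to act on, so the claimed lower bound does not follow.

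The paper resolves this by choosing $\Delta = \tau/2$, i.e.\ \emph{strictly smaller} than the threshold $\tau$. Then the $\Delta$-boundedness of the partition by itself forces $\mathcal P(x) \neq \mathcal P(y)$ almost surely whenever $d(x,y) \geq \tau > \Delta$; the padding event is used for the complementary purpose of ensuring $d(x, X \setminus \mathcal P(x)) \geq \e\Delta$. These are two separate roles and you were trying to make padding do both. With $\Delta = \tau/2$ one gets $\|\f_\tau(x)-\f_\tau(y)\|^2 \geq \frac{\delta}{4}\e^2\Delta^2 = \frac{\delta\e^2\tau^2}{16}$ (using $\{0,1\}$-Bernoulli cluster labels so that the $\frac{1}{4}$ comes from $\sigma_{\mathcal P(x)}=1,\sigma_{\mathcal P(y)}=0$ and the map is $1$-Lipschitz), which gives the stated $K \leq \frac{4}{\e\sqrt\delta}$. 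As a secondary point, even setting aside the $\Delta$ issue, your lower-bound arithmetic silently drops the probability ($\tfrac12$ for $\pm1$ signs) that the two clusters receive opposite signs; you need to carry that factor through.
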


\begin{proof}
Fix $\tau > 0$ and put $\Delta = \tau/2$.  We may assume that $\Delta < \mathrm{diam}(X)$.
 Let $\mathcal P$ be a $(\e,\delta,\Delta)$-padded random partition
with associated probability space $(\Omega,\Sigma,\mu)$.  For each set $S$ which occurs as a member of $\bigcup_{\omega \in \Omega} P_{\omega}$,
let $\sigma_S$ be an independent Bernoulli $\{0,1\}$ random variable.  We use $(\Omega', \P)$ to denote the product
space encompassing $(\Omega,\mu)$ and $\{\sigma_S\}$.  Finally, consider the map
$\f_{\tau} : X \to L_2(\Omega',\P)$ given by
$$
\f_{\tau}(x) = \sigma_{\mathcal P(x)} \cdot d(x, X \setminus \mathcal P(x))\,.
$$
Observe that for $x,y \in X$,
we have
$$
\|\f_{\tau}(x) - \f_{\tau}(y)\|^2_{L_2(\Omega',\P)} =\E\, |\sigma_{\mathcal P(x)} \cdot d(x, X \setminus \mathcal P(x)) - \sigma_{\mathcal P(y)} \cdot d(y, X \setminus \mathcal P(y))|^2\,.
$$

First, we argue that $\f_{\tau}$ is $1$-Lipschitz.  This follows because if $\mathcal P(x)=\mathcal P(y)$, then
$$
|d(x, X \setminus \mathcal P(x)) - d(y,X \setminus \mathcal P(y))| = |d(x, X \setminus \mathcal P(x)) - d(y,X \setminus \mathcal P(x))| \leq d(x,y)\,.
$$
On the other hand, if $\mathcal P(x) \neq \mathcal P(y)$, then
$$
d(x, X \setminus \mathcal P(x)), d(y,X \setminus \mathcal P(y)) \leq d(x,y)\,.
$$

Finally, assume that $d(x,y) \geq \tau$ which implies that $d(x,y) > \Delta$.
Since $\mathcal P$ is $\Delta$-bounded, we have $\P[\mathcal P(x) \neq \mathcal P(y)]=1$.
Therefore,
\begin{align*}
\E& \left|\sigma_{\mathcal P(x)} \cdot d(x, X \setminus \mathcal P(x)) - \sigma_{\mathcal P(y)} \cdot d(y, X \setminus \mathcal P(y))\right|^2 \\ &\geq
\P\left[B(x,\e \Delta) \subseteq \mathcal P(x), \sigma_{\mathcal P(x)}=1, \sigma_{\mathcal P(y)}=0\right] \cdot \e^2 \Delta^2 \\
&\geq
\frac{\delta}{4} \e^2 \Delta^2\,.
\end{align*}
It follows that $d(x,y) \geq \tau \implies \|\f_{\tau}(x) - \f_{\tau}(y)\|_{L_2(\Omega',\P)} \geq \frac{\sqrt{\delta} \e \tau}{4}$, completing the proof.
\end{proof}

The next series of results follow from Theorem \ref{thm:partitions} combined with Theorem \ref{thm:main}
and the existence of padded random partitions.  The references for these partitions are
\cite{KPR93} for (i) and (iii), \cite{LS10} for (ii), \cite{GKL03} for (iv), and \cite{NS11} for (v).
We refer to \cite{LN05} for a treatment of (i),(iii), and (iv).

\begin{cor}
There is an absolute constant $K > 0$ such that the following results hold true.
\begin{enumerate}
\item If $(X,d)$ is a planar graph metric, then $M_2(X) \leq K$.
\item If $(X,d)$ is a metric on a graph of (orientable) genus $g > 1$, then $M_2(X) \leq K \log g$.
\item If $(X,d)$ is a metric on a graph which excludes $K_h$ as a minor, then $M_2(X) \leq K h^2$.
\item If $(X,d)$ is $\lambda$-doubling for some $\lambda \geq 1$, then $M_2(X) \leq K \log (1+\lambda)$.
\item If $(X,d)$ has finite Assouad-Nagata dimension, then $X$ has Markov type 2.
\end{enumerate}
\end{cor}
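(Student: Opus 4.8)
The plan is to read each of the five statements as an immediate combination of Theorem~\ref{thm:partitions} and Theorem~\ref{thm:main}; the only case-dependent input is the quality $(\e,\delta)$ of the padded random partitions available for the family in question.

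First I would record the one fact about the target space: a Hilbert space $H$ is $2$-uniformly smooth with $S_2(H)=O(1)$. Indeed, the parallelogram law gives $\|x+\e y\|^2+\|x-\e y\|^2 = 2+2\e^2$ for unit vectors $x,y$, hence $\rho_H(\e)\leq \sqrt{1+\e^2}-1\leq \e^2/2$ and $S_2(H)\leq 1$. Consequently, once $X$ is shown to $K'$-threshold-embed into $H$, Theorem~\ref{thm:main} applied with $p=2$ yields $M_2(X)=O(K'\,S_2(H))=O(K')$. So in each case it suffices to exhibit a $K'$-threshold-embedding of $X$ into Hilbert space with $K'$ of the claimed order; and by Theorem~\ref{thm:partitions}, this reduces further to producing, for every $\Delta>0$, an $(\e,\delta,\Delta)$-padded random partition of $X$ with $\frac{4}{\e\sqrt\delta}$ of the claimed order.

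It then remains to invoke the known partition constructions, reading off the parameters:
\begin{enumerate}
\item For planar graph metrics, the Klein--Plotkin--Rao scheme \cite{KPR93} (see also \cite{LN05}) gives, for every $\Delta>0$, an $(\e,\delta,\Delta)$-padded random partition with $\e,\delta=\Omega(1)$, so $K'=O(1)$.
\item For graphs of orientable genus $g>1$, the construction of \cite{LS10} gives such a partition for every $\Delta$ with $\delta=\Omega(1)$ and $\e=\Omega(1/\log g)$, so $K'=O(\log g)$.
\item For graphs excluding $K_h$ as a minor, \cite{KPR93} (again see \cite{LN05}) gives such a partition with $\delta=\Omega(1)$ and $\e=\Omega(1/h^2)$, so $K'=O(h^2)$.
\item For $\lambda$-doubling metrics, the construction of \cite{GKL03} (see \cite{LN05}) gives such a partition with $\delta=\Omega(1)$ and $\e=\Omega(1/\log(1+\lambda))$, so $K'=O(\log(1+\lambda))$.
\item For spaces of finite Assouad--Nagata dimension, \cite{NS11} supplies parameters $\e,\delta>0$ depending only on $\dimAN(X,d)$ such that an $(\e,\delta,\Delta)$-padded random partition exists for every $\Delta$; hence $K'=O(1)$ (with the constant depending on the dimension), and $X$ has Markov type $2$.
\end{enumerate}
Feeding the resulting bound on $K'$ into the reduction of the previous paragraph gives the stated bound on $M_2(X)$ in each item, and taking $K$ to be the maximum of the four absolute constants arising in (i)--(iv) finishes the argument.

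I do not expect a genuine obstacle here: the mathematical content is entirely contained in Theorems~\ref{thm:partitions} and~\ref{thm:main} together with the cited decomposition theorems. The two points that need care are (a) confirming that each cited construction really produces a random partition at \emph{every} bound $\Delta>0$ --- so that the full scale-indexed family of maps demanded by Theorem~\ref{thm:partitions} is available, not just one fixed scale --- and (b) extracting the correct dependence of the padding parameter $\e$ on $g$, $h$, and $\lambda$ in (ii)--(iv), while noting that $\delta$ stays bounded below by an absolute constant throughout. For (v) one should additionally verify that finiteness of the Assouad--Nagata dimension is precisely the hypothesis under which \cite{NS11} yields scale-invariant padded partitions with $\e,\delta$ bounded in terms of the dimension.
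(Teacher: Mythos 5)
Your argument is correct and follows exactly the same route as the paper: reduce via Theorem~\ref{thm:main} (with $p=2$ and $S_2(H)=O(1)$) and Theorem~\ref{thm:partitions} to the existence of scale-invariant padded random partitions, then cite \cite{KPR93} for (i) and (iii), \cite{LS10} for (ii), \cite{GKL03} for (iv), and \cite{NS11} for (v). The paper itself gives no further detail than this, so your extra bookkeeping of the $\e,\delta$ parameters in each case is a faithful expansion of the intended proof rather than a deviation from it.
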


\remove{

\section{The problem}

I want a ``tail bound'' proof that
for martingales $\{X_t\}$ whose differences satisfy $\E\,|X_{s+1}-X_s|^2 \leq 1$,
we have $\E\,|X_t-X_0|^2 \leq O(t)$.
Here is an example (but it only works using a bound on the conditional variances).

\begin{lemma}
Let $\{X_t\}$ be a real-valued martingale.
Suppose further that $\E_{s} |X_{s+1} - X_{s}|^2 \leq 1$ for all $0 \leq s \leq t-1$.  Then for some $C > 0$
and every $\lambda > 0$, the following holds:
$$
\pr(|X_t-X_0| > \lambda \sqrt{t}) \leq e^{-\sqrt{\lambda}} + C \sum_{s=0}^{t-1} \pr(|X_{s+1}-X_s| > \lambda \sqrt{t}/C)\,.
$$
\end{lemma}

I can prove this lemma using a truncation argument and an inequality of Freedman (Lemma \ref{lem:freedman} below).  Now proceed to solve
the original problem by writing
\begin{eqnarray*}
\E \, |X_t-X_0|^2 &=& t \int_0^{\infty} \lambda \cdot \pr(|X_t - X_0| > \lambda \sqrt{t}) \,d\lambda \\
&\leq &
t \int_0^{\infty} \lambda e^{-\sqrt{\lambda}} \,d\lambda + C t \sum_{s=0}^{t-1} \int_0^{\infty} \lambda \cdot \pr(|X_{s+1}-X_s| > \lambda\sqrt{t}/C)\,d\lambda \\
&\leq &
O(t) +  C t \cdot \sum_{s=0}^{t-1} \frac{C^2 \E\,|X_{s+1}-X_s|^2}{t} \\
&\leq &
O(t)\,.
\end{eqnarray*}

Of course, this needs the assumption that $\E_{s} \,|X_{s+1}-X_s|^2 \leq 1$ for Lemma 1.1 to be true.
I want to do
something similar only assuming $\E\,|X_{s+1}-X_s|^2 \leq 1$.  For instance, Lemma \ref{lem:mg} below would work
for this purpose (and it would solve the Markov type 2 problem for planar graphs)
but I don't know if it's true.

The following result is due to Freedman (1975).

\begin{lemma}\label{lem:freedman}
Let $\{X_t\}$ be a martingale with respect to the filtration $\{\mathcal F_t\}$.
Let
$$V = \sum_{s=0}^{t-1} \mathbb E[|X_{s+1}-X_s|^2 \mid \mathcal F_{s}]$$
denote the sum of the conditional variances.  For any $v,b > 0$, the following holds.

If $|X_{s+1}-X_s| \leq b$ almost surely for every $s=0,1,\ldots,t-1$, then
$$
\pr\left(\vphantom{\bigoplus}(|X_t-X_0| \geq \lambda) \wedge (V \leq v)\right) \leq 2 \exp\left(\frac{-\lambda^2}{2(v+b\lambda/3)}\right).
$$
\end{lemma}

If one could prove this lemma it would solve the problem, but I suspect it's not true.
Something more subtle is needed.

\begin{lemma}\label{lem:mg}
Let $\{X_t\}$ be a martingale with respect to the filtration $\{\mathcal F_t\}$.
Let
$$V = \sum_{s=0}^{t-1} \mathbb E[|X_{s+1}-X_s|^2 \mid \mathcal F_{s}]$$
Suppose that for every $s \geq 1$, we have $\E\left[|X_s-X_{s-1}|^2\right] \leq 1$.  Then
for some constant $C > 0$ and for every $\lambda \geq 1$, we have
$$
\pr\left[|X_t-X_0| > \lambda \sqrt{t}\right] \leq C \lambda^{-3} + C \sum_{s=0}^{t-1} \pr\left(|X_{s+1}-X_{s}| > \frac{\lambda \sqrt{t}}{C}\right)
+ C \pr\left(V \geq \frac{\lambda^2 t}{C}\right)\,.
$$
\end{lemma}

\section{Other stuff}

Here is the suggested method of proof for Markov type 2 of planar graphs (and doubling metrics, etc.)
Let $(X,d)$ be our metric space, and let $\{Z_t\}$ be our Markov chain
(I will assume, for simplicity at the moment, that the state space is really $X$).
Assume that $\E [d(Z_0,Z_1)^2]=1$.
Suppose that for every $\tau > 0$, there is a 1-Lipschitz mapping $\varphi_{\tau} : X \to \mathbb R$
such that for every $x,y \in X$,
$$
d(x,y) \geq \tau \implies |\varphi_{\tau}(x)-\varphi_{\tau}(y)| \geq \delta \tau\,,
$$

For every such $\tau > 0$, there are martingales (as in NPSS) $\{M^{\tau}_s\}$ and $\{N^{\tau}_s\}$ such that for every $1 \leq s \leq t-1$, we have
$$
\f_{\tau}(Z_{s+1}) - \f_{\tau}(Z_{s-1}) = (M^{\tau}_{s+1} - M^{\tau}_s) + (N^{\tau}_{t-s+1} - N^{\tau}_{t-s})\,.
$$
I'm going to pretend (for the sake of simplicity) that we can write
\begin{equation}\label{eq:mg}
\f_{\tau}(Z_t) - \f_{\tau}(Z_0) = (M_{t}^{\tau} - M_0^{\tau}) - (N_t^{\tau} - N_0^{\tau})\,,
\end{equation}
even though the precise relationship is slightly more complicated (there is a parity issue).

So now we are in position to write
\begin{eqnarray}
\E\left[d(Z_0,Z_t)^2\right] &=& t \int_0^{\infty} \lambda \cdot \pr\left(d(Z_0,Z_t) > \lambda \sqrt{t}\right) \,d\lambda \nonumber \\
&\leq &
t \int_0^{\infty}\lambda \cdot \pr\left(|\f_{\lambda \sqrt{t}}(Z_0)-\f_{\lambda \sqrt{t}}(Z_t)| \geq \delta \lambda \sqrt{t}\right)\,d\lambda \nonumber \\
&\leq &
t \int_0^{\infty} \lambda \cdot \pr\left(|M_0^{\lambda \sqrt{t}}-M_t^{\lambda \sqrt{t}}| \geq \delta \lambda \sqrt{t}\right)\,d\lambda\,.\label{eq:int}
\end{eqnarray}
There should be a matching $N_t$ term as well (see \eqref{eq:mg}), but I have omitted it.

Let's put $M_t=M_t^{\lambda \sqrt{t}}$ for a moment and $\f = \f_{\lambda \sqrt{t}}$ for the moment.  We know that
for every $1 \leq s \leq t$, we have $$\E |M_s-M_{s-1}|^2 \leq \E |\f(Z_1)-\f(Z_0)|^2 \leq \E[d(Z_0,Z_1)^2] = 1\,.$$
(The first inequality follows from properties of the forward/backward martingale decomposition.)  If we had the stronger
property that $\E[|M_s-M_{s-1}|^2 \mid \mathcal F_{s-1}] \leq 1$ (or if the lemma holds with the weaker property), then we could apply Lemma \ref{lem:mg}
and conclude that for some constant $C > 0$,
\begin{equation}\label{eq:step1}
\pr\left(|M_0-M_t| \geq \delta \lambda \sqrt{t}\right) \leq e^{- \sqrt{\delta \lambda}} +
C \sum_{s=0}^{t-1} \pr\left(|M_{s+1}-M_{s}| > \frac{\lambda \sqrt{t}}{C}\right)\,.
\end{equation}
Now I want to use the fact that $M_{s+1}-M_s = \f(Z_{s+1}) - \f(Z_s) - L \f(Z_s)$ where $L$ is the Laplacian of the underlying chain.
We can thus write
\begin{eqnarray}
|M_{s+1}-M_s| &\leq& |\f(Z_{s+1})- \f(Z_s)| + |L \f(Z_s)| \nonumber \\
&\leq&
 |\f(Z_{s+1})- \f(Z_s)| + \E [|\f(Z_s)-\f(Z_{s+1})| \mid Z_s] \nonumber \\
 &\leq &
 d(Z_{s+1},Z_s) +\E [d(Z_s, Z_{s+1}) \mid Z_s]\,. \label{eq:step2}
\end{eqnarray}

If we now use \eqref{eq:step1} in \eqref{eq:int}, we get
\begin{eqnarray*}
\E\left[d(Z_0,Z_t)^2\right] \leq
t \int_0^{\infty} e^{-\sqrt{\delta \lambda}}\,d\lambda + Ct \sum_{s=0}^{t-1} \int_0^{\infty} \lambda \cdot \pr\left(|M_{s+1}^{\lambda \sqrt{t}} - M_s^{\lambda \sqrt{t}}| > \frac{\lambda \sqrt{t}}{C}\right)d\lambda
\end{eqnarray*}
The first term is $O(t)$, so we concentrate on the second term using \eqref{eq:step2}:
\begin{eqnarray*}
t \sum_{s=0}^{t-1} \int_0^{\infty} \lambda \cdot \pr\left(|M_{s+1}^{\lambda \sqrt{t}} - M_s^{\lambda \sqrt{t}}| > \frac{\lambda \sqrt{t}}{C}\right)d\lambda
&\leq &
C\sum_{s=0}^{t-1} \int_0^{\infty} \lambda \cdot \pr\left(d(Z_{s+1},Z_s) > \lambda \right)d\lambda \\
&& + \,\,\,\,\,
C\sum_{s=0}^{t-1} \int_0^{\infty} \lambda \cdot \pr\left(\E [d(Z_s, Z_{s+1}) \mid Z_s] > \lambda\right)d\lambda \\
&= &
C \sum_{s=0}^{t-1} \E[d(Z_{s+1},Z_s)^2]
+ C \sum_{s=0}^{t-1} \E\left[ \left(\E[d(Z_s, Z_{s+1}) \mid Z_s]\right)^2\right] \\
&\leq &
O(t)\,,
\end{eqnarray*}
where in the last line we have used $\left(\E[X]\right)^2 \leq \E[X^2]$.

\section{Pairings}

Let $0 < a_1 < a_2 < \cdots < a_k$ be a sequence of positive numbers,
and suppose $p_1, p_2, \ldots, p_k \in [0,1]$.
A {\em pairing} is a measure $\mu$ on pairs of indices $(i,j) \in [k] \times [k]$
which satisfies the property:  For every $i \in [k]$,
\begin{equation}\label{eq:constraint}
p_i = \sum_{j=1}^k \mu(i,j) \frac{a_j}{a_i+a_j}\,.
\end{equation}

We will use $B_t$ to denote a Brownian motion with $B_0=0$ and $B^*_t = \max \{ |B_s| : 0 \leq s \leq t \}$.
Let $\tau(\alpha,\beta) = \inf \{ t : B_t \notin (-\alpha, \beta) \}$.  We will write
$$
G_{\mu}(\lambda) = \sum_{i \leq j} \mu(i,j) \pr[|B_{\tau(a_i,a_j)}^*| \geq \lambda]\,.
$$

\begin{theorem}\label{thm:main}
For any such $\{a_i\}$ and $\{p_i\}$ there exists a unique optimal pairing $\mu^*$
such that for any other pairing $\mu$ and $\lambda > 0$,
$$
G_{\mu}(\lambda) \leq G_{\mu^*}(\lambda)\,.
$$
\end{theorem}

This optimal pairing is the {\em lopsided pairing} which chooses $\mu(1,k)$ as large as possible
subject to \eqref{eq:constraint}.  One then reduces $p_1$ and $p_k$ accordingly (one of them becomes 0), and
construction is continued on the reduced system (with one of $a_1$ or $a_k$ removed).

\begin{figure}
\begin{center}
\includegraphics[width=3.8in]{pairing.png}
\caption{Two pairings on four points.\label{fig:pairing}}
\end{center}
\end{figure}

We will prove the theorem by an iterative argument, fixing the pairing one arc at a time.  It is not
difficult to reduce to a situation as in Figure \ref{fig:pairing}.
The second pairing is non-lopsided with $a < c < d < b$.  The goal is to show that by reducing the weights
on $ad$ and $cb$ and increasing the weights on $ab$ and $cd$, the corresponding function
$G_{\mu}(\lambda)$ increases everywhere.  We must, however, change the weights
so that \eqref{eq:constraint} remains satisfied.

To this end, consider the numbers
\begin{eqnarray*}
\partial_{ab} &=& 1 \\
\partial_{cd} &=& \frac{ab(c+d)}{cd(a+b)} \\
\partial_{ad} &=& \frac{b(a+d)}{d(a+b)} \\
\partial_{cb} &=& \frac{a(b+c)}{c(a+b)}\,.
\end{eqnarray*}
One can easily verify that changing the pairing probabilities by $\partial_{ab}, \partial_{cd}$ and $-\partial_{ad}, -\partial_{cb}$ simultaneously
maintains \eqref{eq:constraint}.  For instance,
$$
\Delta p_a = \partial_{ab} \frac{b}{a+b} - \partial_{ad} \frac{d}{a+d} = \frac{b}{a+b} - \frac{b}{a+b} = 0\,.
$$

We also write, for $\alpha \leq \beta$,
$$
F_{\alpha,\beta}(\lambda) = \pr[|B_{\tau(\alpha,\beta)}^*| \geq \lambda] =
\begin{cases}
1 & \lambda \leq \alpha \\
\frac{\alpha}{\alpha+\lambda} & \alpha < \lambda \leq \beta \\
0 & \lambda > \beta\,.
\end{cases}
$$

Thus to prove Theorem \ref{thm:main}, we need to show that for every $\lambda > 0$, we have
$$
\partial_{ab} F_{a,b}(\lambda) + \partial_{cd} F_{c,d}(\lambda) \geq
\partial_{ad} F_{a,d}(\lambda) + \partial_{cb} F_{c,b}(\lambda)\,.
$$
Rewrite this as
\begin{equation}\label{eq:check}
F_{a,b}(\lambda) + \frac{ab(c+d)}{cd(a+b)} F_{c,d}(\lambda) \geq
\frac{b(a+d)}{d(a+b)} F_{a,d}(\lambda) + \frac{a(b+c)}{c(a+b)} F_{c,b}(\lambda)\,.
\end{equation}
As a sanity check, consider this for $\lambda=a$, in which case it becomes
$$
1 + \frac{ab(c+d)}{cd(a+b)}\geq
\frac{b(a+d)}{d(a+b)} + \frac{a(b+c)}{c(a+b)}\,,
$$
and both sides are equal.

On the interval $(a,c]$, \eqref{eq:check} reads
$$
\frac{a}{a+\lambda} + \frac{ab(c+d)}{cd(a+b)} \geq
\frac{b(a+d)}{d(a+b)} \frac{a}{a+\lambda} + \frac{a(b+c)}{c(a+b)} \,.
$$
Since $\frac{b(a+d)}{d(a+b)} \geq 1$, by monotonicity, we need only check this at $\lambda=a$, yielding
$$
\frac12 + \frac{ab(c+d)}{cd(a+b)} \geq
\frac{b(a+d)}{2 d(a+b)} + \frac{a(b+c)}{c(a+b)} \,,
$$
and multiplying through by $2cd(a+b)$ yields
$$
cd(a+b) + 2 ab(c+d) \geq bc(a+d) + 2ad(b+c)\,,
$$
which reduces to $b \geq d$.

On the interval $(c,d]$, \eqref{eq:check} reads
$$
\frac{a}{a+\lambda} + \frac{ab(c+d)}{d(a+b)(c+\lambda)} \geq
\frac{b(a+d)}{d(a+b)} \frac{a}{a+\lambda} + \frac{a(b+c)}{(a+b)(c+\lambda)}\,.
$$
First we multiply through yielding
\begin{equation}\label{eq:tover}
d(a+b)(c+\lambda) + b(c+d)(a+\lambda) \geq
(c+\lambda) b(a+d)  + d (b+c)(a+\lambda)\,.
\end{equation}
Now differentiating both sides with respect to $\lambda$ gives derivatives which compare like
$$
d(a+b) + b(c+d) \geq b(a+d) + d(b+c)\,,
$$
(verify using $a \leq c$),
hence we need only verify \eqref{eq:tover} for $\lambda=c$ which then reads
$$
2cd(a+b) + b(c+d)(a+c) \geq
2c b(a+d)  + d (b+c)(a+c)\,,
$$
which can again be verified using $a \leq c$.

Finally, we are left to verify the interval $\lambda \in (d,b]$, where \eqref{eq:check} becomes
$$
\frac{a}{a+\lambda}  \geq
\frac{a(b+c)}{c(a+b)} \frac{c}{c+\lambda}\,.
$$
Multiplying through:
$$
(a+b)(c+\lambda) \geq (a+\lambda)(b+c)\,,
$$
which follows from $\lambda \leq b$.  This finishes the verification of \eqref{eq:check} and hence the proof of Theorem \ref{thm:main}.

\begin{remark}
To prove Theorem \ref{thm:main} fully, one starts with an arbitrary pairing $\mu$.  Now we go along the construction sequence
for $\mu^*$ until there is a difference with $\mu$.  We may assume that $\mu$ is the minimal counterexample to the theorem,
in the sense that it agrees with $\mu^*$ for as long as possible among all counterexamples.
The first disagreement is on the pair $(a,b)$.  $\mu$ might put some weight on $(a,b)$ but not as much as $\mu^*$.

 Thus in $\mu$, $a$ and $b$ are matched elsewhere, i.e. to $c$ and $d$.
Now we transform $\mu$ into another counterexample which agrees more with $\mu^*$, which contradicts
minimality of $\mu$.
\end{remark}}

\section{Dimension reduction for martingales in smooth Banach spaces}
\label{sec:unismooth}

The following dimension reduction lemma is a special case
of the continuous-time version proved in \cite{KS91}.
For an exposition of the discrete case,
see \cite[Prop. 5.8.3]{KW92}.

\begin{lemma}\label{lem:KS}
Let $\{M_t\}$ be an $L_2$-valued martingale.
Then there exists an $\mathbb R^2$-valued martingale $\{N_t\}$ such that
for any time $t \geq 0$,
$\|N_t\|_2= \|M_t\|_2$ and $\|N_{t+1}-N_t\|_2 = \|M_{t+1}-M_t\|_2$.
\end{lemma}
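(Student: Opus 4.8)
The plan is to reduce the discrete-time statement to a two-dimensional process by tracking only the two quantities that actually matter: the current norm $\|M_t\|_2$ and the size of the increment. Write $D_t = M_t - M_{t-1}$ for the martingale difference, so that $M_t = M_0 + \sum_{s=1}^t D_s$. The idea is to build $\{N_t\}$ in $\mathbb{R}^2$ step by step: having defined $N_{t-1}$ with $\|N_{t-1}\|_2 = \|M_{t-1}\|_2$, I want to choose $N_t = N_{t-1} + E_t$ where $E_t$ is an $\mathbb{R}^2$-valued $\mathcal{F}_t$-measurable increment satisfying (i) $\E[E_t \mid \mathcal{F}_{t-1}] = 0$, (ii) $\|E_t\|_2 = \|D_t\|_2$, and (iii) $\|N_{t-1} + E_t\|_2 = \|M_{t-1} + D_t\|_2 = \|M_t\|_2$. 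Condition (iii) is the crucial coupling constraint, since it forces the norm of the new planar vector to agree with the norm of the genuine martingale, and it is consistent with (i) and (ii) precisely because in $L_2$ one has the parallelogram identity $\|M_{t-1}+D_t\|_2^2 = \|M_{t-1}\|_2^2 + 2\langle M_{t-1}, D_t\rangle + \|D_t\|_2^2$, and $\E[\langle M_{t-1}, D_t\rangle \mid \mathcal{F}_{t-1}] = 0$.

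The key step is the following geometric observation, carried out pointwise in $\omega$ conditioned on $\mathcal{F}_{t-1}$. Fix the vector $N_{t-1} \in \mathbb{R}^2$ of known length $r = \|M_{t-1}\|_2$, and fix the conditional law of the pair $(\|D_t\|_2, \langle M_{t-1}/\|M_{t-1}\|_2, D_t\rangle)$ — call the second coordinate $c_t$, the component of the increment along the current position. One knows $\E[c_t \mid \mathcal{F}_{t-1}] = 0$ and $\|D_t\|_2^2 \ge c_t^2$. I want to realize an $\mathbb{R}^2$ increment $E_t$ whose component along the ray of $N_{t-1}$ equals $c_t$, whose full length equals $\|D_t\|_2$ (so the orthogonal component has length $\sqrt{\|D_t\|_2^2 - c_t^2}$), and whose orthogonal component has a conditionally symmetric sign — an independent $\pm 1$ chosen after enlarging the probability space. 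Then (i) holds because the radial part has conditional mean $\E[c_t]=0$ and the orthogonal part has conditional mean zero by the symmetric sign; (ii) holds by construction; and (iii) holds because $\|N_{t-1}+E_t\|_2^2 = r^2 + 2rc_t/\!\ldots$ — more precisely $\|N_{t-1}+E_t\|_2^2 = \|N_{t-1}\|_2^2 + 2\langle N_{t-1}, E_t\rangle + \|E_t\|_2^2 = r^2 + 2 r c_t + \|D_t\|_2^2 = \|M_{t-1}+D_t\|_2^2$, matching the $L_2$ computation above. Iterating this construction over $t$ and augmenting the filtration with the auxiliary signs gives the martingale $\{N_t\}$ with the two desired identities.

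I expect the main obstacle to be purely a matter of careful bookkeeping rather than a deep difficulty: one must verify measurability of the increments $E_t$ with respect to the enlarged filtration (the radial component $c_t$ is $\mathcal{F}_t$-measurable, the orthogonal sign is a fresh independent variable, so $N_t$ is measurable in the enlarged filtration and $\{N_t\}$ remains a martingale there), and one must handle the degenerate cases where $N_{t-1} = 0$ (then any direction works for the radial axis — pick a fixed one) and where $\|D_t\|_2 = |c_t|$ (then the orthogonal component vanishes and no sign is needed). Since the lemma is explicitly stated as a special case of the continuous-time result of Kallenberg--Sztencel \cite{KS91}, with an exposition of the discrete case in \cite[Prop.~5.8.3]{KW92}, it would also be legitimate to simply invoke those references; but the self-contained inductive argument above is short enough to include, and it is exactly this planar reduction that Lemma~\ref{cor-integrate-tail} and the proof of Theorem~\ref{thm:main} rely on when $\mathcal{Z}$ is a Hilbert space.
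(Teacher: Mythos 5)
Your proof is correct. The paper itself gives no proof of this lemma; it cites Kallenberg--Sztencel \cite{KS91} for the continuous-time version and Kwapie\'n--Woyczy\'nski \cite[Prop.\ 5.8.3]{KW92} for the discrete case, so there is nothing in the paper to compare against, but your planar reduction---carry over the radial component $c_t = \langle M_{t-1}/\|M_{t-1}\|_2, D_t\rangle$ (with $|c_t|\le\|D_t\|_2$ by Cauchy--Schwarz and $\E[c_t\mid\mathcal F_{t-1}]=0$), and pair the orthogonal component of length $\sqrt{\|D_t\|_2^2-c_t^2}$ with a fresh independent Rademacher sign---is precisely the standard discrete-time argument in that reference. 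Two small points that you already flag but are worth stating cleanly: the resulting $\{N_t\}$ is a martingale only for the enlarged filtration $\mathcal G_t = \sigma(\mathcal F_t, \varepsilon_1,\dots,\varepsilon_t)$, which is harmless since the lemma makes no claim about the filtration (the identity $\E[c_t\mid\mathcal G_{t-1}]=0$ follows from $\E[c_t\mid\mathcal F_{t-1}]=0$ and the independence of the signs from $\mathcal F_\infty$); and the base case $N_0 = (\|M_0\|_2,0)$ is $\mathcal F_0$-measurable, which handles a possibly random initial value.
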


We now present a somewhat similar form of martingale dimension reduction for $p$-uniformly smooth Banach spaces.
Consider $p \in (1,2]$ and let $\mathcal Z$ be a $p$-uniformly smooth Banach space with smoothness constant $S_p(\mathcal Z)$.
It is known that for every $z \in \mathcal Z$, there exists a unique functional $J_z \in \mathcal Z^*$
such that the following conditions hold.
\begin{enumerate}
\item $\|J_z\| = \|z\|^{p-1}$
\item $\langle J_z, z\rangle = \|z\|^p$.
\item There is a constant $C = O(S_p(\mathcal Z)^p)$ such that for all $x,y \in \mathcal Z$,
\begin{equation}\label{eq:smooth}
\|x+y\|^p \leq \|x\|^p + p \langle J_x,y\rangle + C \|y\|^p\,.
\end{equation}
\end{enumerate}
See, for instance, \cite[Eq. (3.8)]{XR91} and \cite[Cor 4.17]{Chidume09}.

\begin{lemma}\label{lem:uniformreduce}
For $p \in (1,2]$, the following holds.
Let $\mathcal Z$ be a $p$-uniformly smooth Banach space and let $\{M_t\}$ be a $\mathcal Z$-valued martingale with respect
to the filtration $\{\mathcal F_t\}$.
Then there exists an $\mathbb R^2$-valued martingale $\{N_t\}$ and a constant $K > 0$ such that for any time $t \geq 0$, the following holds.
\begin{enumerate}
\item $\|M_t-M_0\|^p \preceq \|N_t-N_0\|_2^2$, and
\item $\|N_{t+1}-N_{t}\|_2^2 \preceq K \left(\vphantom{\bigoplus}\|M_{t+1}-M_t\|^p + \E\left[\|M_{t+1}-M_t\|^p\mid \mathcal F_{t-1}\right]\right),$
\end{enumerate}
where $K = O(S_p(\mathcal Z)^p)$.
\end{lemma}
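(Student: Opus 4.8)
The plan is to imitate the Kallenberg--Sztencel construction (Lemma~\ref{lem:KS}), replacing the Pythagorean identity available in Hilbert space by the one-sided inequality \eqref{eq:smooth}. After recentering we may assume $M_0 = 0$, so the target is an $\mathbb{R}^2$-valued martingale $\{N_t\}$ with $N_0 = 0$, $\|N_t\|_2^2 \succeq \|M_t\|^p$, and $\|N_{t+1}-N_t\|_2^2 \preceq K\bigl(\|M_{t+1}-M_t\|^p + \E[\|M_{t+1}-M_t\|^p \mid \mathcal{F}_{t-1}]\bigr)$ for $K = O(S_p(\mathcal{Z})^p)$. Applying \eqref{eq:smooth} with $x = M_t$, $y = M_{t+1}-M_t$ and setting $W_{t+1} = p\langle J_{M_t}, M_{t+1}-M_t\rangle$ gives the pointwise bound
$$\|M_{t+1}\|^p \;\le\; \|M_t\|^p + W_{t+1} + C\,\|M_{t+1}-M_t\|^p,$$
where $C = O(S_p(\mathcal{Z})^p)$, and where the defining properties of $J$ give both $\E[W_{t+1}\mid\mathcal{F}_t]=0$ and $|W_{t+1}| \le p\,\|M_t\|^{p-1}\,\|M_{t+1}-M_t\|$. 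Thus $\{W_t\}$ is a martingale difference sequence whose magnitude is dominated by the jumps of $\{M_t\}$, and the idea is that $N$ should absorb the drift term $W_{t+1}$ radially and the term $C\|M_{t+1}-M_t\|^p$ transversally.

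I would then build $\{N_t\}$ by induction on $t$, maintaining along a coupling (allowed to use fresh independent randomness at each step) the invariant $\|N_t\|_2^2 \ge \|M_t\|^p$. The core is the generic step: given $N_t \neq 0$, write $N_{t+1}-N_t = x_{t+1}\,\widehat N_t + y_{t+1}\,\widehat N_t^{\perp}$ with $\widehat N_t = N_t/\|N_t\|_2$ and $\widehat N_t^{\perp}$ a unit vector orthogonal to it (both $\mathcal{F}_t$-measurable), take $x_{t+1} = W_{t+1}/(2\|N_t\|_2)$ (which has conditional mean $0$ given $\mathcal{F}_t$), and $y_{t+1} = \varepsilon_{t+1}\sqrt{C}\,\|M_{t+1}-M_t\|^{p/2}$ for an independent fair $\pm 1$ sign $\varepsilon_{t+1}$. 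Expanding the square, $\|N_{t+1}\|_2^2 \ge \|N_t\|_2^2 + W_{t+1} + C\|M_{t+1}-M_t\|^p \ge \|M_t\|^p + W_{t+1} + C\|M_{t+1}-M_t\|^p \ge \|M_{t+1}\|^p$, so the invariant propagates, while $\|N_{t+1}-N_t\|_2^2 = x_{t+1}^2 + C\|M_{t+1}-M_t\|^p$; using $\|N_t\|_2^2 \ge \|M_t\|^p$ and the bound on $|W_{t+1}|$ gives $x_{t+1}^2 \le \tfrac{p^2}{4}\,\|M_t\|^{p-2}\,\|M_{t+1}-M_t\|^2 \le \tfrac{p^2}{4}\,\|M_{t+1}-M_t\|^p$, the last inequality valid precisely when $\|M_{t+1}-M_t\| \le \|M_t\|$.

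The hard part --- and the reason the one-step bound contains the backward-conditioned term $\E[\,\cdot \mid \mathcal{F}_{t-1}]$ --- is the degenerate regime where the realized jump $\|M_{t+1}-M_t\|$ is large compared with $\|M_t\|$ (equivalently $\|N_t\|_2$ is small, including the case $M_t = 0$): then the radial correction $x_{t+1}$ can exceed the budget $O(\|M_{t+1}-M_t\|^{p/2})$, and, worse, choosing between the generic step and a fallback according to a function of the increment would bias $\E[W_{t+1}\mid\mathcal{F}_t]$ away from $0$. I plan to handle this in two coupled ways. First, whenever $\E[\|M_{t+1}-M_t\|^p\mid\mathcal{F}_{t-1}]$ is not small relative to $\|M_t\|^p$, I append at \emph{step} $t$ an extra transverse component of squared length of order $\E[\|M_{t+1}-M_t\|^p\mid\mathcal{F}_{t-1}]$ (this quantity is $\mathcal{F}_{t-1}$-measurable, which is exactly why it enters the increment bound), keeping $\|N_t\|_2$ comparable to $\|M_t\|^{p/2} + \sqrt{\E[\|M_{t+1}-M_t\|^p\mid\mathcal{F}_{t-1}]}$. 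Second, in the event that the realized jump is nonetheless large compared with $\|M_t\|$, I discard the radial tracking for that step and let $N_{t+1}-N_t$ be a fresh isotropic vector of squared length a fixed multiple of $\|M_{t+1}-M_t\|^p$; then $\|N_{t+1}\|_2^2 \ge \|N_{t+1}-N_t\|_2^2 \ge \bigl(\|M_t\| + \|M_{t+1}-M_t\|\bigr)^p \ge \|M_{t+1}\|^p$ on its own, so the invariant survives, and since the direction is drawn from fresh randomness independent of $\mathcal{F}_{t+1}$ the martingale property is intact. To avoid the conditional-expectation bias of a data-dependent choice, I would route the selection between the generic and fallback branches through an auxiliary independent uniform variable, calibrated so that $\E[N_{t+1}-N_t \mid \mathcal{F}_t]=0$ holds exactly. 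Tracking constants through \eqref{eq:smooth} yields $K = O(S_p(\mathcal{Z})^p)$, and the required stochastic domination is immediate from the pointwise invariant under the coupling.

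I expect the genuine difficulty to be making the degenerate-regime bookkeeping fully rigorous while keeping $N$ a true martingale: combining the pre-inflation step, the fresh-isotropic fallback, and the randomized branch selection without destroying either the invariant $\|N_t\|_2^2 \ge \|M_t\|^p$ or the adaptedness and mean-zero properties. An alternative route would be to first pass to a Hilbert-valued proxy for $\{M_t\}$ and apply Lemma~\ref{lem:KS} verbatim, but because \eqref{eq:smooth} is only one-sided this does not seem available without essentially the construction above.
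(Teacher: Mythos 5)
Your generic step (recenter so $M_0=0$, decompose $N_{t+1}-N_t$ into a radial piece driven by $\tfrac{p}{2}\langle J_{M_t},M_{t+1}-M_t\rangle/\|N_t\|_2$ and a sign-randomized transverse piece of squared length $\asymp C\|M_{t+1}-M_t\|^p$, propagate $\|N_t\|_2^2\ge\|M_t\|^p$) is exactly the skeleton of the paper's construction, and your computation showing that the radial contribution to $\|N_{t+1}-N_t\|_2^2$ is within budget precisely when $\|M_{t+1}-M_t\|\le\|M_t\|$ is correct. You have also correctly identified the degenerate regime as the only obstacle. But the two mechanisms you propose for that regime each have a gap.

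First, the \emph{pre-inflation} step has a timing mismatch. You want to attach to $N_t-N_{t-1}$ a transverse component of squared length $\asymp\E[\|M_{t+1}-M_t\|^p\mid\mathcal F_{t-1}]$, but the budget for $\|N_t-N_{t-1}\|_2^2$ in the lemma is $K\bigl(\|M_t-M_{t-1}\|^p+\E[\|M_t-M_{t-1}\|^p\mid\cdot\,]\bigr)$, which involves the \emph{current} jump, not the next one. In general $\E[\|M_{t+1}-M_t\|^p\mid\mathcal F_{t-1}]$ is not dominated by that quantity, so the pre-inflated increment overshoots. Second, the \emph{fresh isotropic fallback} does not preserve the pointwise invariant: from $N_{t+1}=N_t+(N_{t+1}-N_t)$ one only gets $\|N_{t+1}\|_2\ge\bigl|\,\|N_t\|_2-\|N_{t+1}-N_t\|_2\,\bigr|$, so a fresh direction can nearly cancel $N_t$ and the claimed $\|N_{t+1}\|_2^2\ge\|N_{t+1}-N_t\|_2^2$ is simply false. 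Finally, the ``randomized branch selection calibrated so that $\E[N_{t+1}-N_t\mid\mathcal F_t]=0$'' is left as a black box; that is precisely where the conditional-mean bias lives, and you acknowledge you do not see how to close it.

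The paper's resolution is cleaner and deterministic, and it is the idea you are missing. Instead of branching, keep the radial term but truncate it by the indicator $\1_{A_{t-1}}$ of the \emph{good} event $A_{t-1}=\{\|M_t-M_{t-1}\|^p\le\|N_{t-1}\|_2^2\}$, and restore the martingale property by subtracting the $\mathcal F_{t-1}$-measurable recentering constant $\delta_{t-1}=\E[\langle J_{M_{t-1}},M_t-M_{t-1}\rangle\1_{A_{t-1}}\mid\mathcal F_{t-1}]$. The two key estimates are: (a) on $A_{t-1}^c$ one has $|\langle J_{M_{t-1}},M_t-M_{t-1}\rangle|\le\|M_{t-1}\|^{p-1}\|M_t-M_{t-1}\|\le\|M_t-M_{t-1}\|^p$, which controls the discrepancy between the truncated and untruncated radial terms by the allowed jump term; and (b) since $\E[\langle J_{M_{t-1}},M_t-M_{t-1}\rangle\mid\mathcal F_{t-1}]=0$, one gets $|\delta_{t-1}|\le\E[\|M_t-M_{t-1}\|^p\mid\mathcal F_{t-1}]$, which is exactly the conditional-expectation term permitted in the increment bound --- this is why that term appears in the statement at all. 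One then enlarges the transverse piece to $\e_t N_{t-1}^\perp\bigl(\sqrt{C+p}\,\|M_t-M_{t-1}\|^{p/2}+\sqrt{p}\,(\E[\|M_t-M_{t-1}\|^p\mid\mathcal F_{t-1}])^{1/2}\bigr)$ so that expanding $\|N_t\|_2^2$ absorbs both the $-p\delta_{t-1}$ loss and the $A_{t-1}^c$ correction, and the induction closes pointwise. With this substitution in place of your two fallback devices, your argument becomes the paper's proof.
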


\begin{proof}
Suppose $\{M_t\}$ is a martingale with respect to the filtration $\{\mathcal F_t\}$.
We may assume that $\|M_0\|=0$.
Let $\{\e_t\}$ be an independent,
i.i.d. sequence of random signs.  For $z \in \mathbb R^2$, let $z^{\perp} \in \mathbb R^2$ denote a unit vector perpendicular
to $z$.  We define an $\mathbb R^2$-valued process $\{N_t\}$ with $N_0 = (0,0)$.
Let $C$ be the constant from \eqref{eq:smooth}.
For $t \geq 1$,
we put
\begin{align*}
N_t = N_{t-1} & \left(1 + \frac{p}{2} \frac{\langle J_{M_{t-1}}, M_t - M_{t-1} \rangle \1_{A_{t-1}}-\delta_{t-1}}{\|N_{t-1}\|_2^2}\right)
\\
& + \e_t N_{t-1}^{\perp} \left(\sqrt{C+p} \|M_t-M_{t-1}\|^{p/2} + \sqrt{p} \left(\E\left[\|M_t-M_{t-1}\|^p \mid \mathcal F_{t-1}\right]\right)^{1/2}\right)\,.
\end{align*}
where $A_{t-1}$ is the event $\{\|M_t-M_{t-1}\|^p \leq \|N_{t-1}\|_2^2\}$ and $\delta_{t-1} = \E\left[\langle J_{M_{t-1}}, M_t-M_{t-1}\rangle \1_{A_{t-1}}\mid \mathcal F_{t-1}\right]$.  From the definition of $\delta_{t-1}$ and the presence of $\e_t$, it is clear that $\{N_t\}$ is a martingale.

We will prove claim (i) by induction.  It holds trivially for $t=0$.  Now assume that $\|M_{t-1}\|^p \leq \|N_{t-1}\|_2^2$ for some $t > 1$.
Using the definition of $A_{t-1}$,  property (i) of the $J_z$ functional, and our inductive assumption, we have
\begin{eqnarray}
|\langle J_{M_{t-1}}, M_t - M_{t-1}\rangle| \1_{A_{t-1}^c} &\leq & \|M_{t-1}\|^{p-1} \cdot \|M_t - M_{t-1}\| \1_{A_{t-1}^c} \nonumber \\
&\leq & \|N_{t-1}\|^{2(p-1)/p} \cdot \|M_t-M_{t-1}\| \1_{A_{t-1}^c} \nonumber \\
&\leq & \|M_t - M_{t-1}\|^p\,.\label{eq:Jbad}
\end{eqnarray}

Since the martingale property implies $\E[\langle J_{M_{t-1}}, M_t-M_{t-1} \mid \mathcal F_{t-1}\rangle] = 0$, we have
\begin{equation}\label{eq:delta}
|\delta_{t-1}| = \left|\E\left[\langle J_{M_{t-1}}, M_t - M_{t-1}\rangle \1_{A_{t-1}^c} \mid \mathcal F_{t-1}\right]\right| \leq \E\left[ \|M_t - M_{t-1}\|^p \mid \mathcal F_{t-1} \right]\,.
\end{equation}
Now we apply property (iii) of the $J_z$ functional to obtain
\begin{eqnarray}
\label{eq:Jz}
\|M_t\|^p &\leq& \|M_{t-1}\|^p + p \langle J_{M_{t-1}}, M_t - M_{t-1} \rangle + C \|M_t - M_{t-1}\|^p\,.
\end{eqnarray}
On the other hand, using the definition of $N_t$, we have
\begin{align*}
\|N_t\|_2^2 &\geq \|N_{t-1}\|_2^2 + p \left(\langle J_{M_{t-1}}, M_t - M_{t-1} \rangle \1_{A_{t-1}} - \delta_{t-1}\right) \\
&\ \ \ \ \ \ \ \ \ \ \ \ + (C+p) \|M_t - M_{t-1}\|^p + p\, \E\left[\|M_t-M_{t-1}\|^p \mid \mathcal F_{t-1}\right] \\
&\overset{\eqref{eq:delta}}{\geq}
\|N_{t-1}\|_2^2 + p \langle J_{M_{t-1}}, M_t - M_{t-1} \rangle \1_{A_{t-1}} + (C+p)\|M_t-M_{t-1}\|^p \\
&\overset{\eqref{eq:Jbad}}{\geq}
\|N_{t-1}\|_2^2 + p \langle J_{M_{t-1}}, M_t - M_{t-1} \rangle + C \|M_t-M_{t-1}\|^p \\
&\geq
\|M_{t-1}\|^p + p \langle J_{M_{t-1}}, M_t - M_{t-1} \rangle + C \|M_t-M_{t-1}\|^p\,,
\end{align*}
where in the final line we have used the inductive hypothesis.  Combined with \eqref{eq:Jz}, this yields $\|M_t\|^p \leq \|N_t\|_2^2$, verifying claim (i).

We proceed to verify claim (ii).
From the definition of $A_{t-1}$ and the inductive hypothesis, we have
\begin{eqnarray}
|\delta_{t-1}| &=& \left|\E\left[\langle J_{M_{t-1}}, M_t - M_{t-1}\rangle \1_{A_{t-1}} \mid \mathcal F_{t-1}\right]\right| \nonumber \\
&\leq &
\E\left[\|M_{t-1}\|^{p-1} \|M_t - M_{t-1}\| \1_{A_{t-1}} \mid \mathcal F_{t-1}\right]\nonumber\\
&\leq& \|N_{t-1}\|_2^2\,. \label{eq:delta2}
\end{eqnarray}
Additionally, using property (i) of the $J_z$ functional, the inductive hypothesis, and the definition of $A_{t-1}$, we have
\begin{eqnarray*}
\frac{\langle J_{M_{t-1}}, M_t-M_{t-1}\rangle^2 \1_{A_{k-1}}}{\|N_{t-1}\|_2^2} &\leq& \frac{\|M_{t-1}\|^{2(p-1)} \|M_t- M_{t-1}\|^2 \1_{A_{t-1}}}{\|N_{t-1}\|_2^2} \\
&\leq & \frac{\|M_t-M_{t-1}\|^2 \1_{A_{t-1}}}{\|N_{t-1}\|_2^{4(2-p)/p}} \\
&\leq & \|M_t-M_{t-1}\|^p\,.
\end{eqnarray*}
Finally, using \eqref{eq:delta} and \eqref{eq:delta2} yields
$$
\frac{|\delta_{t-1}|^2}{\|N_t\|_2^2} \leq \E\left[ \|M_t - M_{t-1}\|^p \mid \mathcal F_{t-1} \right]\,.
$$
Combining the preceding two inequalities with the definition of $N_t$, we arrive at
$$
\|N_t-N_{t-1}\|_2^2 \leq O(C) \left(\|M_t-M_{t-1}\|^p + \E\left[ \|M_t - M_{t-1}\|^p \mid \mathcal F_{t-1} \right]\right)\,,
$$
completing the verification of claim (ii).
\end{proof}

\section{Non-linear analogs of Kwapien's theorem}
\label{sec:kwapien}

First, we recall a few definitions.
A {\em uniform embedding} between metric spaces is an invertible mapping
such that both it and its inverse are uniformly continuous.
A mapping $f :X \to L_2$ is a {\em coarse embedding} if there are non-decreasing maps
$\alpha,\beta : [0,\infty) \to [0,\infty)$ such that $\beta(t) \to \infty$ and
for all $x,y \in X$,
$$
\beta(d(x,y)) \leq \|f(x)-f(y)\|_2 \leq \alpha(d(x,y))\,.
$$
We begin by proving Theorem \ref{thm:equiv} which we restate
for convenience.

\begin{theorem}\label{thm:iso}
A Banach space $Z$ threshold-embeds into Hilbert space if and only if $Z$ is linearly isomorphic to a Hilbert space.
\end{theorem}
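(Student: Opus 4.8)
The plan is to deduce, for the reverse implication, that $Z$ has (Rademacher) type $2$ and cotype $2$, and then to invoke Kwapien's theorem. The forward implication is immediate: if $Z$ is linearly isomorphic to a Hilbert space via $T$ with $\|T\|\,\|T^{-1}\| \le K$, then the constant family $\varphi_\tau \equiv T$ witnesses a $K$-threshold-embedding into a Hilbert space. So assume $Z$ $K$-threshold-embeds into a Hilbert space $H$.

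\emph{Type $2$.} A Hilbert space has Enflo type $2$ with a universal constant (the parallelogram law on the discrete cube; alternatively, $M_2(H)<\infty$ together with \cite[Prop.~1]{NS02}). Hence by Proposition~\ref{prop:enflo}, $Z$ has Enflo type $2$ with constant $E=O(K)$. I would then specialize the Enflo type inequality to linear maps: for $v_1,\dots,v_n\in Z$ take $f:\{0,1\}^n\to Z$, $f(x)=\sum_i x_i v_i$. The $\ell_1$-antipodal pairs satisfy $f(x)-f(\bar x)=\sum_i(2x_i-1)v_i$, so summing over $x$ produces $2^n\,\E_\varepsilon\bigl\|\sum_i\varepsilon_i v_i\bigr\|^2$, while the $\ell_1$-edges contribute $2^n\sum_i\|v_i\|^2$; the Enflo type $2$ inequality therefore collapses to $\E_\varepsilon\bigl\|\sum_i\varepsilon_i v_i\bigr\|^2\le E^2\sum_i\|v_i\|^2$, i.e.\ $T_2(Z)=O(K)$. (Alternatively, one may first apply Theorem~\ref{thm:hilbert} for Markov type $2$ and then \cite[Prop.~1]{NS02}; Proposition~\ref{prop:enflo} is the shortcut that avoids Theorem~\ref{thm:hilbert}.)

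\emph{Cotype $2$.} First I would promote the threshold embedding to a coarse embedding of $Z$ into a Hilbert space. By homogeneity of the norm, rescaling a single scale-$1$ map yields, for each $k\ge 0$, a $1$-Lipschitz $\varphi_k:Z\to H$ with $\|z-z'\|\ge 2^k\implies\|\varphi_k(z)-\varphi_k(z')\|\ge 2^k/K$. Put $\Psi=\bigoplus_{k\ge 0}(k+1)^{-1}\varphi_k:Z\to\ell_2(H)$, where $\ell_2(H)$ is the $\ell_2$-sum of countably many copies of $H$ (again a Hilbert space). Since $\sum_k(k+1)^{-2}<\infty$, $\Psi$ is $O(1)$-Lipschitz; and choosing $k$ with $2^k\le\|z-z'\|<2^{k+1}$ gives $\|\Psi(z)-\Psi(z')\|\ge\frac{2^k}{K(k+1)}\gtrsim\frac{\|z-z'\|}{K\log(2+\|z-z'\|)}$, which is eventually nondecreasing and tends to $\infty$. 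Thus $Z$ coarsely embeds into a Hilbert space, and by the theorem of Aharoni--Maurey--Mityagin \cite{AMM85} --- a Banach space that embeds coarsely (or uniformly) into a Hilbert space has cotype $2$ --- we obtain $C_2(Z)<\infty$.

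Finally, Kwapien's theorem applies: $Z$ has type $2$ and cotype $2$, hence is linearly isomorphic to a Hilbert space (with isomorphism constant $\le T_2(Z)\,C_2(Z)$). The type-$2$ step is entirely elementary given Proposition~\ref{prop:enflo}; I expect the only delicate point to be the cotype step --- specifically, verifying that the weighted sum $\Psi$ really does furnish the kind of embedding that \cite{AMM85} requires for the cotype-$2$ conclusion, and here the Banach-space homogeneity used to build $\Psi$ is essential, since for general metric spaces threshold-embeddability into Hilbert space is strictly weaker than coarse embeddability.
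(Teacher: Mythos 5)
Your proof is correct and, on the cotype side, takes a genuinely different route from the paper. For the type-$2$ step you and the paper agree: both pass through Proposition~\ref{prop:enflo} to obtain Enflo type $2$, and your explicit collapse of the Enflo inequality to Rademacher type $2$ for the linear map $f(x)=\sum_i x_i v_i$ is exactly what the paper delegates to the citation of Enflo. For the cotype step, the paper proves and applies Lemma~\ref{lem:ass}: an Assouad-style snowflaking construction, with weights $2^{-\eps n}$ and the normalization of \cite{MN04}, that bi-Lipschitz embeds $(Z,\|\cdot\|^{1-\eps})$ into Hilbert space and hence uniformly embeds $Z$, after which \cite{AMM85} gives cotype $2$. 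You instead form the weighted direct sum $\Psi=\bigoplus_{k\ge 0}(k+1)^{-1}\varphi_{2^k}$ and verify directly that it is a coarse embedding. This is simpler --- no snowflake exponent, no normalization lemma --- at the cost that it produces a coarse rather than uniform embedding, so strictly you should cite \cite{R06} (coarse) rather than \cite{AMM85} (uniform) for the cotype-$2$ conclusion; the paper notes both facts.

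One small correction to your closing remark: the claim that threshold-embeddability into Hilbert space is ``strictly weaker than coarse embeddability'' for general metric spaces, and that the Banach-space homogeneity is therefore essential to build $\Psi$, is not right. The threshold-embedding definition already provides a map $\varphi_\tau$ at every scale $\tau>0$, which after normalization is $1$-Lipschitz with $d(x,y)\ge\tau\implies\|\varphi_\tau(x)-\varphi_\tau(y)\|\ge\tau/K$; the very same weighted sum $\bigoplus_{k\ge 0}(k+1)^{-1}\varphi_{2^k}$ is then an $O(1)$-Lipschitz coarse embedding of any metric space that threshold-embeds into Hilbert space. What genuinely uses the linear structure of $Z$ is the conclusion of the argument --- cotype $2$ and Kwapien's theorem are intrinsically linear --- not the construction of the coarse embedding itself.
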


As a tool, we will need the next result which follows from techniques of \cite{Ass83}
and whose proof we defer for a moment.  We recall that if $(X,d)$ is a metric space and
$\e \in (0,1]$, we use $(X,d^{\e})$ to denote the metric space with distance $d^{\e}(x,y) = d(x,y)^{\e}$.

\begin{lemma}[\cite{Ass83}]
\label{lem:ass}
If $(X,d)$ threshold-embeds into $L_2$ then for every $\e \in (0,1)$, the space $(X,d^{1-\e})$ bi-Lipschitz embeds into a Hilbert space.
\end{lemma}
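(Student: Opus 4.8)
The plan is to follow Assouad's snowflaking technique: build the embedding of $(X,d^{1-\e})$ as an $\ell_2$-direct sum of rescaled copies of the threshold maps at the dyadic scales $\tau=2^j$, $j\in\mathbb Z$. Normalize the given family so that $\|\varphi_\tau\|_{\Lip}=1$, whence $\|\varphi_\tau(x)-\varphi_\tau(y)\|\le d(x,y)$ always and $d(x,y)\ge\tau\implies\|\varphi_\tau(x)-\varphi_\tau(y)\|\ge\tau/K$ (we may assume $K\ge1$; scales $\tau>\mathrm{diam}(X)$, on which $\varphi_\tau$ might be constant, are vacuous). For such a sum to converge at small scales one needs each map to have range of diameter $O(\tau)$, which a $1$-Lipschitz map need not have; so the one new ingredient is a way to \emph{truncate} $\varphi_\tau$ without destroying the threshold inequality.

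First I would carry out this truncation using the feature map of the Gaussian kernel on $L_2$. The kernel $k_\Delta(u,v)=e^{-\|u-v\|^2/\Delta^2}$ is positive definite on $L_2$, so there are a Hilbert space $\mathcal H$ and a map $\Phi_\Delta:L_2\to\mathcal H$ with $\langle\Phi_\Delta(u),\Phi_\Delta(v)\rangle=k_\Delta(u,v)$, hence $\|\Phi_\Delta(u)-\Phi_\Delta(v)\|^2=2\bigl(1-e^{-\|u-v\|^2/\Delta^2}\bigr)$; after rescaling by $\Delta$ one may take $\mathcal H$ independent of $\Delta$. Set $\psi_\Delta=\Delta\cdot\Phi_\Delta\circ\varphi_\Delta:X\to\mathcal H$, so $\|\psi_\Delta(x)-\psi_\Delta(y)\|^2=2\Delta^2\bigl(1-e^{-\|\varphi_\Delta(x)-\varphi_\Delta(y)\|^2/\Delta^2}\bigr)$. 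From $1-e^{-t}\le t$ this is $\le 2\|\varphi_\Delta(x)-\varphi_\Delta(y)\|^2\le 2\,d(x,y)^2$, so $\psi_\Delta$ is $\sqrt2$-Lipschitz; it is always $<2\Delta^2$, so $\mathrm{diam}(\psi_\Delta(X))\le\sqrt2\,\Delta$; and if $d(x,y)\ge\Delta$ then $\|\varphi_\Delta(x)-\varphi_\Delta(y)\|^2/\Delta^2\ge K^{-2}\in(0,1]$, so using $1-e^{-t}\ge t/2$ on $[0,1]$ we get $\|\psi_\Delta(x)-\psi_\Delta(y)\|^2\ge\Delta^2/K^2$. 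Dividing by $\sqrt2$, we obtain, for every $\Delta>0$, a $1$-Lipschitz map $X\to\mathcal H$ with range of diameter $\le\Delta$ satisfying $d(x,y)\ge\Delta\implies\|\psi_\Delta(x)-\psi_\Delta(y)\|\ge\Delta/(\sqrt2 K)$.

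With these bounded maps in hand, the Assouad sum finishes the job. Put $\psi_j:=\psi_{2^j}$, fix $x_0\in X$, and define $F(x)=\bigl(2^{-j\e}\,(\psi_j(x)-\psi_j(x_0))\bigr)_{j\in\mathbb Z}\in\ell_2(\mathbb Z;\mathcal H)$. Fix $x\ne y$, set $r=d(x,y)$, and take $m\in\mathbb Z$ with $2^m\le r<2^{m+1}$. Then $\|\psi_j(x)-\psi_j(y)\|\le\min(r,2^j)$ for all $j$ (by $1$-Lipschitzness and boundedness), and $\|\psi_j(x)-\psi_j(y)\|\ge 2^j/(\sqrt2 K)$ for $j\le m$. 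Splitting $\|F(x)-F(y)\|^2=\sum_j 4^{-j\e}\|\psi_j(x)-\psi_j(y)\|^2$ at $j=m$: the part $j\le m$ lies between $(2K^2)^{-1}$ and $1$ times $\sum_{j\le m}4^{j(1-\e)}=\Theta_\e(4^{m(1-\e)})$ (a geometric series, convergent since $\e<1$), while the part $j>m$ is at most $r^2\sum_{j>m}4^{-j\e}=O_\e(r^2\,4^{-m\e})=O_\e(4^{m(1-\e)})$ (a geometric series, convergent since $\e>0$). Since $4^m\asymp r^2$, all three quantities are $\asymp_\e r^{2(1-\e)}$, so $F$ is a bi-Lipschitz embedding of $(X,d^{1-\e})$ into the Hilbert space $\ell_2(\mathbb Z;\mathcal H)$, with distortion $O_\e(K)$.

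The step I expect to be the crux is the truncation: a priori it is unclear how to shrink the range of a $1$-Lipschitz map into $L_2$ at a given scale, and indeed infinite-dimensional Hilbert space admits no scale-$\Delta$, bounded-range self-embedding, so a ``random partition of $L_2$'' approach in the spirit of Theorem~\ref{thm:partitions} is not available. The Gaussian feature map resolves this precisely because it compresses all distances to $O(\Delta)$ while distorting sub-$\Delta$ distances by only a bounded factor -- exactly the trade-off that makes the small-scale tail of the Assouad sum summable. Everything else reduces to the two geometric-series estimates above, which are routine.
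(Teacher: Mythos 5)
Your proof is correct, and the large-scale architecture is identical to the paper's: take the maps $\varphi_{2^j}$ at dyadic scales, weight them by $2^{-j\e}$, and pack them into an $\ell_2$-direct sum, then split the resulting series at the scale $m$ with $2^m\le d(x,y)<2^{m+1}$ and sum two geometric series. The one genuine point of divergence is how you get the bounded-range property that makes the small-scale tail $\sum_{j\le m}4^{-j\e}\|\cdot\|^2$ converge. The paper obtains maps with $\sup_x\|\varphi_\tau(x)\|\le\tau$ by citing \cite[Lem.~5.2]{MN04} as a black box, whereas you construct the truncation explicitly by post-composing with the Gaussian kernel feature map $\Phi_\Delta$, i.e.\ using that $u\mapsto 2\bigl(1-e^{-\|u\|^2/\Delta^2}\bigr)$ is a squared Hilbertian metric on $L_2$. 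Your estimates for that step are right (in particular you correctly use only $1-e^{-t}\ge t/2$ at $t=1/K^2\le 1$ together with the monotonicity of $1-e^{-t}$, so the argument survives when $\|\varphi_\Delta(x)-\varphi_\Delta(y)\|^2/\Delta^2>1$), and the geometric-series bookkeeping, including $4^m\asymp d(x,y)^2$ and the distortion $O_\e(K)$, is correct. What your route buys is a self-contained argument that does not lean on the Mendel--Naor truncation lemma; what the paper's route buys is brevity. Your closing remark correctly identifies the crux and why a naive ``retract $L_2$ onto a ball of radius $\Delta$'' approach cannot work with uniformly bounded Lipschitz constant at all scales, which is precisely the obstruction the kernel map circumvents.
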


\begin{proof}[Proof of Theorem \ref{thm:iso}]
Since $Z$ threshold-embeds into Hilbert space, it has Markov type 2 by Theorem \ref{thm:main},
hence it also has linear type 2 \cite{Ball92}.
One also has the following alternate and simpler line of argument:  $Z$ has Enflo type 2 by Proposition \ref{prop:enflo}, hence
it also has linear type 2 \cite{Enflo70}.

On the other hand, by Lemma \ref{lem:ass}, $Z$ uniformly embeds
into Hilbert space, and thus by \cite{AMM85} (see also \cite[Cor. 8.17]{BL00}), $Z$ has cotype 2.
Now Kwapien's theorem \cite{Kwapien72} implies that $Z$ is isomorphic to a Hilbert space.
\end{proof}

We now prove Lemma \ref{lem:ass}.  The argument is folklore, but we could not find it
written explicitly.

\begin{proof}[Proof of Lemma \ref{lem:ass}]
Suppose that $\{\varphi_{\tau} : X \to L_2 : \tau > 0 \}$ is a $K$-threshold-embedding of $X$ into $L_2$.
Using \cite[Lem. 5.2]{MN04}, we may assume that $\{\varphi_{\tau} : X \to L_2 : \tau > 0\}$ is a $2K$-threshold-embedding
with the additional property that $\sup_{x \in X} \|\varphi_{\tau}(x)\| \leq \tau$ for all $\tau > 0$.
By scaling, we may assume that each $\f_{\tau}$ is 1-Lipschitz.

Let $\{e_n\}_{n \in \mathbb Z}$ be an orthornormal basis for $\ell_2$ and define
the map $\Phi : X \to L_2 \otimes \ell_2$ by
$$
\Phi(x) = \sum_{n \in \mathbb Z} 2^{-\e n} \f_{2^n} \otimes e_n\,.
$$

Fix $x,y \in X$  and let $m \in \mathbb Z$ be such that $d(x,y) \in [2^{m}, 2^{m+1})$. On the one hand, we have
$$
\|\Phi(x)-\Phi(y)\|_{L_2 \otimes \ell_2} \geq 2^{-\e m} \|\f_{2^m}(x) - \f_{2^m}(y)\| \geq 2^{-\e m} \frac{2^{m}}{2K} \geq \frac{d(x,y)^{1-\e}}{4K}\,.
$$
On the other hand,
\begin{eqnarray*}
\|\Phi(x)-\Phi(y)\|_{L_2 \otimes \ell_2}^2 &=& \sum_{n \in \mathbb Z} 2^{-2\e n} \|\f_{2^n}(x)-\f_{2^n}(y)\|^2 \\
&\leq &
\sum_{n < m} 2^{-2\e n} 2^{2n} + 2^{2(m+1)} \sum_{n \geq m} 2^{-2\e n} \\
&\leq &
O\left(\frac{1}{\e}\right) 2^{2(1-\e)m} \\
&\leq &
O\left(\frac{1}{\e}\right) d(x,y)^{2(1-\e)}\,.
\end{eqnarray*}
\end{proof}

We can now move on to some partial non-linear analogs of Kwapien's theorem.
By \cite{AMM85} and \cite{R06}, respectively,
it is known that if a Banach space $Z$ admits a uniform or coarse
embedding into Hilbert space, then $Z$ has cotype 2.
On the other hand,
there do exist Banach spaces of cotype 2 that do not uniformly or coarse embed
into Hilbert space (e.g. $C_1$ the Schatten trace class);
see \cite[\S 8.2]{BL00}.

\begin{theorem}\label{thm:kwap}
Suppose that a Banach space $Z$ admits a coarse or uniform embedding into a Hilbert space.
Then a subset $X \subseteq Z$ has Markov type 2 if and only if $X$ threshold-embeds
into a Hilbert space.
\end{theorem}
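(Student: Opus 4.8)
The plan is to prove the two implications separately. The implication ``$X$ threshold-embeds into a Hilbert space $\implies X$ has Markov type~$2$'' is immediate from Theorem~\ref{thm:main} with $p=2$, since a Hilbert space is $2$-uniformly smooth; so all of the content is in the converse. Assume then that $X\subseteq Z$ has Markov type~$2$ and that $Z$ coarsely or uniformly embeds into a Hilbert space. The goal is to produce, for every $\tau>0$, a Lipschitz map $\varphi_\tau\colon X\to L_2$ such that $d(x,y)\ge\tau$ forces $\|\varphi_\tau(x)-\varphi_\tau(y)\|\ge\|\varphi_\tau\|_{\Lip}\,\tau/K$, with $K$ independent of $\tau$.

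The first step is to turn the hypothesis on $Z$ into a \emph{snowflake} bi-Lipschitz embedding of $X$. Using the techniques of \cite{Ass83} (the reverse direction of Lemma~\ref{lem:ass}) together with the known characterizations of coarse/uniform embeddability of Banach spaces into Hilbert space (\cite{AMM85}, \cite[\S8]{BL00}) --- the point being that a Banach space embeds this way only through a map whose modulus is comparable to a power of the norm --- the hypothesis on $Z$ implies that $(Z,\|\cdot\|_Z^{1-\e})$ bi-Lipschitz embeds into a Hilbert space for every $\e\in(0,1)$, and hence so does its subset $(X,d^{1-\e})$, with some distortion $D=D(\e)<\infty$. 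Fix, say, $\e=\tfrac12$, and let $\psi\colon X\to L_2$ satisfy $D^{-1}d(x,y)^{1/2}\le\|\psi(x)-\psi(y)\|\le d(x,y)^{1/2}$ for all $x,y\in X$.

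The second step brings in Markov type~$2$ through Ball's extension theorem. Since $X$ has Markov type~$2$ and $L_2$ is $2$-uniformly convex, Theorem~\ref{thm:ballext} yields a constant $C=C(M_2(X))$ such that any Lipschitz map from a subset of $X$ into $L_2$ extends to all of $X$ at the cost of a factor $C$ in the Lipschitz constant. Now fix $\tau>0$, let $N$ be an integer to be chosen as an absolute multiple of $C^2D^2$ (critically, \emph{not} depending on $\tau$), and let $\mathcal N\subseteq X$ be a maximal $(\tau/N)$-separated set, so that every point of $X$ lies within $\tau/N$ of $\mathcal N$ and all pairwise distances in $\mathcal N$ are at least $\tau/N$. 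On $\mathcal N$ the map $\psi$ is Lipschitz with respect to $d$ itself --- indeed $\|\psi(u)-\psi(v)\|\le d(u,v)^{1/2}\le(N/\tau)^{1/2}\,d(u,v)$ for $u,v\in\mathcal N$ --- so by Ball's theorem $\psi|_{\mathcal N}$ extends to $\varphi_\tau\colon X\to L_2$ with $\varphi_\tau|_{\mathcal N}=\psi|_{\mathcal N}$ and $\|\varphi_\tau\|_{\Lip}\le C(N/\tau)^{1/2}$. For the threshold property, take $x,y$ with $d(x,y)\ge\tau$ and net points $x',y'\in\mathcal N$ with $d(x,x'),d(y,y')<\tau/N$: then $d(x',y')\ge\tau/2$, so $\|\varphi_\tau(x')-\varphi_\tau(y')\|=\|\psi(x')-\psi(y')\|\ge D^{-1}(\tau/2)^{1/2}$, whereas $\|\varphi_\tau(x)-\varphi_\tau(x')\|$ and $\|\varphi_\tau(y)-\varphi_\tau(y')\|$ are each at most $C(N/\tau)^{1/2}(\tau/N)=C(\tau/N)^{1/2}$. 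Taking $N$ a large enough absolute multiple of $C^2D^2$ makes the latter at most a quarter of the former, so the triangle inequality gives $\|\varphi_\tau(x)-\varphi_\tau(y)\|\ge\tfrac12 D^{-1}(\tau/2)^{1/2}$; dividing by $\|\varphi_\tau\|_{\Lip}\,\tau\le CN^{1/2}\tau^{1/2}$ shows the threshold ratio is bounded below by an absolute multiple of $(CN^{1/2}D)^{-1}$, i.e. $\varphi_\tau$ is a $K$-threshold map with $K=O(C^2D^2)$, a constant depending only on $M_2(X)$ and in particular independent of $\tau$.

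The routine parts are the triangle-inequality estimate and the choice of $N$. The real obstacle is the first step: extracting from the purely qualitative hypothesis ``$Z$ coarsely or uniformly embeds into a Hilbert space'' a genuine bi-Lipschitz embedding of a snowflake $(X,d^{1-\e})$ into a Hilbert space with finite distortion --- this is where the linear structure of $Z$ must be used, since the corresponding statement fails for coarse embeddings of general metric spaces --- together with the bookkeeping needed to combine the three parameters ($N$, the Ball extension constant $C=C(M_2(X))$, and the snowflake distortion $D=D(\e)$) so that the resulting threshold constant $K$ carries no dependence on the scale $\tau$.
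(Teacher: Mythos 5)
The second half of your argument---restricting a thresholding map to a $(\tau/N)$-net, extending it via Ball's theorem, and verifying the threshold property by the triangle inequality with $N$ a large absolute multiple of $C^2D^2$---is essentially identical to the paper's Lemma \ref{lem:lambda}, and the bookkeeping there is correct. The gap is entirely in the first step.

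The assertion that ``the hypothesis on $Z$ implies that $(Z,\|\cdot\|_Z^{1-\e})$ bi-Lipschitz embeds into a Hilbert space for every $\e\in(0,1)$'' is false. Take $Z=\ell_1$, which does uniformly and coarsely embed into $L_2$, and take $\theta=1-\e>\tfrac12$. The Hamming cube $\{0,1\}^n\subset\ell_1$ with the snowflaked metric has diagonals of length $n^\theta$ and edges of length $1$, so the Enflo-type-$2$ inequality for Hilbert space forces any bi-Lipschitz map of $(\{0,1\}^n,\|\cdot\|_1^\theta)$ into $L_2$ to have distortion at least $n^{\theta-1/2}\to\infty$. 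Hence $(\ell_1,\|\cdot\|^{1-\e})$ does not bi-Lipschitz embed into Hilbert space for $\e<\tfrac12$. (You only actually invoke the case $\e=\tfrac12$, which does hold for $\ell_1$, but you give no argument that $\theta=\tfrac12$ works for an arbitrary Banach space that coarsely or uniformly embeds into Hilbert space; your heuristic that the modulus is ``comparable to a power of the norm'' is not something the cited sources establish, and it is the real content missing from the step.)

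This is exactly the issue the paper's definition of $\Lambda$-nontriviality is designed to sidestep. Observe what your net argument actually uses from $\psi$: a thresholding lower bound at scale $\tau/2$, and a Lipschitz bound on pairs at distance $\geq \tau/N$. You never need a uniform two-sided (snowflake) estimate across all scales simultaneously; you need, for each $\tau$, a single map with those two $\tau$-local properties, with constants independent of $\tau$. That weaker condition is precisely $\Lambda$-nontriviality, and---unlike the snowflake claim---it \emph{does} follow from a coarse or uniform embedding of the Banach space $Z$: one simply rescales the embedding so that the modulus is favorable at one scale and invokes homogeneity of the norm (this is the paper's argument in the two lemmas following Lemma \ref{lem:lambda}). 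In short, your proof upgrades the hypothesis to a stronger (and in general false) snowflake statement before running an argument that only needs the weaker $\Lambda$-nontrivial property; replacing that first step with the paper's $\Lambda$-nontriviality deduction makes the proof go through and, in fact, reproduces the paper's proof.
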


To prove this, we need a few definitions.
Consider a metric space $(X,d)$.   We recall that an {\em $\e$-net} is a maximal subset $N \subseteq X$ such that $d(x,y) \geq \e$ for all $x,y \in N$.
Say that a map $f : X \to L_2$ is {\em $\tau$-thresholding} if for all $x,y \in X$, we have $d(x,y) \geq \tau \implies \|f(x)-f(y)\| \geq \tau$.
Define the parameter
$$
\Lambda_{\tau}(X,\e) \defeq \inf \left\{ \sup_{\stackrel{x,y \in X}{d(x,y) \geq \e \tau}} \frac{\e \|f(x)-f(y)\|_2}{d(x,y)} : \textrm{$f : X \to L_2$ is $\tau$-thresholding} \right\}\,.
$$
Finally, we define $\Lambda(X,\e) = \sup_{\tau > 0} \Lambda_{\tau}(X,\e)$.  Any separable metric space $X$ trivially satisfies $\Lambda(X,\e) \leq 1$.
We say that $(X,d)$ is {\em $\Lambda$-nontrivial} if $\liminf_{\e \to 0} \Lambda(X,\e) = 0$.
Otherwise, we say that $(X,d)$ is {\em $\Lambda$-trivial.}
The next result is straightforward.

\begin{lemma}\label{lem:subset}
If $(X,d)$ is $\Lambda$-nontrivial, then so is a subset $Y \subseteq X$.
\end{lemma}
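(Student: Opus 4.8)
The plan is to establish the pointwise monotonicity $\Lambda(Y,\e)\le\Lambda(X,\e)$ for every $\e\in(0,1]$, from which the lemma is immediate: since $0\le\Lambda(Y,\e)\le\Lambda(X,\e)$ and $\liminf_{\e\to0}\Lambda(X,\e)=0$, we get $\liminf_{\e\to0}\Lambda(Y,\e)=0$, which is precisely the statement that $Y$ is $\Lambda$-nontrivial.

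First I would fix $\tau>0$ and $\e\in(0,1]$ and compare $\Lambda_\tau(Y,\e)$ with $\Lambda_\tau(X,\e)$. The key observation is that restriction preserves the thresholding property: if $f:X\to L_2$ is $\tau$-thresholding, then so is $f|_Y:Y\to L_2$, since the implication $d(x,y)\ge\tau\implies\|f(x)-f(y)\|_2\ge\tau$ is only being required on the smaller pair set $Y\times Y\subseteq X\times X$ and the target Hilbert space is unchanged. For the same reason, the relevant supremum shrinks: $\sup_{x,y\in Y,\,d(x,y)\ge\e\tau}\frac{\e\|f(x)-f(y)\|_2}{d(x,y)}\le\sup_{x,y\in X,\,d(x,y)\ge\e\tau}\frac{\e\|f(x)-f(y)\|_2}{d(x,y)}$.

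Next I would take infima. The class of $\tau$-thresholding maps $g:Y\to L_2$ over which $\Lambda_\tau(Y,\e)$ is defined contains, in particular, all restrictions $f|_Y$ of $\tau$-thresholding maps $f:X\to L_2$; combining this with the displayed inequality gives $\Lambda_\tau(Y,\e)\le\Lambda_\tau(X,\e)$. Taking the supremum over $\tau>0$ yields $\Lambda(Y,\e)\le\Lambda(X,\e)$, which completes the proof.

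There is essentially no obstacle here; the only point that needs care is tracking the direction of the $\inf$/$\sup$ monotonicity when passing from $X$ to $Y$ — namely that the constraint set of thresholding maps can only grow while the quantity being optimized can only shrink — and recording that neither the thresholding constraint nor the target Hilbert space changes under restriction. (Separability and the trivial bound $\Lambda(Y,\e)\le1$ are also inherited from $X$, so no additional hypotheses on $Y$ are needed.)
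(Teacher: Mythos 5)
Your proof is correct, and since the paper calls this result ``straightforward'' and gives no argument, what you wrote is precisely the monotonicity observation the authors must have had in mind: restrictions of $\tau$-thresholding maps on $X$ are $\tau$-thresholding on $Y$, the supremum in the definition of $\Lambda_\tau$ shrinks under restriction, hence $\Lambda_\tau(Y,\e)\le\Lambda_\tau(X,\e)$ and therefore $\Lambda(Y,\e)\le\Lambda(X,\e)$, from which $\Lambda$-nontriviality passes to subsets.
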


Now we are in position to use Markov type 2 in conjunction with Ball's extension theorem.

\begin{lemma}\label{lem:lambda}
If $(X,d)$ is $\Lambda$-nontrivial, then $X$ has Markov type 2 if and only if $X$ threshold embeds into $L_2$.
\end{lemma}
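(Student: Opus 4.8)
The plan is to establish the two implications separately. One direction — a threshold embedding into $L_2$ yields Markov type $2$ — is immediate from Theorem~\ref{thm:main}: $L_2$ is a Hilbert space, hence $2$-uniformly smooth with $S_2(L_2)=O(1)$, so $M_2(X)=O(K)<\infty$ whenever $X$ $K$-threshold-embeds into $L_2$. This half uses nothing about $\Lambda$-nontriviality; the content is in the converse, which I would prove by feeding the maps supplied by $\Lambda$-nontriviality into Ball's extension theorem (Theorem~\ref{thm:ballext}).

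So assume $M_2(X)<\infty$ and $X$ is $\Lambda$-nontrivial, and let $C=C(X,L_2)$ be the constant from Ball's theorem (applicable since $L_2$ is $2$-uniformly convex). First I would fix $\eta=\tfrac1{4C}$ and, using $\liminf_{\e\to0}\Lambda(X,\e)=0$, choose $\e\in(0,\tfrac12)$ with $\Lambda(X,\e)<\eta$. Then for each target scale $\sigma>0$ I would set $\tau=\sigma/2$ and use $\Lambda_\tau(X,\e)\le\Lambda(X,\e)<\eta$ to obtain a $\tau$-thresholding map $f:X\to L_2$ with $\|f(x)-f(y)\|\le\tfrac\eta\e\,d(x,y)$ for all $x,y$ with $d(x,y)\ge\e\tau$. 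Restricting $f$ to a maximal $\e\tau$-separated set $N\subseteq X$ (an $\e\tau$-net), every pair of distinct net points is $\ge\e\tau$ apart, so $f|_N$ is $\tfrac\eta\e$-Lipschitz on $(N,d)$ while remaining $\tau$-thresholding. Ball's theorem then extends $f|_N$ to $\varphi_\sigma:X\to L_2$ with $\varphi_\sigma|_N=f|_N$ and $\|\varphi_\sigma\|_{\Lip}\le C\tfrac\eta\e=\tfrac1{4\e}$.

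To verify that $\{\varphi_\sigma\}_{\sigma>0}$ is a threshold embedding, take $x,y$ with $d(x,y)\ge\sigma$ and pick $x',y'\in N$ with $d(x,x'),d(y,y')<\e\tau$ (maximality of $N$). Then $d(x',y')\ge\sigma-2\e\tau=\sigma(1-\e)\ge\tau$, so $\|\varphi_\sigma(x')-\varphi_\sigma(y')\|=\|f(x')-f(y')\|\ge\tau$ by the thresholding property, and the triangle inequality together with the Lipschitz bound gives $\|\varphi_\sigma(x)-\varphi_\sigma(y)\|\ge\tau-\|\varphi_\sigma\|_{\Lip}\cdot2\e\tau\ge\tau-2\e\tau\cdot\tfrac1{4\e}=\tfrac\tau2=\tfrac\sigma4$. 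Since $\|\varphi_\sigma\|_{\Lip}\le\tfrac1{4\e}$, this reads $\|\varphi_\sigma(x)-\varphi_\sigma(y)\|\ge\tfrac\sigma4\ge\e\|\varphi_\sigma\|_{\Lip}\sigma=\tfrac{\|\varphi_\sigma\|_{\Lip}}{K}\sigma$ with $K=1/\e$, and $\e$ depends only on $X$, so $X$ $K$-threshold-embeds into $L_2$.

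The step I expect to be the crux is the calibration of the constants. The net map $f|_N$ can have Lipschitz constant as large as $\eta/\e$, so the perturbation term $\|\varphi_\sigma\|_{\Lip}\cdot2\e\tau$ incurred when passing from net points back to $x,y$ threatens to swamp the lower bound $\tau$; the point is that after the extension this term is $2C\eta\tau$, independent of $\e$, so one must fix $\eta$ small relative to $C(X,L_2)$ \emph{before} extracting $\e$ from $\Lambda$-nontriviality — which is precisely why $\Lambda$-nontriviality (rather than $\Lambda(X,\e_0)$ being small for a single fixed $\e_0$) is the natural hypothesis. A secondary point is the bookkeeping of scales: using $\tau=\sigma/2$ rather than $\tau=\sigma$ ensures that $d(x',y')\ge\tau$ survives the $\e\tau$-net approximation, so that the thresholding inequality for $f|_N$ can actually be invoked.
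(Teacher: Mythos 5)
Your proposal is correct and follows essentially the same path as the paper's proof: fix a tolerance $\eta$ small compared to Ball's extension constant, use $\Lambda$-nontriviality to extract a single $\e$ with $\Lambda(X,\e)<\eta$, for each target scale take the associated thresholding map restricted to an $\e\tau$-net so it becomes $O(\eta/\e)$-Lipschitz, extend via Ball's theorem (which is where Markov type $2$ enters), and finish with a triangle-inequality rounding argument. The bookkeeping of constants differs slightly (the paper uses a factor $8$ where you use $4$, and parametrizes the threshold at $\tau/2$ directly rather than introducing $\sigma=2\tau$), but the argument is the same in structure and substance, including your closing observation that $\eta$ must be fixed in terms of the extension constant \emph{before} $\e$ is extracted from the $\liminf$.
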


\begin{proof}
In light of Theorem \ref{thm:main}, we need only prove the only if direction.
Suppose that $M_2(X) \leq C$.  By Ball's extension theorem (Theorem \ref{thm:ballext}),
there exists a constant $\hat C = O(C)$ such that every Lipschitz
map from a subset of $X$ into $L_2$ admits a Lipschitz extension whose
Lipschitz constant is larger by at most a factor of $\hat C$.

Fix some $\tau > 0$.
Since $X$ is $\Lambda$-nontrivial, there exists an $\e < \frac14$
and a mapping $f : X \to L_2$ such that
for any $(\e\tau)$-net $N \subseteq X$, we have
$\|f|_N\|_{\Lip} \leq \frac{1}{8\hat C \e}$ and which satisfies,
for all $x,y \in X$,
\begin{equation}\label{eq:tau}
d(x,y) \geq \frac{\tau}{2} \implies \|f(x)-f(y)\| \geq \frac{\tau}{2}\,.
\end{equation}
Let $\tilde f : X \to L_2$ be the extension of $f|_N$ guaranteed by Ball's extension theorem,
so that $\|\tilde f\|_{\Lip} \leq  \frac{1}{8\e}$.

Fix any $x,y \in X$ with $d(x,y) \geq \tau$ and let $x',y' \in N$ be such that $d(x,x'), d(y,y') \leq \e \tau$.
In particular, since $\e < \frac14$, we have $d(x',y') \geq \tau/2$.  Using the triangle inequality yields
\begin{eqnarray*}
\|\tilde f(x)-\tilde f(y)\| &\geq & \|\tilde f(x') - \tilde f(y')\| - 2 \e \tau \|\tilde f\|_{\Lip}\\
&\geq &
\|f(x')-f(y')\| - \frac{\tau}{4} \\
&\geq &
\frac{\tau}{2} - \frac{\tau}{4} = \frac{\tau}{4}\,,
\end{eqnarray*}
where in the final line we have used \eqref{eq:tau}.
Since $\tau > 0$ was arbitrary, this proves that $X$ threshold-embeds into $L_2$, completing the proof.
\end{proof}

The next two lemmas, combined with Lemmas \ref{lem:subset} and \ref{lem:lambda}, comprise a proof of Theorem \ref{thm:kwap}.

\begin{lemma}
If a Banach space $Z$ admits a {\em coarse embedding} into $L_2$ then $Z$ is $\Lambda$-nontrivial.
\end{lemma}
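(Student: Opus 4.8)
The plan is to fix $\tau > 0$, use the coarse embedding $f : Z \to L_2$ with gauges $\alpha, \beta$, and build a $\tau$-thresholding map whose Lipschitz behavior on far-apart points we can control. The key structural feature to exploit is that $Z$ is a Banach space, so we can dilate: the map $z \mapsto f(z/\lambda)$ coarsely embeds $Z$ with the roles of distances rescaled. First I would compose with a suitable dilation $D_\lambda(z) = z/\lambda$ so that the ``nontrivial scale'' of the composed embedding is moved to a window around $\tau$; precisely, $g(z) := f(D_\lambda(z))$ satisfies $\beta(\|x-y\|/\lambda) \le \|g(x)-g(y)\|_2 \le \alpha(\|x-y\|/\lambda)$. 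Then I would rescale the \emph{codomain}: set $h = c\, g$ for an appropriate constant $c = c(\tau,\lambda)$ so that $h$ becomes $\tau$-thresholding, i.e. $\|x-y\| \ge \tau \implies \|h(x)-h(y)\|_2 \ge \tau$. This is possible because for $\|x-y\| \ge \tau$ we have $\|g(x)-g(y)\|_2 \ge \beta(\tau/\lambda) > 0$ (using that $\beta$ is nondecreasing and $\beta(t)\to\infty$, so $\beta$ is positive for large arguments — choosing $\lambda$ small makes $\tau/\lambda$ large), and we take $c = \tau/\beta(\tau/\lambda)$.

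Next I would estimate the relevant supremum in the definition of $\Lambda_\tau(Z,\e)$, namely
$$
\sup_{\substack{x,y \in Z \\ \|x-y\| \ge \e\tau}} \frac{\e\,\|h(x)-h(y)\|_2}{\|x-y\|}\,.
$$
For such pairs, $\|h(x)-h(y)\|_2 = c\,\|g(x)-g(y)\|_2 \le c\,\alpha(\|x-y\|/\lambda)$, so the ratio is at most $\e c \,\alpha(\|x-y\|/\lambda)/\|x-y\|$. The point is to bound $\alpha(s)/s$ uniformly over the range $s \ge \e\tau/\lambda$ and convert this into something that tends to $0$ as $\e \to 0$ after optimizing over $\lambda$. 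Since $\alpha$ need not be Lipschitz at infinity, the cleanest route is: for a coarse embedding one may without loss of generality assume $\alpha$ is \emph{concave} (a standard reduction — replace $\alpha$ by its least concave majorant, which only increases it and preserves the coarse-embedding property since $\beta$ is unchanged), and a concave nondecreasing $\alpha$ with $\alpha(0)\ge 0$ satisfies $\alpha(s)/s$ nonincreasing in $s$. Hence $\sup_{s \ge \e\tau/\lambda}\alpha(s)/s = \alpha(\e\tau/\lambda)/(\e\tau/\lambda)$, giving a bound on the supremum of
$$
\e \cdot \frac{\tau}{\beta(\tau/\lambda)} \cdot \frac{\alpha(\e\tau/\lambda)}{\e\tau/\lambda} = \frac{\lambda}{\beta(\tau/\lambda)}\cdot\frac{\alpha(\e\tau/\lambda)}{1}\cdot\frac{1}{\tau}\cdot\frac{\tau}{1}\,,
$$
which I would simplify and then bound further using concavity ($\alpha(\e s)\le \e\alpha(s)$ is false; rather $\alpha(\e s) \le \alpha(s)$ and, for the $\e\to 0$ gain, I instead keep $\e$ explicit and choose $\lambda = \lambda(\e)$ so that $\tau/\lambda \to \infty$ slowly, making $\beta(\tau/\lambda)\to\infty$, while $\alpha(\e\tau/\lambda)$ grows more slowly; e.g. take $\lambda$ with $\e/\lambda$ bounded so $\alpha(\e\tau/\lambda) = O(1)$ and $\beta(\tau/\lambda)\to\infty$). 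Tuning this tradeoff shows $\Lambda_\tau(Z,\e) \to 0$ as $\e\to 0$, uniformly in $\tau$, which is exactly $\liminf_{\e\to0}\Lambda(Z,\e)=0$, i.e. $\Lambda$-nontriviality.

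The main obstacle is the bookkeeping in the last step: making the simultaneous choice of the dilation parameter $\lambda$ (depending on $\e$ but \emph{not} on $\tau$, since $\Lambda(Z,\e)=\sup_\tau \Lambda_\tau(Z,\e)$) so that the bound on $\Lambda_\tau(Z,\e)$ is both $\tau$-independent and $o(1)$ as $\e\to 0$. The homogeneity of the norm is what saves us — rescaling $\tau$ can be absorbed into rescaling $\lambda$, so after the substitution the estimate genuinely depends only on $\e$ through the quantity $\e/\lambda$ and on $\lambda$ through $\beta(\tau/\lambda)$; picking, say, $\lambda$ as a function of $\e$ alone with $\lambda \to 0$ and $\e/\lambda \to 0$ handles it. The concavity reduction for $\alpha$ and positivity/unboundedness of $\beta$ are the only real inputs beyond elementary estimates, and both are standard facts about coarse embeddings that I would state with a one-line justification.
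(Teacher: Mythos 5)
Your proposal follows essentially the same route as the paper: use a dilation of $Z$ (available because the norm is homogeneous) together with a rescaling of the embedding's codomain to build a $\tau$-thresholding map, then control its Lipschitz behaviour on pairs at distance $\geq \e\tau$. Both arguments ultimately hinge on the same two inputs: the metric convexity of a Banach space, which makes any coarse embedding Lipschitz for large distances, and the $\tau$-invariance of $\Lambda_\tau(Z,\cdot)$ under dilation. Where the two diverge is in how the first input is deployed. You route it through concavity: replace $\alpha$ by its least concave majorant so that $\alpha(s)/s$ is nonincreasing, then take a supremum at the boundary $s = \e\tau/\lambda$. This works, but note the reduction to a concave $\alpha$ already presupposes $\alpha$ is at most linear — which is precisely the Lipschitz-for-large-distances consequence of metric convexity — so the concavity step is a detour rather than a saving. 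The paper instead normalizes directly so that $\|x-y\| \geq 1 \implies \|f(x)-f(y)\|_2 \leq \|x-y\|$, sets $\e = 1/\tau$ (equivalently $\lambda = \e\tau$ in your notation, so that $\e\tau$ becomes the unit scale), and reads off the bound $1/\beta(\tau)$ with no appeal to concavity at all. Your last paragraph on tuning $\lambda$ is a bit muddled: you insist $\lambda$ should not depend on $\tau$ and ask that $\e/\lambda \to 0$, but neither is needed or correct as stated. Once you observe $\Lambda_\tau(Z,\e)$ is $\tau$-independent (your ``rescaling $\tau$ can be absorbed into rescaling $\lambda$'' remark), the clean choice is $\lambda = \e\tau$, which makes the bound $\alpha(1)/\beta(1/\e)$, manifestly $\tau$-independent and $o(1)$ as $\e \to 0$; the condition $\e/\lambda \to 0$ would instead give $\lambda$ large relative to $\e$, and with $\lambda = \e$ you have $\e/\lambda = 1 \not\to 0$, so the condition you wrote is both unnecessary and inconsistent with the example you give. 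These are fixable slips, not errors of strategy, and the underlying argument is sound.
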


\begin{proof}
Let $f : Z \to L_2$ be a coarse embedding with moduli $\alpha,\beta$.
Since $Z$ is a normed space, its metric is convex, and we can assume
that $f$ is Lipschitz for large distances (see, e.g., \cite[Prop. 1.11]{BL00}).
Thus after rescaling $f$ (and the moduli $\alpha,\beta$), we may assume that
\begin{equation}\label{eq:liplarge}
\|x-y\|_Z \geq 1 \implies \|f(x)-f(y)\|_2 \leq \|x-y\|_Z\,.
\end{equation}

Now, by homogeneity, to show that $Z$ is $\Lambda$-nontrivial, it suffices
to show that for every $\delta > 0$, there exist $\tau, \e$ such that
$$
\Lambda_{\tau}(Z,\e) < \delta\,.
$$
To this end, let $\tau > 0$ be chosen large enough so that $\beta(\tau) > \delta^{-1}$,
and define $g = f \cdot \frac{\tau}{\beta(\tau)}$.  Also, we put $\e = 1/\tau$.  First,
if $\|x-y\|_Z \geq \tau$, then $\|g(x)-g(y)\|_2 \geq \frac{\tau}{\beta(\tau)} \|f(x)-f(y)\|_2 \geq \tau$,
hence $g$ is $\tau$-thresholding.

Next, consider any pair $x,y \in Z$ with $\|x-y\|_Z \geq \e \tau = 1$.  Then,
$$
\e \cdot \frac{\|g(x)-g(y)\|_2}{\|x-y\|_Z} = \frac{\|g(x)-g(y)\|_2}{\tau \|x-y\|_Z} = \frac{\|f(x)-f(y)\|_2}{\beta(\tau) \|x-y\|_Z}  \leq \frac{1}{\beta(\tau)} < \delta\,,
$$
where in the penultimate inequality, we have used \eqref{eq:liplarge}.
\end{proof}

\begin{lemma}
If a Banach space $Z$ admits a uniform embedding into $L_2$ then $Z$ is $\Lambda$-nontrivial.
\end{lemma}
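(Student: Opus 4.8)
The plan is to mimic the proof of the preceding lemma (coarse embeddings $\Rightarrow$ $\Lambda$-nontrivial), but to exploit \emph{small} scales rather than large ones, and to replace the ``Lipschitz for large distances'' input by a convexity/subdivision estimate. Since $Z$ is a Banach space its metric is homogeneous, so $\Lambda_\tau(Z,\e)$ does not depend on $\tau$; thus, as in the previous proof, it suffices to fix $\tau=1$ and to show that for every $\delta>0$ there is an $\e\in(0,1)$ (with $\e\to0$ as $\delta\to0$) and a $1$-thresholding map $g:Z\to L_2$ for which $\sup_{\|x-y\|\geq\e}\tfrac{\e\|g(x)-g(y)\|}{\|x-y\|}<\delta$.

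First I would fix a uniform embedding $f:Z\to L_2$ and record its modulus $\omega_f(s)=\sup\{\|f(x)-f(y)\|:\|x-y\|\leq s\}$. Using convexity of the norm on $Z$ --- subdividing a segment $[x,y]$ with $\|x-y\|=r\geq s$ into $\lceil r/s\rceil$ equal pieces and applying the triangle inequality in $L_2$ --- I get that $\omega_f$ is finite and subadditive, together with the key ratio bound
$$
\frac{\|f(x)-f(y)\|}{\|x-y\|}\;\leq\;\frac{2\,\omega_f(s)}{s}\qquad\text{whenever }\|x-y\|\geq s\,.
$$
Uniform continuity of $f$ gives $\omega_f(s)\to0$ as $s\to0^+$, while uniform continuity of $f^{-1}$ on $f(Z)$ gives a constant $c>0$ with $\|x-y\|\geq1\Rightarrow\|f(x)-f(y)\|\geq c$. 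Now, given $\delta>0$, I would choose $\e\in(0,1)$ so small that $2\omega_f(\e)<\delta c$, and set $g=c^{-1}f$. Then $g$ is $1$-thresholding, and for every $x,y$ with $\|x-y\|\geq\e$ the displayed bound with $s=\e$ yields $\tfrac{\e\|g(x)-g(y)\|}{\|x-y\|}\leq\tfrac{2\omega_f(\e)}{c}<\delta$. Hence $\Lambda(Z,\e)=\Lambda_1(Z,\e)<\delta$; taking $\delta=1/n$ produces $\e_n\to0$ with $\Lambda(Z,\e_n)\to0$, so $Z$ is $\Lambda$-nontrivial.

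The point that needs care is obtaining a uniform upper bound on the relative stretch $\|f(x)-f(y)\|/\|x-y\|$ \emph{at every scale above $\e$} using only uniform continuity; without it the factor $\delta/\e$ in the definition of $\Lambda_\tau$ cannot be matched. This is exactly where convexity of $Z$ enters, via the standard fact that a uniformly continuous map out of a Banach space has a subadditive (hence essentially concave) modulus, so that $s\mapsto\omega_f(s)/s$ is comparable to a non-increasing function --- the same mechanism behind ``uniformly continuous $\Rightarrow$ Lipschitz for large distances'' used in the coarse case, but here applied at scale $\e$ rather than at scale $1$. Note that, unlike a potential reduction to the coarse-embedding lemma, this argument never requires the image $f(Z)$ to be unbounded; the homogeneity remark that lets us pass from $\Lambda_1$ to $\Lambda$ should be invoked explicitly, just as in the previous proof.
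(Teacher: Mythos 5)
Your proof is correct and follows essentially the same route as the paper: use uniform continuity of $f^{-1}$ to secure the thresholding lower bound, use convexity of the norm (subdividing segments) to bound the stretch ratio at scale $\e$ by a modulus-of-continuity quantity, let $\e\to 0$ via uniform continuity of $f$, and invoke homogeneity to pass from a single $\tau$ to $\Lambda$-nontriviality. The only cosmetic difference is that you rescale in the codomain (by $c^{-1}$) and work explicitly with the modulus $\omega_f$, whereas the paper rescales in the domain and writes the same subdivision bound as a supremum over pairs at distance in $[\e\tau/2,\e\tau]$.
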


\begin{proof}
Let $f : Z \to L_2$ be a uniform embedding.  Since $f^{-1}$ is uniformly continuous,
there exists a $\tau > 0$ such that $\|x-y\|_Z \geq 1 \implies \|f(x)-f(y)\|_2 \geq \tau$.
By rescaling, we may assume that $f$ is $\tau$-thresholding.

Then by a simple metric convexity argument,
we have
$$
\sup_{\stackrel{x,y \in Z}{\|x-y\|_Z \geq \e \tau}} \frac{\e\|f(x)-f(y)\|_2}{\|x-y\|_Z} \leq \sup_{\stackrel{x,y \in Z}{\e\tau/2 \leq \|x-y\|_Z \leq \e \tau}} \frac{2\|f(x)-f(y)\|_2}{\tau}\,.
$$
The latter quantity goes to 0 as $\e \to 0$, since $f$ is uniformly continuous.  Hence $\Lambda_{\tau}(Z,\e) \to 0$.
Thus by homogeneity, $Z$ is $\Lambda$-nontrivial.
\end{proof}

\subsection*{Acknowledgements}

We thank Assaf Naor for preliminary discussions about threshold-embeddings and Kwapien's theorem
in 2005, and valuable comments on early drafts of this manuscript.
We are also grateful to an anonymous referee for relaying
Osekowski's simplified proof of  Lemma \ref{cor-integrate-tail}.

\bibliographystyle{alpha}
\bibliography{mt}

\end{document}